\documentclass[a4paper, 11pt]{amsart}
\usepackage{enumerate, amsfonts, amssymb, amsthm, mathrsfs, mathabx, booktabs}

\usepackage{tikz,tikz-cd}
\usetikzlibrary{shapes,decorations.pathmorphing,calc,arrows,snakes}

\usepackage[colorlinks=true]{hyperref}
\numberwithin{equation}{section}
\usepackage{cleveref}

\newcommand{\Dfn}[1]{\emph{\color{blue}#1}} 
\newcommand{\FindStat}[1]{\url{www.findstat.org/#1}}
\newcommand{\OEIS}[1]{\url{www.oeis.org/#1}}

\numberwithin{equation}{section}
\theoremstyle{plain}
\newtheorem*{question*}{Question}
\newtheorem{lemma}[equation]{Lemma}
\newtheorem{theorem}[equation]{Theorem}

\newtheorem{corollary}[equation]{Corollary}
\newtheorem{proposition}[equation]{Proposition}

\theoremstyle{definition}

\newtheorem{remark}[equation]{Remark}
\newtheorem{example}[equation]{Example}

\crefname{proposition}{Prop.}{Props.}
\crefname{corollary}{Cor.}{Cors.}
\crefname{theorem}{Thm.}{Thms.}
\crefname{equation}{Eq.}{Eqs.}

\Crefname{proposition}{Proposition}{Propositions}
\Crefname{theorem}{Theorem}{Theorems}
\Crefname{corollary}{Corollary}{Corollaries}
\Crefname{equation}{Equation}{Equations}

\DeclareMathOperator{\Ext}{Ext}%
\DeclareMathOperator{\pdim}{pdim}%
\DeclareMathOperator{\gldim}{gldim}%
\DeclareMathOperator{\height}{height}

\newcommand{\NN}{\mathbb{N}}

\newcommand{\cartan}{\mathbf{C}}
\newcommand{\dist}{\operatorname{dist}}
\newcommand{\depth}{\operatorname{depth}}

\title[On the global dimension of Nakayama algebras]{On the global dimension of Nakayama algebras}

\date{\today}

\author[V.~Klász]{Viktória Klász$^\star$}
\address[Klász]{Mathematical Institute of the University of Bonn, Germany}
\email{klasz@math.uni-bonn.de}
\thanks{$^\star$Supported by the Deutsche Forschungsgemeinschaft (DFG, German Research Foundation) under Germany's Excellence Strategy grant EXC-2047/1-390685813.}

\author[R.~Marczinzik]{René Marczinzik}
\address[Marczinzik]{Mathematical Institute of the University of Bonn, Germany}
\email{marczire@math.uni-bonn.de}

\author[A.~Mellit]{Anton Mellit}
\address[Mellit]{Fakult\"at f\"ur Mathematik, Universität Wien, Austria}
\email{anton.mellit@univie.ac.at}

\author[M.~Rubey]{Martin Rubey}
\address[Rubey]{Fakult\"at f\"ur Mathematik und Geoinformation, TU Wien, Austria}
\email{martin.rubey@tuwien.ac.at}%

\author[C.~Stump]{Christian Stump}
\address[Stump]{Fakult\"at f\"ur Mathematik, Ruhr-Universit\"at Bochum, Germany}
\email{christian.stump@rub.de}

\hyphenation{Naka-yama LNaka-yama CNaka-yama Aus-lan-der Ku-pisch coKu-pisch semi-length}

\usepackage[colorinlistoftodos]{todonotes}
\presetkeys%
{todonotes}%
{backgroundcolor=yellow}{}

\begin{document}

\begin{abstract}
  We study the global dimension of Nakayama algebras.  In the case of
  linear Nakayama algebras, which are in canonical bijection to Dyck
  paths, we show that the global dimension has the same distribution
  as the height of Dyck paths.  For cyclic Nakayama algebras an
  explicit classification of finite global dimension is not known.
  However, we show that in certain special cases cyclic Nakayama
  algebras with finite global dimension can again be interpreted as
  Dyck paths.  In particular, we show that there is a natural
  bijection between sincere Nakayama algebras and Dyck paths.  In
  this case, we find that the global dimension is in fact twice the
  bounce count of the corresponding Dyck path.
\end{abstract}

\maketitle
\section{Introduction}
The projective dimension of a module measures its homological
complexity by approximating the module by projective modules, which
are usually well-understood. The \Dfn{global dimension} of a ring $A$
is defined as the supremum of the projective dimensions of all
$A$-modules and gives a global measure for the homological complexity
of the ring. Hilbert's celebrated syzygy theorem is one of the
earliest results in homological algebra.  It states that the
polynomial ring $K[x_1,\dots,x_n]$ in $n$ variables has global
dimension $n$, see for example \cite[Chapter~4.3]{W}.  In one variable, this
theorem says that for every $K[x]$-module $M$ there exists a
projective resolution of the form
$0 \rightarrow P_1 \rightarrow P_0 \rightarrow M \rightarrow 0$ and
this fact can be used to obtain several important structural results
about $K[x]$-modules, which correspond to central results of linear
algebra like the Jordan and canonical rational forms, see
\cite[Chapter~4]{AW}.

The goal of this article is to study the global dimension of Nakayama algebras, also known as serial algebras, see~\cite{AnFul}.
Nakayama algebras form one of the most fundamental classes of finite-dimensional algebras over a field $K$ and are treated in most textbooks on the representation theory of algebras, see for example \cite{AnFul,ARS,ASS,SkoYam}.
A celebrated theorem in modular representation theory states that every representation-finite block of a group algebra is stable equivalent to a Nakayama algebra, see \cite[Chapter~X]{ARS}.
The stable equivalences among Nakayama algebras were classified by Reiten ~\cite{R1,R2}.  As a further example for their relevance, Nakayama algebras were used to answer a question of Gabriel about simply connected algebras~\cite{RoSm}.
 There are still many important open questions about Nakayama algebras, many related to their derived categories, appearing in various contexts such as singularity categories in algebraic geometry, see~\cite{KLM,FOS}.

We work over an algebraically closed field.  In this case every finite dimensional algebra is Morita equivalent to an algebra given by a quiver and relations, see \cite[Proposition~3.7]{ASS}. There are two different types of Nakayama algebras: linear Nakayama algebras and cyclic Nakayama algebras.

Linear Nakayama algebras have many equivalent descriptions, the most elementary is perhaps as an algebra of upper triangular matrices over a field~$K$.
Let $T_n(K)$ be the $K$-algebra of upper triangular $n \times n$-matrices over a field~$K$ and let $J$ be the Jacobson radical of $T_n(K)$, that is, the ideal of all strictly upper triangular matrices.
A \Dfn{(connected) linear Nakayama algebra} is an algebra isomorphic to $T_n(K)/I$ for an ideal~$I \subseteq J^2$.
The $K$-algebra $T_n(K)$ is isomorphic to the path algebra $KQ$ of the following linear quiver $Q$ with $n$ vertices:
\[
\begin{tikzpicture}[scale=0.7]
  \foreach \pos\lab in {0/0, 2/1, 4/, 6/, 8/n-2, 10/n-1}{
    \coordinate (A\pos) at (\pos,0);
    \draw[fill=black] (A\pos) circle (.08);
    \node[anchor=north] at ($(A\pos)-(0,0.2)$) {$\lab$};
  }
  \foreach \sou\tar in {0/2, 2/4, 6/8, 8/10}{
    \draw[->,shorten <=7pt, shorten >=7pt] (A\sou) -- (A\tar);
  }
  \node at (5,0) {$\cdots$};
\end{tikzpicture}
\]
Thus, linear Nakayama algebras may be described as quiver algebras $KQ/I$ for an admissible ideal~$I$.
A \Dfn{(connected) cyclic Nakayama algebra} is a $K$-algebra isomorphic to a quiver algebra $KQ/I,$ of the following cyclic quiver~$Q$ with~$n$ vertices:
\[
\begin{tikzpicture}[scale=0.7]
  \foreach \pos\lab in {0/0, 2/1, 4/, 6/, 8/n-2, 10/n-1}{
    \coordinate (A\pos) at (\pos,0);
    \draw[fill=black] (A\pos) circle (.08);
    \node[anchor=north] at ($(A\pos)-(0,0.2)$) {$\lab$};
  }
  \foreach \sou\tar in {0/2, 2/4, 6/8, 8/10}{
    \draw[->,shorten <=7pt, shorten >=7pt] (A\sou) -- (A\tar);
  }
  \node at (5,0) {$\cdots$};
  \draw[->,shorten <=7pt, shorten >=7pt] (A10.north) to[out=150,in=30] (A0.north);
\end{tikzpicture}
\]
and where~$I$ is an admissible ideal of $KQ$. The simplest example of a cyclic Nakayama algebra is the $K$-algebra $K[x]/(x^n)$ for $n \geq 2$.
We refer to \Cref{sec:prelim} for more details on Nakayama algebras.

In \cite{MRS18}, we found strong connections between homological properties of Nakayama algebras and combinatorial properties of Dyck paths. In particular, linear Nakayama algebras with~$n$ simple modules are in canonical bijection with Dyck paths of semilength~$n-1$, starting at $(0,0)$ and ending at $(2n-2,0)$.
Further homological properties of Nakayama algebras have been related to Dyck path combinatorics in~\cite{RS,CM,KMM}.
In this article, we explore the following question.
\begin{question*}
  Can we describe properties of the global dimension of Nakayama algebras in terms of Dyck paths combinatorics?
\end{question*}

Note that all linear Nakayama algebras have finite global dimension, while cyclic Nakayama algebras may have infinite global dimension.
Moreover, there are only finitely many cyclic Nakayama algebras with a given number of simple modules with finite global dimension, by results of Gustafson~\cite{Gus}.
Recently, several new characterisations when a cyclic Nakayama algebra has finite global dimension have been discovered. We mention three important results:
\begin{enumerate}
\item A linear algebra characterisation: A Nakayama algebra has finite global dimension if and only if it has Cartan determinant~$1$~\cite{BFVZ}.
\item A combinatorial characterisation: A Nakayama algebra has finite global dimension if and only if its resolution quiver has exactly one component and that component has weight~$1$~\cite{S}.
\item A topological characterisation: A Nakayama algebra has finite global dimension if and only if its relation complex is contractible~\cite{HI}.

\end{enumerate}
Those results are all about the finiteness of the global dimension. In this article, we are interested in the combinatorial properties of the global dimension in case it is finite.
The first main result of this article is the following connection between the global dimension of linear Nakayama algebras and the height of Dyck paths:
\begin{theorem}\label{thm:equidistributed}
  The global dimension of connected linear Nakayama algebras is
  equidistributed with the height of Dyck paths.  In symbols,
  \[
    \sum_{A} q^{\gldim(A)} = \sum_{D} q^{\height(D)}
  \]
  where the first sum ranges over all connected linear Nakayama
  algebras with~$n$ simple modules and the second sum ranges over all
  Dyck paths of semilength~$n-1$.
\end{theorem}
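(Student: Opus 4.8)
The plan is to transport the whole statement to the combinatorial side via the bijection between connected linear Nakayama algebras with $n$ simples and Dyck paths of semilength $n-1$, and then to prove the equidistribution by showing that the \emph{cumulative} distributions of the two statistics satisfy one and the same recursion. Write $A=T_n(K)/I$ with Kupisch series $(c_0,\dots,c_{n-1})$, where $c_i=\dim P_i$ is the length of the indecomposable projective with top $S_i$; connectedness forces $c_{n-1}=1$ and $c_i\ge 2$ for $i<n-1$, while admissibility gives $c_i\le c_{i+1}+1$. Classically, the height statistic obeys, via the first-return (arch) decomposition of Dyck paths, the continued-fraction recursion
\[
  \mathcal{H}_{\le h}(x)=\frac{1}{1-x\,\mathcal{H}_{\le h-1}(x)},\qquad \mathcal{H}_{\le 0}(x)=1,
\]
where $\mathcal{H}_{\le h}(x)=\sum_{m}\#\{D:\text{semilength }m,\ \height(D)\le h\}\,x^{m}$. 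Writing $\mathcal{G}_{\le h}(x)=\sum_{n}\#\{A:\,n\text{ simples},\ \gldim(A)\le h\}\,x^{\,n-1}$, it suffices to prove that $\mathcal{G}_{\le h}$ satisfies the identical recursion with $\mathcal{G}_{\le 0}=1$; then induction on $h$ yields $\mathcal{G}_{\le h}=\mathcal{H}_{\le h}$ for all $h$, and taking the differences $\mathcal{G}_{\le h}-\mathcal{G}_{\le h-1}$ gives the theorem. The base case $h=0$ is the observation that a connected semisimple algebra has $n=1$, matching the empty path.

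First I would make $\gldim$ into an explicit statistic. Since all indecomposable modules over a Nakayama algebra are uniserial and a submodule of a uniserial module is uniserial, every syzygy of an indecomposable is again indecomposable; hence the minimal projective resolution of each simple $S_i$ is \emph{thin}, with each term a single indecomposable projective $P_{j_0=i},P_{j_1},P_{j_2},\dots$. As $\gldim A<\infty$, the classes $[P_j]$ form a $\mathbb{Z}$-basis of $K_0(A)$ and $[S_i]=\sum_{\ell}(-1)^{\ell}[P_{j_\ell}]$ is exactly the $i$-th row of $\cartan^{-1}$. Using the admissibility inequalities one checks that the tops of the successive syzygies $\Omega^{\ell}S_i$ have strictly increasing index, so the projectives $P_{j_\ell}$ are pairwise distinct. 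Consequently
\[
  \gldim A \;=\; \max_{0\le i\le n-1}\#\{\,j:(\cartan^{-1})_{ij}\neq 0\,\}\;-\;1,
\]
that is, the global dimension is one less than the maximal number of nonzero entries in a row of the inverse of the staircase $0/1$ Cartan matrix. This is the combinatorial handle I would carry through the rest of the argument.

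With this description, the crux is to establish the recursion $\mathcal{G}_{\le h}=1/(1-x\,\mathcal{G}_{\le h-1})$, equivalently $\mathcal{G}_{\le h}=\sum_{k\ge 0}x^{k}\mathcal{G}_{\le h-1}^{\,k}$, combinatorially. I would show that an algebra with $\gldim\le h$ and $n$ simples decomposes canonically into an ordered sequence of $k$ \emph{blocks}, each a connected linear Nakayama algebra of global dimension $\le h-1$ with $n_1,\dots,n_k$ simples, subject to $n=1+\sum_j n_j$, and that this assignment is a bijection. Concretely, the cut points should be read off the Kupisch series from the places where the maximal row-length of $\cartan^{-1}$ attains its running extreme; this can be organised as a transfer matrix / finite automaton scanning $(c_0,\dots,c_{n-1})$ that tracks just enough of the current syzygy-depth profile to certify $\gldim\le h$, whose generating function is then forced into the stated continued fraction.

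The main obstacle is precisely this decomposition: one must show that the homological condition $\gldim\le h$ is compatible with cutting the Kupisch series, i.e.\ that entering a block lowers the maximal row-length of $\cartan^{-1}$ by exactly one and that distinct blocks do not interact. Since the minimal resolution of a simple near a cut point can a priori reach across it, the technical heart is a lemma controlling how $\pdim S_i$ changes when the first block is removed, with the interaction bounded by the inequalities $c_i\le c_{i+1}+1$. I note that the relevant decomposition is \emph{not} expected to be the naive arch decomposition of the path produced by the canonical bijection — were it so, one would obtain the pointwise equality $\gldim(A_D)=\height(D)$ rather than a mere equidistribution. Accordingly I anticipate proving the functional equation directly rather than exhibiting a statistic-preserving bijection; constructing such an explicit bijection, if one exists, would be an attractive alternative route and a natural follow-up question.
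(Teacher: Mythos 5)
Your reduction of the theorem to the functional equation $\mathcal G_{\le h}=1/(1-x\,\mathcal G_{\le h-1})$ is sound, and your reformulation of $\gldim$ via the number of nonzero entries in the rows of $\cartan^{-1}$ is correct (the syzygy indices of a simple module are pairwise distinct because they trace out two internally disjoint paths in a tree, so no cancellation occurs in the alternating sum; your stated reason, that the indices are ``strictly increasing,'' is not quite right, since $s_2=s_3$ can occur formally when $c_{i+1}=c_i-1$, but in that case the resolution has already terminated). The genuine gap is the step you yourself flag as the ``technical heart'': the canonical decomposition of a connected linear Nakayama algebra with $\gldim\le h$ into an ordered sequence of blocks each of global dimension $\le h-1$. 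You do not construct this decomposition, and the hints you give (``cut where the maximal row-length of $\cartan^{-1}$ attains its running extreme,'' ``transfer matrix forced into the stated continued fraction'') do not amount to an argument. Worse, there is a structural reason to doubt that a one-step cut of the Kupisch series exists in the form you describe: the global dimension equals $\max_i \dist_T(i,i+1)-1$ for an associated rooted tree $T$, so it is governed by tree \emph{distances}, i.e.\ roughly twice a depth. The natural recursive operation (peeling off one level of the tree) therefore decreases $g$ by $2$, not by $1$, and the condition $\gldim(A)\le g$ is an asymmetric condition on pairs of sibling subtrees, one of depth $\ge\lfloor g/2\rfloor$ and one of depth $\ge\lceil g/2\rceil$. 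A decomposition that steps $h\mapsto h-1$ must somehow break this parity symmetry, and nothing in your sketch addresses that.

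For comparison, the paper does not prove your one-step recursion on the algebra side at all. It works with possibly \emph{disconnected} linear Nakayama algebras, encodes them as naturally labeled rooted trees via $i\mapsto i+c_i$, proves $\pdim(S_i)=\dist_T(i,i+1)-1$, characterizes $\gldim\le g$ by the sibling-subtree condition above, and then decomposes such trees into a quadruple $(m,L,R,M)$ consisting of a spine together with left forests of depth $\le\lfloor g/2\rfloor$ and right forests of depth $\le\lceil g/2\rceil$; an exactly parallel decomposition of Dyck paths cut at height $\lceil g/2\rceil$ yields a bijection at the disconnected level. Only afterwards does it pass to connected algebras and prime Dyck paths via the generating-function identity $F=1/(1-F^c)$ on both sides --- which is where your continued fraction implicitly reappears. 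So your target identity is correct and your route is genuinely different in conception, but as written it is missing its central construction, and the known proof suggests that construction is substantially harder than a single scan of the Kupisch series.
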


It is an open problem to give a precise enumeration of cyclic
Nakayama algebras of finite global dimension and a fixed number of
simple modules.  Our second main result is an indication that the
study of cyclic Nakayama algebras of finite global dimension leads to
beautiful combinatorics.

We focus on \Dfn{sincere} Nakayama algebras~$A$, which are those Nakayama algebras with every non-zero projective module being sincere.  This means that for
every projective $A$-module~$M$, every simple $A$-module is a
composition factor of~$M$, see \cite[Chapter~IX]{ARS} for more on sincere modules.  Note that
a sincere Nakayama algebra must be cyclic.  It turns
out that all sincere Nakayama algebras of finite global dimension
have magnitude~$1$ in the sense of Chuang, King and
Leinster~\cite{CKL}.  We will see in \Cref{gldim_formula} that they
are enumerated by the Catalan numbers via a direct bijection to Dyck
paths.  Our second result is an explicit formula for their global
dimension in terms of the bounce count on Dyck paths.  We refer to
later sections for the explicit definitions.

\begin{theorem}
Let $A$ be a sincere Nakayama algebra with $n$ simple modules.
Then~$A$ has finite global dimension if and only if it has a unique indecomposable projective module of dimension~$n$.
In this case, the global dimension is given by twice the bounce count of the corresponding Dyck path.
\end{theorem}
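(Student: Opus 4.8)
The plan is to translate everything into the combinatorics of the Kupisch series $(c_0,\dots,c_{n-1})$ of the cyclic quiver, where $c_i=\dim_K P_i$ is the length of the $i$-th indecomposable projective and admissibility reads $c_{i+1}\ge c_i-1$ (indices mod $n$). A Nakayama algebra is sincere exactly when every projective contains all simples as composition factors, i.e.\ when $c_i\ge n$ for all $i$; writing $c_i=n+a_i$ this means $a_i\ge 0$, and an indecomposable projective of dimension $n$ corresponds to a vanishing entry $a_i=0$. Thus the first assertion becomes: the sincere algebra has finite global dimension iff exactly one $a_i$ vanishes. The engine for both parts is the syzygy of an indecomposable module: if $M$ has top $S_t$ and length $\ell<c_t$ then $\Omega M$ has top $S_{t+\ell}$ and length $c_t-\ell$, so on the level of (top, length) the \emph{double} syzygy acts by $(t,\ell)\mapsto (f(t),\,\ell+a_{t+\ell}-a_t)$, where $f(t)=t+c_t\bmod n=t+a_t\bmod n$ is the resolution-quiver map.

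For the first assertion I would use the resolution-quiver characterisation of finiteness from \cite{S}. Since $c_i=n+a_i$, the weight of any cycle of $f$ of length $k$ equals $k+\tfrac1n\sum a$, so it is $\ge 1$, with equality iff the cycle is a single fixed point $i_0$ with $a_{i_0}=0$. Hence finite global dimension forces exactly one index $i_0$ with $a_{i_0}=0$ (two such would give two components, none would leave only weight-$\ge 2$ cycles). Conversely, assuming a unique zero $a_{i_0}=0$, I would prove the resolution quiver is connected with that single weight-one loop via a monotonicity lemma: measuring distance $d$ from $i_0$, admissibility gives $a_{i_0+d}\le n-d$ for $1\le d\le n-1$, so $f$ sends distance $d$ to $d+a_{i_0+d}\in\{d+1,\dots,n\}$, i.e.\ the distance strictly increases until it reaches $n\equiv 0=i_0$. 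Therefore every vertex flows to $i_0$ and $i_0$ is the only cycle, giving finite global dimension.

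For the formula I would first compute $\pdim S_{i_0}$. Modules with top $i_0$ stay at top $i_0$ under the double syzygy and their length evolves by $\ell\mapsto \ell+a_{i_0+\ell}$; starting from $\ell=1$ this strictly increases, the intermediate odd syzygies are never projective, and it stops upon reaching $c_{i_0}=n$, where we hit $P_{i_0}$. Writing $1=\ell_0<\ell_1<\dots<\ell_m=n$ for this sequence gives $\pdim S_{i_0}=2m$, even, with $m$ the number of increment steps. For every other simple I would pass to the universal cover $\mathbb{Z}\to\mathbb{Z}/n$, where the tops form a strictly increasing sequence whose even- and odd-indexed subsequences are both orbits of $u\mapsto u+c_{u\bmod n}$, started one apart; computing the two return levels shows that for $j\ne i_0$ the resolution always terminates at an \emph{odd} step $2\mu(j)-1$, with $\mu(j)$ the first time the $f$-orbits of $j$ and $j+1$ merge. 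Because admissibility makes $d\mapsto d+a_{i_0+d}$ weakly increasing on distances, the orbit of any vertex reaches $i_0$ no later than that of $i_0+1$, so all $f$-depths are $\le m$ and hence $\mu(j)\le m$. This yields $\pdim S_j\le 2m-1<2m=\pdim S_{i_0}$, so $\gldim A=2m$.

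Finally I would identify $m$ with the bounce count: under the bijection of \Cref{gldim_formula} the increments $a_{i_0+\ell}$ encode the Dyck path, and the recursion $\ell_{k+1}=\ell_k+a_{i_0+\ell_k}$ reading off $\ell_0,\dots,\ell_m$ is precisely the bounce construction, so $m$ equals the number of bounces. The main obstacle is the upper bound on the global dimension, namely proving that the fixed-point simple $S_{i_0}$ attains the maximum; this is exactly where the even/odd dichotomy on the universal cover and the monotonicity of the distance map do the work, whereas matching the bounce-path conventions with the length recursion is routine once \Cref{gldim_formula} is available.
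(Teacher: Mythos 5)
Your handling of the finiteness criterion and of $\pdim(S_{i_0})$ is essentially correct, and in the first part you take a more self-contained route than the paper: the paper derives the equivalence from \Cref{magnitudetheorem} (via magnitude, Gustafson's bound and a lemma from [MMZ]), whereas you verify directly that a unique index with $c_{i_0}=n$ forces the resolution quiver to be a single component with a weight-one loop, using the monotonicity of $d\mapsto d+a_{i_0+d}$; both arguments work. Your computation of the resolution of $S_{i_0}$ by iterating the double syzygy $\ell\mapsto\ell+a_{i_0+\ell}$, and the identification of the resulting sequence $\ell_0<\dots<\ell_m=n$ with the bounce path, is exactly the paper's induction establishing $\pdim(S_{i_0})=2b_D$.

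The gap is in the final step, namely showing that $S_{i_0}$ actually attains the global dimension. You assert that for $j\neq i_0$ ``computing the two return levels shows that the resolution always terminates at an \emph{odd} step,'' but no argument is given, and this is precisely the hard part. On the universal cover the tops interleave as $u_0\le v_0\le u_1\le v_1\le\cdots$ with $u_k=F^k(j)$ and $v_k=F^k(j+1)$, and the resolution terminates either at the first $k$ with $u_k=v_k$ (odd projective dimension $2k-1$) or at the first $k$ with $v_k=u_{k+1}$ (even projective dimension $2k$); once both orbits have entered the fibre over $i_0$ -- which happens by step $m$ -- the interleaving forces $v_k-u_k\in\{0,n\}$, so the dangerous case is exactly an even termination $\pdim S_j=2m$ at the last possible moment, and nothing in your sketch rules it out. (Relatedly, your inference ``all $f$-depths are $\le m$, hence $\mu(j)\le m$'' conflates merging of orbits modulo $n$ with merging in the universal cover; the latter is what controls projectivity of the syzygy and depends on the winding levels, not only on the depths.) The paper does not attempt this combinatorially: it invokes Shen's result that the simples of even projective dimension are exactly the vertices on the cycle of the resolution quiver (\Cref{Dshentheorems}~(3)), so $S_{i_0}$ is the unique such simple, and then uses Ringel's parity theorem (\Cref{Ringelproposition}) applied to the indecomposable injectives -- which are sincere and hence contain $S_{i_0}$ as a composition factor -- to conclude that $\gldim A$ is even, whence by \Cref{Auslanderthm} it equals $\pdim S_{i_0}$. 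You need either these citations or a genuine proof of the parity claim before the inequality $\pdim S_j<2m$ for $j\neq i_0$ is established.
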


\section{Preliminaries on Nakayama algebras and Dyck paths}\label{sec:prelim}

\subsection{Algebraic properties of Nakayama algebras}\label{sec:algprops}

In this section, we briefly summarise the most important definitions and properties of Nakayama algebras. We refer for example to \cite{AnFul,ARS,ASS,SkoYam} for textbook introductions to the representation theory of Nakayama algebras and to \cite{MRS18} for the combinatorial aspects of Nakayama algebras and especially the correspondence of linear Nakayama algebras and Dyck paths via their Auslander-Reiten quiver. We assume that all algebras are finite-dimensional $K$-algebras with $K$ an algebraically closed field and $A$-modules are finitely generated right modules unless stated otherwise.

By definition, a \Dfn{Nakayama algebra} $A$ is a finite-dimensional algebra over a field $K$ such that every indecomposable module is uniserial, meaning it has a unique composition series.  The specific field $K$ does not matter for the homological dimensions that we study in this article, so we will often neglect mentioning $K$.

As explained in the introduction, all the homological notions we
study are invariant under Morita equivalence.  Therefore, we will
assume that Nakayama algebras are given as quiver algebras of the
form $KQ/I$.  Furthermore, we can assume that our algebras $A$ are
\Dfn{connected}, meaning that we do not have $A \cong A_1 \times A_2$
for two nontrivial algebras $A_1$ and $A_2$.  A quiver algebras is
connected if its underlying quiver is connected.

A Nakayama algebra is determined by its \Dfn{Kupisch series} $[c_0,\dots,c_{n-1}]$, where $c_i:=\dim(e_i A)$ is the $K$-dimension of the $i$-th indecomposable projective $A$-module and $n$ is the number of simple $A$-modules. We extend the definition of $c_i$ to all $i \in \mathbb{Z}$ by setting $c_i =c_j$ whenever $i \equiv j$ modulo $n$.
Kupisch series of (connected) linear Nakayama algebras are characterised by
\begin{equation}
\label{lnak_eq}
  c_{i+1} + 1 \geq c_i \geq 2 \text{ for $0\leq i < n-1$}, \text{ and } c_{n-1} = 1.
\end{equation}
Kupisch series of (connected) cyclic Nakayama algebras are characterised by
\begin{equation}
\label{cnak_eq}
    c_{i+1}+1\geq c_i \geq 2 \text{ for every }i.
\end{equation}
The \Dfn{Loewy length} of $A$ is the maximum of the integers $c_i$.
The indecomposable $A$-modules are all of the form $e_i A/e_i J^k$ for some $i\in\{0,\dots,n-1\}$ and $k \in \{1,\dots,c_i \}$.
The modules $e_i A = e_i A / e_i J^{c_i}$ are the indecomposable projective $A$-modules and the modules $S_i = e_i A/ e_i J^1$ are the simple $A$-modules.

Setting $d_i := \min \{ \ k \ | \ k \geq c_{i-k} \ \}$, the sequence $[d_0,\dots,d_{n-1}]$ is the \Dfn{coKupisch series} of $A$ and the indecomposable injective $A$-modules are given by $D(Ae_i)= e_{i+1-d_i} A/e_{i+1-d_i} J^{d_i}$. Note that the coKupisch series is exactly the Kupisch series of the opposite algebra of the Nakayama algebra $A$ and $d_i$ is equal to the vector space dimension of the indecomposable injective module $D(Ae_i)$, see \cite[Theorem 2.2]{Ful}.

The \Dfn{projective dimension} of a module $M$ with minimal projective resolution
\[
\cdots\rightarrow P_i \rightarrow P_{i-1} \rightarrow \cdots\rightarrow P_1 \rightarrow P_0 \rightarrow M \rightarrow 0
\]
is defined as $\pdim M:= \sup \{\ i \geq 0 \mid P_i \neq 0 \}$. The \Dfn{global dimension} of an algebra $A$ is defined as $\gldim A := \sup \{ \ \pdim M \mid M \in $ mod$A\}$, where mod$A$ denotes the module category of $A$ consisting of finitely generated $A$-modules.
We define the \Dfn{$n$-th syzygy} of a module $M=e_i A/e_i J^k$ over a Nakayama algebra inductively as $\Omega^0(M)=M$, $\Omega^1(e_i A/ e_i J^k)=e_i J^k= e_{i+k} A/ e_{i+k} J^{c_i -k}$ and for $\ell \geq 2$ as $\Omega^\ell(M)=\Omega^1(\Omega^{\ell-1}(M))$.
The projective dimension of $M$ is equal to the smallest non-negative integer $\ell$ such that $\Omega^\ell(M)$ is projective, or infinite if no such $\ell$ exists. In the finite case, $\ell$ is the smallest non-negative integer such that  $\Omega^{\ell+1}(M)=0.$
This gives a purely combinatorial way to calculate projective dimensions of modules over Nakayama algebras.

Note that linear Nakayama algebras are characterised among Nakayama algebras as those having a simple projective module, which corresponds to the sink vertex of the quiver, i.e., the vertex with no outgoing arrows.
The following is a fundamental result due to Auslander on the global dimension of finite-dimensional algebras, see \cite[Proposition~5.1]{ARS}:
\begin{theorem} \label{Auslanderthm}
Let $A$ be a finite-dimensional algebra.
Then
\[
  \gldim(A) = \sup \{ \ \pdim(S_i) \mid S_i \text{ simple}\ \}\,.
\]
\end{theorem}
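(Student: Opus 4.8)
The plan is to translate the statement into the combinatorics of the Kupisch series $[c_0,\dots,c_{n-1}]$, periodically extended by $c_{i+n}=c_i$. First I would record the meaning of the hypotheses. The composition factors of $e_iA$ are $S_i,S_{i+1},\dots,S_{i+c_i-1}$, so $e_iA$ is sincere precisely when $c_i\ge n$; hence $A$ is sincere iff $c_i\ge n$ for all $i$, and then \eqref{lnak_eq} forces $A$ to be cyclic. Since $\dim e_iA=c_i$, an indecomposable projective has dimension $n$ iff $c_i=n$, so the condition ``unique indecomposable projective of dimension~$n$'' means that exactly one $c_i$ equals $n$. Writing $c_i=n+r_i$ with $r_i\ge0$, the cyclic inequality \eqref{cnak_eq} becomes $r_{i+1}\ge r_i-1$, so $(r_i)$ drops by at most one at each step.

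For the equivalence I would invoke the resolution quiver characterisation of~\cite{S}: $A$ has finite global dimension iff its resolution quiver is connected and the weight of its cycle is~$1$. This quiver is the functional graph of $f(i)=i+c_i\bmod n$, with the weight of a cycle $C$ equal to $\tfrac1n\sum_{i\in C}c_i$. As every $c_i\ge n$, any cycle of length $\ge2$ has weight $\ge2$, and a loop can occur only where $c_i\equiv0\pmod n$; hence a weight-one component must be a loop at a vertex with $c_i=n$. This settles one direction: finiteness forces connectedness, so there is a single cycle, which must be such a loop, and a second vertex with $c_i=n$ would create a second loop and destroy connectedness; thus exactly one $c_i=n$. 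For the converse, assume exactly one $c_i=n$ and rotate so that $c_0=n$ (a legitimate isomorphism, and the hypothesis canonically singles out this vertex). The zeros of the extended sequence $(r_i)$ are then the multiples of $n$, and $r_{i+1}\ge r_i-1$ gives $r_i\le jn-i$ whenever $jn$ is the first multiple of $n$ at or after $i$. Thus the lift $g(i)=i+r_i$ of $f$ satisfies $i\le g(i)\le jn$, and each forward orbit strictly increases off the multiples of $n$ while staying below the next one, so it reaches it. Hence $f$ has the single fixed point $0$, the quiver is connected with one loop of weight~$1$, and $A$ has finite global dimension.

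For the value of the global dimension I would compute projective dimensions of simples and maximise, using \Cref{Auslanderthm}. Writing $(a,b)$ for $e_aA/e_aJ^b$ with $a\in\mathbb Z$ periodic and $1\le b\le c_a$, the syzygy formula reads $\Omega(a,b)=(a+b,\,c_a-b)$, and $(a,b)$ is projective iff $b=c_a$. Starting from $S_0=(0,1)$ a direct computation gives $\Omega^{2k}(S_0)=(kn,\beta_k)$ with $\beta_0=1$ and $\beta_{k+1}=\beta_k+r_{\beta_k}$, the resolution stopping exactly when $\beta_k=n$. The bound $r_{\beta_k}\le n-\beta_k$ keeps $1=\beta_0<\beta_1<\dots<\beta_b=n$ strictly increasing until equality, so $\pdim S_0=2b$. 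This process, which jumps from $\beta_k$ by $r_{\beta_k}$ until it reaches $n$, is exactly the bounce path of the Dyck path $D$ associated with $A$ under the bijection of \Cref{gldim_formula}, so that $b$ equals the bounce count of $D$ and $\pdim S_0=2b$.

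The remaining, and I expect hardest, step is to show that $S_0$ attains the maximal projective dimension. Set $\phi(a)=a+c_a$, whose reduction modulo $n$ is $f$; the Kupisch inequality makes $\phi$ non-decreasing, and the resolution of $S_i$ is controlled by the two interleaved $\phi$-orbits of $i$ and of $i+1$, terminating when these orbits coincide. From $\phi(kn)=(k+1)n$ one gets $\phi^j(0)=jn$ and $\phi^j(n)=(j+1)n$, while the computation above yields $\phi^b(1)=(b+1)n=\phi^b(n)$. Monotonicity of $\phi^b$ then squeezes $\phi^b(i)=\phi^b(i+1)=(b+1)n$ for every $1\le i\le n-1$, so for such $i$ the two orbits have already merged by step $b$ and $\pdim S_i\le 2b-1$. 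Combining with \Cref{Auslanderthm} gives $\gldim A=\max_i\pdim S_i=\pdim S_0=2b$, which is twice the bounce count of the corresponding Dyck path.
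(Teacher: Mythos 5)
Your proposal does not prove the statement in question. The statement is Auslander's theorem: for an \emph{arbitrary} finite-dimensional algebra $A$, the global dimension equals the supremum of the projective dimensions of the simple modules. What you have written is instead an argument for the classification and global-dimension formula for \emph{sincere Nakayama algebras} (the paper's \Cref{gldim_formula}): everything in your text --- the Kupisch series, the resolution quiver of $f(i)=i+c_i \bmod n$, the syzygy recursion $\Omega(a,b)=(a+b,c_a-b)$, the bounce count --- is specific to Nakayama algebras and says nothing about a general finite-dimensional algebra. Worse, as a purported proof of \Cref{Auslanderthm} it is circular: you invoke \Cref{Auslanderthm} itself twice (to reduce the computation of $\gldim A$ to the simple modules, and again in the final maximisation), so the statement to be proved is an ingredient of your argument rather than its conclusion. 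Note that the paper does not prove this theorem either; it quotes it from \cite[Proposition~5.1]{ARS}.

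For the record, a correct self-contained proof is short and entirely homological. The inequality $\gldim(A)\geq\sup_i\pdim(S_i)$ is immediate from the definition. For the converse, suppose every simple module has projective dimension at most $d$. Every finitely generated $A$-module $M$ has a finite composition series, and by induction on its length, using the long exact sequence in $\Ext$ associated to $0\to M'\to M\to S\to 0$ with $S$ simple, one gets $\Ext_A^{d+1}(M,-)=0$, hence $\pdim M\leq d$. Finally, for an Artinian ring the global dimension is attained on finitely generated modules (indeed $\gldim A=\pdim(A/J)$), which gives $\gldim(A)\leq d$. None of this machinery appears in your proposal; the Nakayama-specific computations you carry out, whatever their merits as an approach to \Cref{gldim_formula}, leave the actual claim untouched.
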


The following result by Gustafson first establishes that there are only finitely many Nakayama algebras with finite global dimension and a fixed number of simple modules~\cite{Gus}:
\begin{theorem} \label{gustafsonbound}
Let $A$ be a Nakayama algebra with $n$ simple modules.
If $A$ has finite global dimension then $A$ has Loewy length at most $2n-1$.
\end{theorem}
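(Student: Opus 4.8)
The plan is to prove the contrapositive, treating the easy linear case and the reduction to the connected case first. If $A$ is linear, then $c_{n-1}=1$ together with $c_i\le c_{i+1}+1$ from \eqref{lnak_eq} gives, by downward induction, $c_i\le n-i\le n\le 2n-1$, so the bound holds with no hypothesis on the global dimension. We may also assume $A$ is connected: for a product the global dimension is finite exactly when each factor has finite global dimension, and the connected case applied to a factor $A_i$ with $n_i\le n$ simple modules yields Loewy length at most $2n_i-1\le 2n-1$, whence the maximum over the factors is again at most $2n-1$. So from now on I assume $A$ is connected cyclic with Kupisch series $[c_0,\dots,c_{n-1}]$ satisfying \eqref{cnak_eq}, and I suppose for contradiction that its Loewy length $L=\max_i c_i$ satisfies $L\ge 2n$, aiming to deduce $\gldim(A)=\infty$.

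The first and main elementary step is to propagate the single large value around the whole Kupisch series. Say $c_{i_0}=L\ge 2n$. The cyclic inequality $c_{j+1}\ge c_j-1$ from \eqref{cnak_eq}, applied repeatedly, yields $c_{i_0+k}\ge L-k$ for all $k\ge 0$. Letting $k$ run through $0,1,\dots,n-1$, the indices $i_0+k$ exhaust all residues modulo $n$, and $L-k\ge 2n-(n-1)=n+1$. Hence every entry of the Kupisch series satisfies $c_j\ge n+1$; equivalently, every indecomposable projective module has dimension strictly greater than $n$.

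The second step turns this into a statement about the syzygies of the simple modules, packaged through the resolution quiver of Shen~\cite{S}. Recall that this quiver has vertex set $\{0,\dots,n-1\}$ and an arrow $i\to (i+c_i)\bmod n$, so that each connected component contains a unique oriented cycle; the weight of a cycle $i_0\to i_1\to\cdots\to i_{p-1}\to i_0$ is the positive integer $\tfrac1n\sum_{t=0}^{p-1}c_{i_t}$, counting how often the lift of the cycle to $\mathbb{Z}$ under $x\mapsto x+c_x$ wraps around $\mathbb{Z}/n\mathbb{Z}$. Using $c_{i_t}\ge n+1$ for each $t$, every such weight satisfies
\[
  \frac1n\sum_{t=0}^{p-1}c_{i_t}\;\ge\;\frac{p(n+1)}{n}\;>\;p\;\ge\;1,
\]
so no cycle of the resolution quiver has weight $1$. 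By the characterisation recalled in the introduction, that a Nakayama algebra has finite global dimension if and only if its resolution quiver has a single component of weight $1$~\cite{S}, it follows that $\gldim(A)=\infty$, contradicting the hypothesis. Therefore $L\le 2n-1$.

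I expect the only genuinely delicate point to be the bookkeeping in the second paragraph: one must check that the cyclic Kupisch inequality drives the maximal value all the way around the cycle to force \emph{every} $c_j\ge n+1$, not merely the entries in a neighbourhood of $i_0$. Once all projectives have dimension exceeding $n$, the weight estimate is immediate and the reduction via \cite{S} is formal. If one prefers to avoid the resolution quiver, the same conclusion can be reached directly from the syzygy formula $\Omega^1(e_iA/e_iJ^k)=e_{i+k}A/e_{i+k}J^{c_i-k}$: the tops of the successive syzygies of $S_i$ are governed by iterating $x\mapsto x+c_x$ on $\mathbb{Z}/n\mathbb{Z}$, and when every $c_j>n$ this iteration produces only cycles of weight exceeding one, forcing the syzygies of some simple module to be periodic and hence $\pdim(S_i)=\infty$.
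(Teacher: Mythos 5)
Your proof is correct. The paper itself gives no proof of \Cref{gustafsonbound}, citing Gustafson~\cite{Gus} instead, so your argument is necessarily a different route: you derive the bound from Shen's resolution-quiver criterion (recorded in the paper as \Cref{Dshentheorems}~(2)). The details check out: the reductions to the connected and cyclic cases are routine; the propagation $c_{i_0+k}\ge L-k$ from \eqref{cnak_eq} does drive the assumed value $L\ge 2n$ all the way around the cycle, since $k\le n-1$ already exhausts all residues and $L-(n-1)\ge n+1$; and once every $c_j\ge n+1$, each cycle of length $p$ in the resolution quiver has weight at least $p(n+1)/n>1$, so by \Cref{Dshentheorems}~(2) the global dimension is infinite. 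Note that your estimate is essentially the contrapositive of the step the paper itself takes inside the proof of \Cref{magnitudetheorem}, where it invokes \cite[Lemma 3.2.(1)]{MMZ} to get some $c_i\le n$ and then uses the same inequality $c_{i-k}\le c_i+k$; indeed, from ``some $c_i\le n$'' Gustafson's bound follows in one line as $c_{i-k}\le n+k\le 2n-1$. What your route buys is that the theorem becomes a short formal consequence of machinery the paper already states, making the exposition self-contained; what it costs is logical economy: Gustafson's original argument is elementary and predates Shen's criterion by three decades, whereas your derivation rests on the full strength of \cite{S} (this is not circular, since Shen's proof does not use the Loewy length bound, but it is a heavier input than the statement requires). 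Your closing sketch of a resolution-quiver-free variant via the syzygy formula is the right idea, though as written (``forcing the syzygies of some simple module to be periodic'') it would need the small additional observation that a weight $>1$ cycle prevents the syzygy sequence of any simple module whose orbit under $x\mapsto x+c_x$ enters the cycle from ever reaching a projective, which is exactly the content of Shen's argument.
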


The \Dfn{Cartan matrix} of a finite-dimensional quiver $K$-algebra $A$ with $n$ simple modules is the $n\times n$-matrix $\cartan_A=(c_{ij})$ with entries $c_{ij}=\dim_K e_i A e_j$, the vector space dimension of the space $e_i A e_j$ given by the number of non-zero paths in $A$ starting at $i$ and ending at $j$. The \Dfn{Cartan determinant} of $A$ is $\det(\cartan_A)$. It is well-known that in the case of finite global dimension, this is equal to $\pm 1$, see \cite[Proposition 21]{E}.
The Cartan determinant conjecture, a longstanding open problem, asserts that it is always positive, see for example the section on homological conjectures in \cite{ARS}.

The resolution quiver of a Nakayama algebra was introduced by Ringel to describe homological properties of Nakayama algebras in a graph theoretic way~\cite{Rin1}.

Let $A$ be a Nakayama algebra with Kupisch series $[c_0,c_1,\dots,c_{n-1}]$.
Its \Dfn{resolution quiver} has vertex set~$\mathbb{Z}/n \mathbb{Z}$ and an arrow from $i$ to $j$ if $j \equiv i+c_i$ modulo $n$.
Note that each component contains a unique cycle.
Thus, the number of components of the resolution quiver is the number of its cycles.
The \Dfn{weight} of a cycle in the resolution quiver is $\frac{1}{n}\sum\limits_{\ell=1}^{s}{c_{i_\ell}}$, where $i_1,\dots, i_s$ are the vertices in the cycle.

\medskip

We recall the following properties for later reference.

\begin{theorem} \label{Dshentheorems}
Let $A$ be a cyclic Nakayama algebra.
\begin{enumerate}
\item All cycles in the resolution quiver of $A$ are of the same size and weight.
\item $A$ has finite global dimension if and only if the resolution quiver of $A$ is connected and its unique cycle has weight $1$.
\item If $A$ has finite global dimension, then the number of vertices lying on the unique cycle in the resolution quiver of $A$ is the number of simple modules of even projective dimension of $A$.
\item The Cartan matrix of $A$ is invertible if and only if the resolution quiver of $A$ is connected. In this case, the Cartan determinant of $A$ equals the weight of any cycle of $A$.
\end{enumerate}
\end{theorem}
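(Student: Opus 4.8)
\emph{The characterisation of finite global dimension.} First I would record the elementary observation that a Nakayama algebra with Kupisch series $[c_0,\dots,c_{n-1}]$ is sincere exactly when $c_i\ge n$ for every $i$: the composition factors of the projective $e_iA$ are the $c_i$ consecutive simples $S_i,S_{i+1},\dots,S_{i+c_i-1}$, so all $n$ simples occur precisely when $c_i\ge n$ (in particular a sincere algebra is cyclic, since a linear one has $c_{n-1}=1$). Under this hypothesis an indecomposable projective of dimension $n$ is the same as a vertex $i$ with $c_i=n$, and such a vertex is a fixed point of the resolution quiver, i.e.\ a loop $i\to i$ whose cycle has weight $\tfrac1n c_i=1$. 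Now I apply \Cref{Dshentheorems}. If there is a unique $i$ with $c_i=n$, there is a weight-$1$ loop; by part~(1) every cycle then has size $1$ and weight $1$, hence is a loop at a vertex with $c=n$, so by uniqueness there is exactly one cycle and the resolution quiver is connected, whence $\gldim A<\infty$ by part~(2). Conversely, if $\gldim A<\infty$ then by part~(2) the resolution quiver is connected with a single cycle of weight $1$, so $\sum_{\text{cycle}}c_{i_\ell}=n$; since every $c_{i_\ell}\ge n$ this forces the cycle to be a single loop at a vertex $i_0$ with $c_{i_0}=n$, and any further vertex with $c_j=n$ would give a second loop, contradicting connectedness. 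This proves the first assertion.

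\emph{Set-up for the formula.} From now on $A$ has finite global dimension; by Gustafson's bound (\Cref{gustafsonbound}) we have $n\le c_i\le 2n-1$, so writing $a_i:=c_i-n\in\{0,\dots,n-1\}$ the cyclic Kupisch inequalities become $a_{i+1}\ge a_i-1$, and $i_0$ is the unique index with $a_{i_0}=0$. Rotating the vertices I may assume $i_0=0$. Iterating the Kupisch inequality $c_{j+1}\ge c_j-1$ up to the fixed point gives the key estimate $c_{i+k}\ge c_i-k$, equivalently $a_i+i\le n$ for all $i$, so $a_i\le n-i$. This lets me define the bijection to Dyck paths: I send $A$ to the path with area sequence $g_j:=a_{n-j}-1=c_{n-j}-n-1$ for $j=1,\dots,n-1$. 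The inequalities above give $g_1=0$, $0\le g_j\le j-1$ and $g_{j+1}\le g_j+1$, so $(g_1,\dots,g_{n-1})$ is a genuine area sequence; since area sequences correspond to Dyck paths of semilength $n-1$ and the conditions are reversible, this is a bijection, recovering the Catalan enumeration.

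\emph{Computing the global dimension.} I compute syzygies combinatorially via $\Omega^1(e_pA/e_pJ^d)=e_{p+d}A/e_{p+d}J^{c_p-d}$, writing a module as a pair $(p,d)$. A direct calculation gives $\Omega^2(0,d)=(0,d+a_d)$ for $1\le d<n$, the estimate $a_d+d\le n$ guaranteeing this is a legitimate module at vertex $0$. Hence the even syzygies of $S_0=(0,1)$ stay at vertex $0$ and their dimensions follow the strictly increasing walk
\[
  D_0=1,\qquad D_{k+1}=D_k+a_{D_k},
\]
which (since $a_d\ge1$ for $d\ne0$ and $D_{k+1}\le n$) reaches $n$, i.e.\ the projective $e_0A$, after exactly $L$ steps, so $\pdim S_0=2L$. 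It remains to see that $S_0$ attains the global dimension. Here I would prove the monotonicity lemma that the number $N(d)$ of steps of the walk started at $d$ is non-increasing in $d$: by downward induction, if $u=d+a_d$ and $v=(d+1)+a_{d+1}$ then $u\le v$ because $a_{d+1}\ge a_d-1$, whence $N(d)=1+N(u)\ge1+N(v)=N(d+1)$. Reducing the walk modulo $n$ identifies $N(d)$ with the distance $t_d$ from $d$ to $0$ in the resolution quiver, so $t_d\le t_1=L$. Finally, for $j\ne0$ part~(3) of \Cref{Dshentheorems} forces $\pdim S_j$ to be odd. The even syzygies of $S_j$ occupy the vertices $j,\phi(j),\phi^2(j),\dots$, where $\phi$ is the resolution quiver map $i\mapsto i+c_i$; should this sequence reach $0$ at an even syzygy, the projective dimension of $S_j$ would be even (the resolution then being governed by the vertex-$0$ walk), contradicting oddness. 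Thus the resolution of $S_j$ stops after fewer than $t_j$ double steps, so $\pdim S_j\le 2t_j-1\le 2L-1$, and $\gldim A=\pdim S_0=2L$ by Auslander's theorem (\Cref{Auslanderthm}).

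\emph{Matching $L$ with the bounce count.} The remaining point, and the main combinatorial obstacle, is to identify $L$ with the bounce count of the Dyck path $D(A)$. I expect to show that the bounce path touches the diagonal exactly at the abscissae $n-D_{L-i}$ for $i=0,\dots,L$; equivalently, its successive vertical runs have lengths $a_{D_{L-1}},\dots,a_{D_0}$, the reverse of the increments of the walk. This is verified by matching the recursion defining the bounce path against the walk $D_{k+1}=D_k+a_{D_k}$, using the explicit area sequence $g_j=a_{n-j}-1$; the delicate part is the bookkeeping between the two standard orientations, the walk running from the loop outward and the bounce path from the diagonal. Granting this, the bounce path makes exactly $L$ bounces, so $\gldim A=2L=2\,\mathrm{bounce}(D(A))$, as claimed.
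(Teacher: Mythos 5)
Your proposal does not prove the statement at issue. The statement is \Cref{Dshentheorems} itself: four structural facts about the resolution quiver of an arbitrary cyclic Nakayama algebra --- that all cycles have the same size and weight, that finite global dimension is equivalent to the resolution quiver being connected with its unique cycle of weight one, that the number of vertices on the cycle equals the number of simple modules of even projective dimension, and the criterion for invertibility of the Cartan matrix together with the identification of the Cartan determinant with the cycle weight. What you wrote is instead a proof of the paper's results on sincere Nakayama algebras (\Cref{magnitudetheorem} and \Cref{gldim_formula}), and at every crucial juncture it invokes the very statement to be proved as a known tool: ``Now I apply \Cref{Dshentheorems}'', ``by part~(1) every cycle then has size $1$'', ``by part~(2) the resolution quiver is connected'', ``part~(3) of \Cref{Dshentheorems} forces $\pdim S_j$ to be odd''. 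As a proof of \Cref{Dshentheorems} this is circular, and moreover incomplete even as a sketch: nothing in your text argues why cycles of the map $i\mapsto i+c_i$ on $\mathbb{Z}/n\mathbb{Z}$ all have equal size and weight for a general cyclic Nakayama algebra, why weight one plus connectedness characterises finite global dimension, or why the cycle length counts simples of even projective dimension; part~(4), about the rank and determinant of the Cartan matrix, is never mentioned at all.

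For the record, the paper does not reprove these facts either: parts (1)--(3) are quoted from Shen (\cite[Proposition~1.1]{S2}, \cite[Proposition~1.1]{S}, \cite[Lemma~3.1(2)]{S}), and part (4) is deduced from the rank formula $\operatorname{rank}(\cartan_A)=n+1-c$, with $c$ the number of cycles, together with \cite[Remark~4.2]{S} for the determinant--weight identity. So an acceptable answer here would have been either these citations or genuine proofs of Shen's results; your syzygy-walk analysis is carried out only in the sincere case ($c_i\geq n$, a unique $c_i=n$) and does not yield any of the four general assertions. The material you did develop is essentially the paper's proof of \Cref{gldim_formula} --- with one genuine variation worth noting in that context: where the paper shows $\gldim A=\pdim S_0$ via Ringel's \Cref{Ringelproposition} applied to the sincere indecomposable injectives, you propose a monotonicity lemma for the number of double-steps of the walk $d\mapsto d+a_d$ --- but that comparison belongs to a different theorem, not to \Cref{Dshentheorems}.
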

\begin{proof}
\begin{enumerate}
\item See \cite[Proposition~1.1]{S2}
\item See \cite[Proposition~1.1]{S}.
\item See \cite[Lemma~3.1(2)]{S}.
\item The rank of the Cartan matrix is $n+1-c$, where $c$ is the number of cycles of the resolution quiver. Since every component of the resolution quiver has a unique cycle, the number of cycles is the number of components. Thus, the Cartan matrix has full rank if and only if $c=1$, which means that the resolution quiver is connected. The fact that the Cartan determinant equals the weight in case it is non-zero is explained in \cite[Remark 4.2.]{S}.
\qedhere
\end{enumerate}
\end{proof}

\begin{proposition} \label{benpropo}
Let $A$ be a finite-dimensional algebra, $M$ an $A$-module with minimal projective resolution
\[
\dots\rightarrow P_k \rightarrow\dots\rightarrow P_1 \rightarrow P_0 \rightarrow M \rightarrow 0,
\]
and $S$ a simple $A$-module. Then $\dim\Ext_A^k(M,S)$ equals the number of indecomposable direct summands of $P_k$ that are isomorphic to the projective cover of $S$.
\end{proposition}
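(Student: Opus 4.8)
The plan is to compute $\Ext_A^k(M,S)$ directly from the given minimal projective resolution and to use minimality in an essential way. By definition $\Ext_A^k(M,S)$ is the $k$-th cohomology of the complex obtained by deleting $M$ and applying $\operatorname{Hom}_A(-,S)$, namely
\[
0 \to \operatorname{Hom}_A(P_0,S) \xrightarrow{d_1^\ast} \operatorname{Hom}_A(P_1,S) \xrightarrow{d_2^\ast} \operatorname{Hom}_A(P_2,S) \to \cdots,
\]
where $d_k^\ast(f) = f \circ d_k$ and $d_k \colon P_k \to P_{k-1}$ denotes the $k$-th differential. So the first thing I would record is that the whole statement is about the cohomology of this cochain complex of finite-dimensional vector spaces.

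The step on which everything rests, and the one I expect to be the only point requiring care, is the vanishing of all induced maps $d_k^\ast$. Minimality of the resolution means precisely that $d_k(P_k) \subseteq P_{k-1}J$, where $J$ is the Jacobson radical of~$A$. Since we work with right modules, any $f \in \operatorname{Hom}_A(P_{k-1},S)$ satisfies $f(P_{k-1}J) = f(P_{k-1})\,J \subseteq S J = 0$, because the radical annihilates the simple module~$S$. Hence $f \circ d_k = 0$, so $d_k^\ast = 0$ for every $k \geq 1$, and the differentials of the $\operatorname{Hom}$-complex are all zero. This collapses the cohomology to the groups themselves, giving a $K$-linear isomorphism $\Ext_A^k(M,S) \cong \operatorname{Hom}_A(P_k,S)$.

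It then remains to identify $\dim_K \operatorname{Hom}_A(P_k,S)$, which is a routine decomposition argument. Writing $P(S_i)$ for the projective cover of the simple module $S_i$ and decomposing $P_k \cong \bigoplus_i P(S_i)^{a_i}$, additivity of $\operatorname{Hom}$ reduces the computation to $\operatorname{Hom}_A(P(S_i),S)$. Any homomorphism $P(S_i)\to S$ kills $\operatorname{rad} P(S_i) = P(S_i)J$, since its image is semisimple, so it factors through the top $P(S_i)/\operatorname{rad} P(S_i) \cong S_i$; thus $\operatorname{Hom}_A(P(S_i),S) \cong \operatorname{Hom}_A(S_i,S)$. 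As $K$ is algebraically closed, Schur's lemma gives $\operatorname{End}_A(S) = K$, so this space is one-dimensional when $S_i \cong S$ and zero otherwise. Therefore $\dim_K \operatorname{Hom}_A(P_k,S)$ equals the multiplicity $a_j$ for which $S_j \cong S$, that is, the number of indecomposable summands of $P_k$ isomorphic to the projective cover of $S$, which is exactly the assertion. Combining this with the isomorphism from the previous paragraph completes the proof.
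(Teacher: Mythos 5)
Your proof is correct and complete; the paper itself gives no argument for this proposition, simply citing \cite[Corollary 2.5.4]{Ben}, and what you wrote is precisely the standard argument behind that reference: minimality gives $d_k(P_k)\subseteq P_{k-1}J$, and $SJ=0$ forces every induced map $d_k^\ast$ on the $\operatorname{Hom}_A(-,S)$ complex to vanish, so $\Ext_A^k(M,S)\cong\operatorname{Hom}_A(P_k,S)$, whose dimension you then compute by the usual projective-cover decomposition. The only hypothesis worth flagging is that $\dim_K\operatorname{Hom}_A(S_i,S)=1$ for $S_i\cong S$ requires $\operatorname{End}_A(S)=K$, which holds here by the paper's standing assumption that $K$ is algebraically closed --- and you correctly invoked exactly that.
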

\begin{proof}
See for example \cite[Corollary 2.5.4]{Ben}.
\end{proof}

In \cite{CKL}, the \Dfn{magnitude} of a finite-dimensional algebra
$A$ with finite global dimension is defined as the sum of all entries
of the inverse of the Cartan matrix of $A$.  We denote the magnitude
of $A$ by $m_A$.  The notion of magnitude originates in the study of
enriched categories.  The main result of \cite{CKL} relates it to the
Euler form of $A$.

\begin{example}
  \label{example-A-344321}
  Let $A$ be the linear Nakayama algebra with Kupisch series [3,4,4,3,2,1].
  Then $A=KQ/I$ with quiver
  \[
    Q = \begin{tikzcd}
      0 & 1 & 2 & 3 & 4 & 5
      \arrow["{\alpha_1}", from=1-1, to=1-2]
      \arrow["{\alpha_2}", from=1-2, to=1-3]
      \arrow["{\alpha_3}", from=1-3, to=1-4]
      \arrow["{\alpha_4}", from=1-4, to=1-5]
      \arrow["{\alpha_5}", from=1-5, to=1-6]
    \end{tikzcd},
  \]
  and relations
  $I = \langle \alpha_1 \alpha_2 \alpha_3, \alpha_2 \alpha_3 \alpha_4
  \alpha_5 \rangle$.  Its resolution quiver is
  \[
    \begin{tikzcd}
        &   & 4\\
      1 & 5 & 0 & 3\\
        &   & 2
      \arrow[from=2-1, to=2-2]
      \arrow[from=2-2, to=2-3]
      \arrow[from=2-3, to=2-4]
      \arrow[from=2-4, to=2-3, in=30, out=150]
      \arrow[from=1-3, to=2-3]
      \arrow[from=3-3, to=2-3]
    \end{tikzcd}
  \]
  and its weight is $1$.  $A$ has global dimension~$3$, and its
  Cartan matrix is
  \[
    \cartan_A=\begin{pmatrix}
      1 & 1 & 1 & 0 & 0 &0 \\
      0 & 1 & 1 & 1 & 1 &0 \\
      0 & 0 & 1 & 1 & 1 &1 \\
      0 & 0 & 0 & 1 & 1 &1 \\
      0 & 0 & 0 & 0 & 1 &1 \\
      0 & 0 & 0 & 0 & 0 &1
    \end{pmatrix}
  \]
  and the magnitude of $A$ is two.
\end{example}
\begin{example}
  Let $A$ be the cyclic Nakayama algebra with Kupisch series
  $[3, 3, 3, 4]$. Then $A=KQ/I$ with quiver
  \[
    Q=\begin{tikzcd}
      0 & 1 \\
      3 & 2
      \arrow["{\alpha_1}", from=1-1, to=1-2]
      \arrow["{\alpha_2}", from=1-2, to=2-2]
      \arrow["{\alpha_4}", from=2-1, to=1-1]
      \arrow["{\alpha_3}", from=2-2, to=2-1]
    \end{tikzcd}\] and relations
  $I=\langle \alpha_1 \alpha_2 \alpha_3, \alpha_2 \alpha_3 \alpha_4,
  \alpha_3 \alpha_4 \alpha_1 \rangle$.  Its resolution quiver is
  \[
    \begin{tikzcd}
      2 & 1 & 0 & 3
      \arrow[from=1-1, to=1-2]
      \arrow[from=1-2, to=1-3]
      \arrow[from=1-3, to=1-4]
      \arrow[from=1-4, to=1-4, loop, in=55, out=125, distance=10mm]
    \end{tikzcd}
  \]
  and the weight is $1$. Thus $A$ has finite global dimension, which
  is~$5$.  Its Cartan matrix is
  \[
    \cartan_A=\begin{pmatrix}
      1 & 1 & 1 & 0\\
      0 & 1 & 1 & 1\\
      1 & 0 & 1 & 1\\
      1 & 1 & 1 & 1
    \end{pmatrix}
  \]
  and the magnitude of $A$ is one.
\end{example}

\subsection{Nakayama algebras and Dyck paths}

We next recall some definitions concerning Dyck paths and some connections with Nakayama algebras.

A \Dfn{Dyck path} of semilength~$n$ is a lattice path starting at
$(0,0)$ and ending at $(2n,0)$, consisting of diagonal \Dfn{up steps}
$(1,1)$ and \Dfn{down steps} $(1,-1)$, with the additional property
that it never goes below the $x$-axis.

The \Dfn{area sequence}\footnote{The area sequence as defined here is
  a variant of the traditional version, which is obtained by dropping
  the final entry $c_n=1$, subtracting $2$ from all other entries and
  reversing the sequence.} $[c_0, c_1,\ldots,c_n]$ of a Dyck path is
obtained by letting $c_k$ be the biggest integer such that
$(2k+c_k-1,c_k-1)$ lies on the Dyck path, for $0\le k\le n$. In other
words, $c_k$ is the number of lattice points lying on the diagonal
starting at $(2k,0)$ in direction $(1,1)$ between the $x$-axis and
the Dyck path.  We refer to \Cref{exampleDyckBounce,example2} for
some visualisations of Dyck paths and their area sequences.

Sending a Dyck path to its area sequence is a bijection between Dyck
paths of semilength $n$ and sequences $[c_0, c_1,\ldots,c_n]$
satisfying
\[
    c_{i+1}+1\ge c_i \ge 2\text{ for $0\le i\le n-1$}, \text{ and } c_n=1.
\]
Comparing these properties to~\cref{lnak_eq}, we obtain a simple
bijection between connected linear Nakayama algebras with~$n$ simple
modules and Dyck paths of semilength~$n-1$.  Explicitly, the
indecomposable $A$-module $e_i A/e_i J^k$ corresponds to the point
$(i, k-1)$ on or below the Dyck path.
In a more representation theoretic language, this bijection sends a linear Nakayama algebra algebra $A$ to the Dyck path given by the top boundary of the Auslander-Reiten quiver of $A$.

The \Dfn{height} of a Dyck path is the maximum of the $c_i$'s in the
area sequence, minus one.  Note that, under the bijection above, this equals the
Loewy length of the linear Nakayama algebra, minus one.  In
\Cref{sec:equidistributed} we show that the distribution of the
global dimension of connected linear Nakayama algebras coincides with
the distribution of the height of Dyck paths.

In \Cref{sec:sincere} we relate the global dimension of sincere
Nakayama algebras to the \Dfn{bounce count} $b_D$ of a Dyck path $D$,
which we define next.  Let $D$ be a Dyck path with area sequence
$[c_0, c_1,\ldots,c_n]$.  The \Dfn{bounce path} of $D$ is a Dyck path
associated to $D$ defined by starting at $(0,0)$ with $b_1=c_0-1$ up
steps followed by $c_0-1$ down steps.  For $t>0$ we inductively
define it as starting at $(2b_t,0)$, taking $c_{b_t}-1$ up steps
followed by $c_{b_t}-1$ down steps, to arrive at $(2b_{t+1},0)$.
Since the bounce path remains below the Dyck path, we eventually have
$(2b_d,0) = (2n,0)$.  The \Dfn{bounce count}~$b_D$ is then $d$, the
number of iteration steps.

\begin{example}
  \label{exampleDyckBounce}
  The following picture illustrates the Dyck path (black) of
  semilength~$11$ with area sequence $[3,5,5,5,4,3,4,3,3,3,2,1]$ and
  its bounce path (blue, dotted) with its bounces $b_1=2$, $b_2=6$,
  $b_3=9$, and $b_4 = 11$. The height of this Dyck path is $4$ and
  its bounce count is $4$.
  \[
    \begin{tikzpicture}[scale=\textwidth/25cm]
      \node at (22,-0.5) {\textcolor{blue}{($2b_4,0$)}};
      \node at (18,-0.5) {\textcolor{blue}{($2b_3,0$)}};
      \node at (12,-0.5) {\textcolor{blue}{($2b_2,0$)}};
      \node at (4,-0.5) {\textcolor{blue}{($2b_1,0$)}};
      \node at (0,-0.5) {(0,0)};
      \draw[->] (0, -3) -- node[right, xshift=8pt] {$x$} (1, -3);
      \draw[->] (0, -3) -- node[above, yshift=8pt] {$y$} (0, -2);
      \draw[dotted] (0, 0) grid (22, 4);
      \draw[color=blue, thick, dashed, line width=2] (22, 0) -- (20,2) -- (18,0) -- (15,3) -- (12,0) -- (8,4) -- (4,0) -- (2,2) --(0,0);
      \draw[rounded corners=1, color=black, line width=1] (22, 0) -- (20, 2) -- (19, 1) -- (18, 2) -- (17, 1) -- (15, 3) -- (13, 1) -- (10, 4) -- (9, 3) -- (8, 4) -- (7, 3) -- (6, 4) -- (3, 1) -- (2, 2) -- (0, 0);
    \end{tikzpicture}
  \]
\end{example}

\begin{example}
  \label{example2}
  Let $D$ be the Dyck path with area sequence $[3,4,4,3,2,1]$ of
  semilength~$5$.  The height of this Dyck path is~$3$.
  \[
    \begin{tikzpicture}[scale=\textwidth/25cm]
      \node at (10,-0.5) {\textcolor{blue}{($2b_2$,0)}};
      \node at (0,-0.5) {(0,0)};
      \node at (4,-0.5) {\textcolor{blue}{($2b_1$,0)}};
      \draw[dotted] (0, 0) grid (10, 3);
      \draw[color=blue, thick, dashed, line width=2] (10, 0) -- (7,3) -- (4,0) -- (2,2) -- (0,0);
      \draw[rounded corners=1, color=black, line width=1] (10, 0) -- (7, 3) -- (6, 2) -- (5, 3) -- (3, 1) -- (2, 2) -- (0, 0);
    \end{tikzpicture}
  \]
  Its associated bounce path is visualised by the blue dotted line,
  the two bounce points are $b_1= 2$ and $b_2 = 5$.  Thus, the bounce
  count is $2$.
\end{example}

\section{The global dimension of Nakayama algebras with a linear quiver}
\label{sec:equidistributed}
In this section we prove \Cref{thm:equidistributed}.  To do so, we
consider products of connected linear Nakayama algebras
$A = A_1 \times A_2 \times \dots \times A_k$ with~$n$ simple modules
in total.  Let $\mathcal{A}_n$ denote the set of such products.  We
emphasize that the order of the factors matters,
$A_1 \times A_2 \neq A_2 \times A_1$.

The \Dfn{Kupisch series} $[c_0,c_1,\ldots,c_{n-1}]$ of~$A$ is the
concatenation of the Kupisch series of $A_1,\dots,A_k$.  In
particular, the number of entries equal to~$1$ in the Kupisch series
of~$A$ equals the number of factors~$k$, and the global dimension is
\begin{equation*}
  \gldim(A) = \max\big\{\gldim(A_1),\dots,\gldim(A_k)\big\}.
\end{equation*}

We first associate a directed graph with $A$, similar to the
resolution quiver of a Nakayama algebra mentioned in
\Cref{sec:algprops}.  Let $\tau(A)$ be the (directed) graph with
vertices $\{0,\dots, n\}$ and edges $i+c_i \rightarrow i$ for
$0\leq i < n$.
\begin{figure}
  \centering
  \scalebox{0.7}
{ \newcommand{\nodea}{\node[draw,circle] (a) {$6$}
;}\newcommand{\nodeb}{\node[draw,circle] (b) {$2$}
;}\newcommand{\nodec}{\node[draw,circle] (c) {$3$}
;}\newcommand{\noded}{\node[draw,circle] (d) {$0$}
;}\newcommand{\nodee}{\node[draw,circle] (e) {$4$}
;}\newcommand{\nodef}{\node[draw,circle] (f) {$5$}
;}\newcommand{\nodeg}{\node[draw,circle] (g) {$1$}
;}\begin{tikzpicture}[auto]
\matrix[column sep=.3cm, row sep=.3cm,ampersand replacement=\&]{
         \&         \& \nodea  \&         \&         \\
 \nodeb  \& \nodec  \&         \& \nodee  \& \nodef  \\
         \& \noded  \&         \&         \& \nodeg  \\
};
\path[ultra thick, red] (c) edge (d)
	(f) edge (g)
	(a) edge (b) edge (c) edge (e) edge (f);
\end{tikzpicture}}
\caption{The tree corresponding to the Kupisch series $[3,4,4,3,2,1]$.}
\label{fig:tree0}
\end{figure}
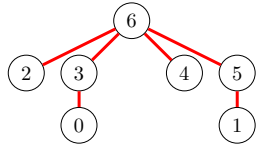

\begin{lemma}
    Let $A \in \mathcal A_n$.
    Then $\tau(A)$ is a tree rooted at the vertex~$n$.
\end{lemma}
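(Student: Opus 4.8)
The plan is to exhibit $\tau(A)$ as the graph of a ``parent'' function on the vertices $\{0,\dots,n-1\}$ and to show that iterating this function from any vertex reaches $n$. Concretely, set $p(i) := i + c_i$ for $0 \le i < n$, so that the edges of $\tau(A)$ are exactly $p(i) \to i$. I would first record two elementary facts about the concatenated Kupisch series $[c_0,\dots,c_{n-1}]$: that $c_i \ge 1$ for every $i$ (so $p(i) > i$), and that $i + c_i \le n$ for every $0 \le i < n$ (so $p(i) \in \{1,\dots,n\}$ and every edge stays inside the vertex set). The first is immediate since every entry of a Kupisch series is at least $1$, with the boundary entries equal to $1$ and the interior ones at least $2$.

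The inequality $i + c_i \le n$ is the crux, and I would prove it by downward induction on $i$. The base case $i = n-1$ uses $c_{n-1} = 1$, which holds because the last factor $A_k$ is a connected linear Nakayama algebra, so its Kupisch series---the tail of the concatenation---ends in $1$ by \cref{lnak_eq}; thus $(n-1) + c_{n-1} = n$. For the inductive step I would use the inequality $c_i \le c_{i+1} + 1$, valid for all $0 \le i < n-1$: inside a single factor this is precisely \cref{lnak_eq}, and across a factor boundary one has $c_i = 1$, which makes it trivial. Then $i + c_i \le i + c_{i+1} + 1 = (i+1) + c_{i+1} \le n$ by the induction hypothesis.

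With these two facts in hand the tree structure is immediate. Every vertex $i < n$ is the target of exactly one edge, namely $p(i) \to i$, while $n$ is the target of none; hence $n$ is the unique vertex of in-degree $0$ and $\tau(A)$ has exactly $n$ edges on $n+1$ vertices. Starting from any vertex and repeatedly applying $p$ produces a strictly increasing sequence (as $p(i) > i$) bounded above by $n$ (as $p(i) \le n$), which must therefore terminate at $n$; so every vertex is joined to $n$ and $\tau(A)$ is connected. A connected graph on $n+1$ vertices with $n$ edges is a tree, and since every edge $p(i) \to i$ points from the larger index $p(i)$ to the smaller index $i$, all edges are directed away from $n$, exhibiting $\tau(A)$ as an arborescence rooted at $n$. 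The only genuine input is the bound $i + c_i \le n$, everything else being bookkeeping, so I expect that step---and in particular the verification of $c_i \le c_{i+1}+1$ at the boundaries between factors---to be the one place where the defining inequalities of \cref{lnak_eq} must be used with care.
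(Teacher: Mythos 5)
Your proof is correct and in essence matches the paper's one-line argument: both reduce to the fact that $\tau(A)$ has $n$ edges on $n+1$ vertices together with the monotonicity $i+c_i>i$ (the paper deduces acyclicity from decreasing labels and gets connectivity from the edge count, while you deduce connectivity by iterating the parent map and get acyclicity from the edge count). The only genuine addition is your downward induction showing $i+c_i\le n$ across factor boundaries, a point the paper's proof silently assumes in order for every edge to land inside the vertex set $\{0,\dots,n\}$.
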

\begin{proof}
  The graph is acyclic because vertex labels decrease along directed
  edges.  Since the graph has one more vertex than it has edges, it must be a tree.
\end{proof}

We draw the directed edges of~$\tau(A)$ downwards as indicated in \Cref{fig:tree0}.
Moreover, we order the children of each vertex increasingly from left to right, turning $\tau(A)$ into an \emph{ordered rooted tree}.

For $A = A_1\times A_2$, the tree~$\tau(A)$ is obtained from the trees~$\tau(A_1),\tau(A_2)$ as follows.

\begin{proposition}\label{prop:concatenate}
  Let $A_1$ and $A_2$ be linear Nakayama algebras with $n_1$ and $n_2$ simple modules, respectively.
  Then the graph $\tau(A_1 \times A_2)$ is obtained from the graphs $\tau(A_1)$ and $\tau(A_2)$ by increasing all vertex labels of $\tau(A_2)$ by $n_1$ and then identifying the vertex with new label~$n_1$ (originally vertex~$0$ of $\tau(A_2)$) with the vertex $n_1$ of $\tau(A_1)$.
\end{proposition}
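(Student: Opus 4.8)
The plan is to compare the edge sets of $\tau(A_1\times A_2)$ and of the glued graph directly, by partitioning the index range according to which factor a vertex comes from. Write the Kupisch series of $A_1$ as $[c_0,\dots,c_{n_1-1}]$ and that of $A_2$ as $[c'_0,\dots,c'_{n_2-1}]$, so that $A=A_1\times A_2$ has Kupisch series $[b_0,\dots,b_{n-1}]$ with $n=n_1+n_2$, where $b_i=c_i$ for $0\le i<n_1$ and $b_{n_1+j}=c'_j$ for $0\le j<n_2$. Recall that an edge of $\tau(A)$ has the form $i+b_i\to i$ for $0\le i<n$.

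First I would record the geometric fact that for a linear Nakayama algebra the edges do not ``leak'' past the root: I claim $i+c_i\le n_1$ for all $0\le i<n_1$. This follows by downward induction starting from $c_{n_1-1}=1$, which gives $(n_1-1)+c_{n_1-1}=n_1$; the inductive step uses $c_i\le c_{i+1}+1$ from \cref{lnak_eq} to obtain $i+c_i\le(i+1)+c_{i+1}\le n_1$. Applying the same estimate to $A_2$ gives $j+c'_j\le n_2$, hence $(n_1+j)+c'_j\le n$, so every edge of $\tau(A)$ indeed lands inside $\{0,\dots,n\}$.

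Next I would split the edges of $\tau(A)$ into the two ranges $0\le i<n_1$ and $n_1\le i<n$. In the first range the edge $i+c_i\to i$ is, by the estimate above, an edge on the vertex set $\{0,\dots,n_1\}$, and these are precisely the edges of $\tau(A_1)$. In the second range the substitution $i=n_1+j$ turns $i+b_i\to i$ into $(n_1+j+c'_j)\to(n_1+j)$, which is exactly the image of the edge $j+c'_j\to j$ of $\tau(A_2)$ under the label shift $x\mapsto x+n_1$. Since the targets in the first range lie in $\{0,\dots,n_1-1\}$ and those in the second range in $\{n_1,\dots,n-1\}$, the edge set of $\tau(A)$ is the disjoint union of the edges of $\tau(A_1)$ and of the shifted copy of $\tau(A_2)$.

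Finally I would check the vertex identification. The shift sends the vertex set $\{0,\dots,n_2\}$ of $\tau(A_2)$ onto $\{n_1,\dots,n\}$, whose only common vertex with $\{0,\dots,n_1\}$ is $n_1$, the image of vertex~$0$ of $\tau(A_2)$ and simultaneously the root of $\tau(A_1)$. As no edge of $\tau(A_1)$ meets a vertex above $n_1$ and no shifted edge of $\tau(A_2)$ meets a vertex below $n_1$, gluing the two graphs at this single vertex reproduces exactly the vertex set $\{0,\dots,n\}$ and the edge set of $\tau(A)$. The argument is essentially a direct bookkeeping verification; the only genuine point requiring care is the inequality $i+c_i\le n_1$, since it is precisely what guarantees that the two pieces overlap in the vertex $n_1$ alone, rather than sharing further vertices or edges.
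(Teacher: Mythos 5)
Your proof is correct and is simply the detailed write-out of the verification that the paper declares to ``follow immediately from the definition'': the Kupisch series of $A_1\times A_2$ is the concatenation of the two Kupisch series, so the edge set splits accordingly, with the inequality $i+c_i\le n_1$ (forced by $c_{n_1-1}=1$ and $c_i\le c_{i+1}+1$) guaranteeing that the two pieces meet only in the vertex $n_1$. Nothing is missing; you have just made explicit the bookkeeping the authors left implicit.
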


\begin{proof}
  This follows immediately from the definition.
\end{proof}

The \Dfn{depth} of a vertex~$i$ in a rooted tree~$T$ is the number of edges on the path from~$i$ to the root and denoted by~$\depth_T(i)$.
The depth of a rooted tree $T$, denoted by $\depth(T)$, is the maximal depth of a vertex of $T$.
The \Dfn{distance} between two vertices~$i,j$ in~$T$, denoted by $\dist_T(i,j)$ is the number of edges on the unique (undirected) path between~$i$ and~$j$.
In particular, $\depth_T(i) = \dist_T(i,n)$.

\begin{lemma}
\label{thm:projdim}
  Let~$A \in \mathcal A_n$ and let $T = \tau(A)$.
  The projective dimension of the simple module~$S_i$ of~$A$ for $0\le i<n$ is
  \[
    \pdim(S_i) = \dist_T(i,i+1) - 1\,.
  \]
  In particular,
  \[
    \gldim(A)=\max_{0\leq i < n} \dist_T(i,i+1) - 1\,.
  \]
\end{lemma}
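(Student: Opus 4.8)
The plan is to compute the minimal projective resolution of each simple module combinatorially and to read off $\pdim(S_i)$ as one less than the length of the path from $i$ to $i+1$ in the tree $T=\tau(A)$. I record the syzygies of $S_i = e_iA/e_iJ^1$ by writing $M_j := \Omega^j(S_i) = e_{a_j}A/e_{a_j}J^{k_j}$. The syzygy formula recalled in \Cref{sec:algprops}, namely $\Omega^1(e_aA/e_aJ^k)=e_{a+k}A/e_{a+k}J^{c_a-k}$, yields the recursion $a_{j+1}=a_j+k_j$ and $k_{j+1}=c_{a_j}-k_j$, with initial data $a_0=i$, $k_0=1$, so that $a_1=i+1$. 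Here $\ell := \pdim(S_i)$ is the least index with $M_\ell$ projective, i.e.\ $k_\ell=c_{a_\ell}$. The identity that makes everything work is $a_{j+2}=a_{j+1}+k_{j+1}=a_j+c_{a_j}$, whose right-hand side is by definition the parent of $a_j$ in $T$.

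It follows by induction that the even-indexed terms $a_0,a_2,a_4,\dots$ are the successive ancestors $i, i+c_i, \dots$ of $i$, and the odd-indexed terms $a_1,a_3,\dots$ are the successive ancestors of $i+1$, each chain climbing towards the root $n$. Since $a_{j+1}=a_j+k_j>a_j$ as long as $M_j\neq 0$, the sequence $a_0<a_1<\dots<a_{\ell+1}$ is strictly increasing, so all these vertices are distinct. The goal is then to show that $\{a_0,\dots,a_{\ell+1}\}$ is exactly the vertex set of the path from $i$ to $i+1$ in $T$.

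The crux is to locate where the two ancestor chains meet. From $k_\ell=c_{a_\ell}$ I get that $a_{\ell+1}=a_\ell+c_{a_\ell}$ is the parent of $a_\ell$, while the displayed identity at $j=\ell-1$ gives that $a_{\ell+1}=a_{\ell-1}+c_{a_{\ell-1}}$ is the parent of $a_{\ell-1}$. Hence $a_{\ell-1}$ and $a_\ell$ are two children of $v:=a_{\ell+1}$; they are distinct by strict monotonicity and, having opposite index parities, one is an ancestor of $i$ and the other an ancestor of $i+1$. Thus $v$ is a common ancestor of $i$ and $i+1$, and I claim it is their lowest common ancestor $p$: were $v$ strictly above $p$, the paths from $i$ and from $i+1$ to the root would already coincide below $v$, forcing the two children $a_{\ell-1},a_\ell$ to be equal, a contradiction. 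The degenerate case $\ell=0$ (where $c_i=1$, so $S_i$ is simple projective and $i+1$ is the parent of $i$) is checked directly.

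With $a_{\ell+1}=p$ in hand, counting concludes the argument. Every $a_j$ lies on the path from $i$ to $i+1$; conversely, since $a_{\ell-1}$ and $a_\ell$ are the two children of $p$ lying on the path and the even (resp.\ odd) terms exhaust the ancestors of $i$ (resp.\ $i+1$) up to $p$, every vertex of the path occurs among the $a_j$. As these are $\ell+2$ distinct vertices and a path with $\dist_T(i,i+1)$ edges has $\dist_T(i,i+1)+1$ vertices, I obtain $\pdim(S_i)=\ell=\dist_T(i,i+1)-1$, and the formula for $\gldim(A)$ is then immediate from Auslander's \Cref{Auslanderthm}. I expect the middle step to be the main obstacle: proving that the walk terminates exactly at the lowest common ancestor, so that the interleaved chains cover the whole path with neither gaps nor overshoot. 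This rests on the distinctness of the two children of $p$, hence on the strict monotonicity of the walk together with the fact that $T$ is a genuine tree, which in turn is forced by the Kupisch inequalities \cref{lnak_eq}.
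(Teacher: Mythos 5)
Your proposal is correct and follows essentially the same route as the paper: both compute the syzygy recursion $\Omega(e_aA/e_aJ^k)=e_{a+k}A/e_{a+k}J^{c_a-k}$, observe that the even- and odd-indexed syzygies trace the two ancestor chains of $i$ and $i+1$ in $T$, and use the projectivity of $\Omega^{\ell}(S_i)$ to show the chains merge exactly one step later. You are in fact slightly more explicit than the paper about why the meeting vertex is the \emph{lowest} common ancestor (via strict monotonicity and distinctness of the two children), a point the paper leaves implicit in its claim that the two paths are disjoint.
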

\begin{proof}
  By \Cref{prop:concatenate} it is enough to prove that $\pdim(S_i) = \dist_T(i,i+1) - 1$ for each connected component of~$A$.
  We thus assume that~$A$ is connected.

  We show that for $0\leq i < n$, the sequence of syzygies of the
  simple module $S_i$ of projective dimension $\ell$,
  \[
    S_i = \Omega^0(S_i), \Omega^1(S_i), \dots, \Omega^\ell(S_i), \Omega^{\ell+1}(S_i) = 0,
  \]
  corresponds to the unique shortest path from $i$ to $i+1$ in~$T$. Every linear Nakayama algebra has finite global dimension, implying $\ell<\infty$.
  Let $b_{i,k} = e_i A/ e_i J^k$.  Fixing $i$, we let $0\leq s_d < n$ be such that $\Omega^d(S_i) = b_{s_d, k}$ for some $k$, for $0\leq d\leq \ell$.  In particular, $b_{i, 1} = S_i$, so $s_0=i$.

  Recall from \Cref{sec:algprops} that $\Omega(b_{i, k}) = b_{i+k, c_i-k}$.  In particular, $\Omega(S_i) = \Omega(b_{i, 1}) = b_{i+1, c_i-1}$, so $s_1=i+1$.  Moreover,
  $\Omega^2(b_{i, k}) = \Omega(b_{i+k, c_i-k}) = b_{i+c_i, c_{i+k}-c_i+k}$, which implies that $s_{d+2} = s_d + c_{s_d}$.  Since $T$ has edges from $i$ to $i+c_i$ for $0\leq i < n$, it follows that the sequences
  \[
    s_0, s_2,\dots,\quad\text{and}\quad s_1, s_3,\dots
  \]
  are the vertices of two disjoint
  paths in $T$ from $i$ and $i+1$, respectively, towards the root.

  Let $x=s_{\ell-1}$ and $y=s_\ell$.  It remains to show that
  $x + c_x = y + c_y$ to conclude that the paths indeed meet at this
  vertex.  Since $\Omega^{\ell} = \Omega\circ\Omega^{\ell-1}$ we have
  that $\Omega^{\ell-1}(S_i) = b_{x, y-x}$, and that
  $\Omega^{\ell}(S_i) = b_{y, c_x - y + x}$.  Since
  $\Omega^{\ell}(S_i)$ is projective,
  $\Omega^{\ell}(S_i) = b_{y, c_y}$.  Therefore $c_y = c_x - y + x$, and the unique shortest path from $i$ to $i+1$ indeed consists of $\ell + 1$ edges.

  Finally, the statement about the global dimension follows with~\Cref{Auslanderthm}.
\end{proof}

\begin{example}
\Cref{fig:tree0} displays the tree corresponding to the Kupisch series $[3,4,4,3,2,1]$ considered in \Cref{example-A-344321}.  Setting $b_{i,k} = e_iA/e_i J^k$, the sequence of syzygies of the simple module $S_0 = b_{0,1}$ is
\[
\Omega^1(S_0)=b_{1,2},\quad \Omega^2(S_0)=b_{3,2},\quad \Omega^3(S_0)=b_{5,1},\quad \Omega^4(S_0)=b_{6,0} = 0.
\]
In particular, the projective dimension of $S_0$ is three.  Indeed, the path from vertex~$0$ to vertex~$1$ passes through vertices~$3$,~$6$ and~$5$, and has four edges.
\end{example}

We call a rooted tree with vertices~$\{0,\dots,n\}$ \Dfn{naturally labeled}, if the labels increase towards the root and the labels of the children of~$i$ are all smaller than the labels of the children of~$i+1$.  The trees in \Cref{fig:tree0,fig:tree1} are all naturally labeled.
\begin{lemma}
    Let $A\in\mathcal A_n$.  Then $\tau(A)$ is naturally labeled.
\end{lemma}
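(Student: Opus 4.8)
The plan is to verify the two defining properties of a naturally labeled tree directly from the concatenated Kupisch series $[c_0,\dots,c_{n-1}]$ of $A$. Recall that in $\tau(A)$ the edge attached to $i$ is $i+c_i\to i$, so that the parent of a vertex $i$ with $0\le i<n$ is $i+c_i$, while $n$ is the root. Since every $c_i\ge 1$, we have $i+c_i>i$, hence labels strictly increase along the unique path from any vertex to the root; this is the first condition, and it costs nothing. (That the parent map is well defined, i.e.\ $i+c_i\le n$, is already guaranteed by the earlier lemma stating that $\tau(A)$ is a tree on $\{0,\dots,n\}$.)

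For the second condition I would first record that the set of children of a vertex $v$ is $\{a : a+c_a=v,\ 0\le a<n\}$, and then reduce the whole claim to the following statement: whenever $a+c_a=i$ and $b+c_b=i+1$ with $0\le a,b<n$, one has $a<b$. This says precisely that every child of $i$ has a smaller label than every child of $i+1$. I would prove it by contradiction: assuming $a\ge b$, I telescope the Kupisch inequality $c_j\le c_{j+1}+1$ along $j=b,b+1,\dots,a-1$ to get $c_b\le c_a+(a-b)$. Substituting $c_a=i-a$ and $c_b=i+1-b$ yields $i+1-b\le i-b$, i.e.\ $1\le 0$, a contradiction. Hence $a<b$, establishing the second condition. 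This telescoping is the heart of the argument.

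The one point requiring care — and the main, though mild, obstacle — is that the telescoping invokes $c_j\le c_{j+1}+1$ for every index $0\le j<n-1$ of the \emph{concatenated} series, not only within a single factor. Inside each factor this is exactly~\eqref{lnak_eq}. At a boundary between two factors the left index has $c_j=1$ and the right index has $c_{j+1}\ge 2$, so $c_j=1\le c_{j+1}+1$ holds trivially; thus the inequality is valid across the entire series. I would finish by checking that the indices entering the telescope satisfy $b\le j\le a-1<n-1$ (using $a<n$), so that no out-of-range term is ever used and the argument goes through uniformly for all products $A\in\mathcal A_n$.
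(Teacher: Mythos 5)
Your proof is correct and follows essentially the same route as the paper's: both telescope the Kupisch inequality $c_j\le c_{j+1}+1$ (valid across factor boundaries) to compare a child of $i$ with a child of $i+1$ and derive the contradiction $1\le 0$. The only cosmetic difference is that the paper phrases the comparison via the largest child of $i$ and the smallest child of $i+1$, whereas you compare arbitrary children, which is equivalent.
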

\begin{proof}
  The Kupisch series $[c_0,\dots,c_{n-1}]$ of~$A$ satisfies
  $c_i+i \leq c_{i+1} + i+1$, implying $c_i\leq c_{i+k} + k$ for
  $k\geq 0$.  It remains to show that all children of~$i$ have labels
  smaller than the labels of the children of~$i+1$.  Let~$k$ be the
  largest child of $i = c_k + k$ and let $\ell$ be the smallest child
  of $i + 1 = c_k+k+1 = c_\ell+\ell$.  Suppose that $\ell < k$.
  Then,
  \[
  c_\ell + \ell = c_k+k+1 = c_{\ell + (k-\ell)} + (k-\ell) + \ell + 1 \geq c_\ell + \ell + 1,
  \]
  which is a contradiction.
\end{proof}

Let $\mathcal L_n$ be the set of naturally labeled trees with vertices $\{0,\dots,n\}$.
\begin{lemma}
    The map $\tau:\mathcal A_n\to\mathcal L_n$ is a bijection.
\end{lemma}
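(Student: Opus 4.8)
\emph{Sketch.} The plan is to exhibit an explicit inverse $\sigma\colon\mathcal L_n\to\mathcal A_n$ and check that $\sigma$ and $\tau$ are mutually inverse. Given $T\in\mathcal L_n$, write $p(i)$ for the parent of a non-root vertex $i$ and set $c_i:=p(i)-i$ for $0\le i<n$. Define $\sigma(T)$ to be the product of connected linear Nakayama algebras whose concatenated Kupisch series is $[c_0,\dots,c_{n-1}]$; this is an unambiguous element of $\mathcal A_n$ once the sequence is admissible, since the indices with $c_i=1$ determine the ordered list of factors. Granting that $\sigma$ is well defined, the two composites are immediate from the definitions: $\tau(\sigma(T))$ has an edge $i+c_i\to i$, that is $p(i)\to i$, for each $i$, and hence equals $T$; and $\sigma(\tau(A))$ reads off $c_i=(i+c_i)-i$, returning $A$. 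Thus the whole content of the lemma is the well-definedness of $\sigma$, namely that $[c_0,\dots,c_{n-1}]$ is genuinely the Kupisch series of a product in $\mathcal A_n$.

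To verify admissibility, I would first record the characterisation that a sequence $[c_0,\dots,c_{n-1}]$ arises from a product in $\mathcal A_n$ exactly when $c_i\ge1$ for all $i$, $c_{n-1}=1$, and $c_{i+1}+1\ge c_i$ for $0\le i<n-1$: the positions with $c_i=1$ cut the sequence into consecutive blocks, each of which then satisfies \cref{lnak_eq} and so is the Kupisch series of a connected linear Nakayama algebra. Translating the three conditions through $c_i=p(i)-i$, the first, $c_i\ge1$, holds because labels increase towards the root, so $p(i)>i$; the second, $c_{n-1}=1$, holds because $n$ is the unique vertex exceeding $n-1$, forcing $p(n-1)=n$; and the third, $c_{i+1}+1\ge c_i$, is equivalent to $p(i+1)\ge p(i)$, i.e.\ to the parent map being weakly increasing in the vertex label.

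The crux is therefore the combinatorial statement that the naturally labeled hypothesis forces $p(0)\le p(1)\le\cdots\le p(n-1)$. Since the children of $i$ all carry smaller labels than the children of $i+1$, and every child of a vertex is smaller than that vertex, I would argue that the children sets, read in increasing order of their parent, partition $\{0,\dots,n-1\}$ into consecutive intervals listed in increasing order; monotonicity of $p$ is then immediate. I expect the main obstacle to be making this interval description fully rigorous, and in particular dealing with vertices that have \emph{no} children, so that the ordering of children across such gaps is genuinely transitive and not merely an adjacent comparison. To close this cleanly I would run an induction, either by peeling off the vertex $n-1$, which is always a child of the root, or, more structurally, by inducting along the gluing decomposition of \Cref{prop:concatenate}, splitting both $T$ and the candidate algebra at the first index with $c_i=1$; the representation-theoretic input is already contained in the earlier lemmas, so at this stage the argument is purely about trees.
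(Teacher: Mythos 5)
Your construction of the inverse is the same as the paper's: set $c_i = p(i)-i$ and check that the resulting sequence is the Kupisch series of an element of $\mathcal A_n$, the only nontrivial point being $c_i \le c_{i+1}+1$, i.e.\ $p(i)\le p(i+1)$. The paper disposes of this in one line (``because $T$ is naturally labeled, we have $j\le k$''), whereas you isolate it as the crux and worry about childless vertices breaking transitivity. That worry is justified, and it is the one place where your sketch is not yet a proof: the adjacent-pairs condition in the stated definition of \emph{naturally labeled} does \emph{not} by itself imply monotonicity of the parent map. For $n=4$, take the tree with root~$4$ and edges $4\to 0$, $2\to 1$, $4\to 2$, $4\to 3$: labels increase towards the root, and every comparison of the children of $i$ with the children of $i+1$ is vacuous because one of the two sets is empty, yet $p(0)=4>2=p(1)$ and the sequence $[4,1,2,1]$ is not a Kupisch series. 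Consequently no induction --- neither peeling off the vertex $n-1$ nor splitting along \Cref{prop:concatenate} --- can derive $p(0)\le\dots\le p(n-1)$ from the adjacent condition alone, because for such trees it is simply false.

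The repair is definitional rather than inductive. The condition one actually wants, and the one satisfied by every tree in the image of $\tau$ (the proof of the preceding lemma begins by showing $c_i\le c_{i+k}+k$ for all $k\ge 0$, which is precisely the statement that $p$ is weakly increasing), is that the children of $i$ are all smaller than the children of $j$ for every pair $i<j$; this is equivalent to weak monotonicity of the parent map and to your interval description of the children sets. With that reading of ``naturally labeled'', both your verification and the paper's become immediate, and your remaining admissibility checks ($c_i\ge 1$ because labels increase towards the root, $c_{n-1}=1$ because $n$ is the only vertex exceeding $n-1$, and the splitting of the sequence at entries equal to $1$) are correct and in fact spelled out more completely than in the paper. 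In summary: same strategy as the paper, with one genuine gap at the crux --- a gap that the paper's own proof passes over silently.
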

\begin{proof}
    Let $T$ be a naturally labeled tree.  For each vertex $i\in\{0,\dots,n-1\}$ with parent $j$, define $c_i=j-i$.  It suffices to show that $[c_0,\dots,c_{n-1}]$ is the Kupisch series of a (not necessarily connected) linear Nakayama algebra, that is, it satisfies $c_i\leq c_{i+1}+1$ for $0\leq i < n-1$.  To do so, let $j$ be the parent of $i$ and let $k$ be the parent of $i+1$.  Then, because $T$ is naturally labeled, we have $j\leq k$.  It follows that $c_i = j - i\leq k - i = c_{i+1} + 1$.
\end{proof}

\begin{lemma}\label{lem:interval}
  The labels of the vertices of a naturally labeled tree having the same distance to the root form an interval of $\{0,\dots,n\}$.
\end{lemma}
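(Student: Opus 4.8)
The plan is to follow how the level sets evolve as the distance to the root grows, using that in a naturally labeled tree the parent map is weakly monotone. Write $p(i)$ for the parent of a non-root vertex $i$, so $p$ is a map $\{0,\dots,n-1\}\to\{1,\dots,n\}$. Since the labels increase towards the root, the root is the vertex~$n$, and for $d\ge 0$ I set $L_d=\{\,i : \depth_T(i)=d\,\}$, the set of vertices at distance $d$ from the root. The first thing I would record is the recursion $L_0=\{n\}$ and $L_{d+1}=p^{-1}(L_d)$ for $d\ge 0$: a vertex has depth $d+1$ exactly when it is non-root and its parent has depth $d$, and every vertex of positive depth has a well-defined parent.

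The key structural input is that $p$ is weakly increasing, i.e. $p(i)\le p(i+1)$ for $0\le i<n-1$; this is precisely the monotonicity of parents extracted from the naturally labeled hypothesis in the proof that $\tau$ is a bijection. Granting it, I would argue by induction on~$d$. The base case $L_0=\{n\}$ is a (degenerate) interval. For the inductive step, assume $L_d=\{a,a+1,\dots,b\}$ is an interval. Then $L_{d+1}=p^{-1}(\{a,\dots,b\})=\{\,i : a\le p(i)\le b\,\}$, and the preimage of an interval under a weakly monotone map is again an interval: if $i_1<i_2$ both lie in $L_{d+1}$ and $i_1<i<i_2$, then $a\le p(i_1)\le p(i)\le p(i_2)\le b$, so $i\in L_{d+1}$. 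Hence $L_{d+1}$ is an interval, and the induction closes. Thus every $L_d$ is an interval of $\{0,\dots,n\}$.

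The only point that genuinely requires care, and the place where the hypothesis is essential, is the monotonicity of $p$: it is equivalent to demanding that children of lower-labeled vertices carry smaller labels than children of higher-labeled ones \emph{for all pairs}, not merely for a single consecutive pair, and this global form is what powers the ``preimage of an interval'' step. Everything else is a one-line induction, so I would keep the write-up to the two observations $L_{d+1}=p^{-1}(L_d)$ and ``preimages of intervals under monotone maps are intervals''. As a consistency check I would also note that, because parents carry strictly larger labels, each $L_{d+1}$ lies entirely to the left of $L_d$; hence the $L_d$ tile $\{0,\dots,n\}$ consecutively from the right, and $\depth_T$ is weakly decreasing in the vertex label, matching the tree in \Cref{fig:tree0}.
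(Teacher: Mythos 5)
Your proof is correct and is essentially the paper's: both argue by induction on the depth, and your ``preimage of an interval under the weakly monotone parent map is an interval'' step is exactly the paper's observation that for vertices $i<j<k$ the parents satisfy $a\le b\le c$, so the middle parent inherits depth $d$ from the induction hypothesis and $j$ lands at depth $d+1$. The only point of note is that both arguments rest on the global monotonicity of the parent map, which you correctly identify as the form of the naturally-labeled condition that is actually being used.
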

\begin{proof}
    We use induction on the distance to the root.  Assume that the labels of vertices with distance $d$ to the root are consecutive.  Let $i$ and $k$ be vertices with distance $d+1$ to the root and suppose that there is a vertex $j$ with $i < j < k$.  Let $a$, $b$ and $c$ be the parents of $i$, $j$ and $k$, respectively.  Because the tree is naturally labeled, we must have $a\leq b\leq c$.  Therefore, vertex $b$ must have distance $d$ to the root, which in turn implies that $j$ has distance $d+1$ to the root.
\end{proof}

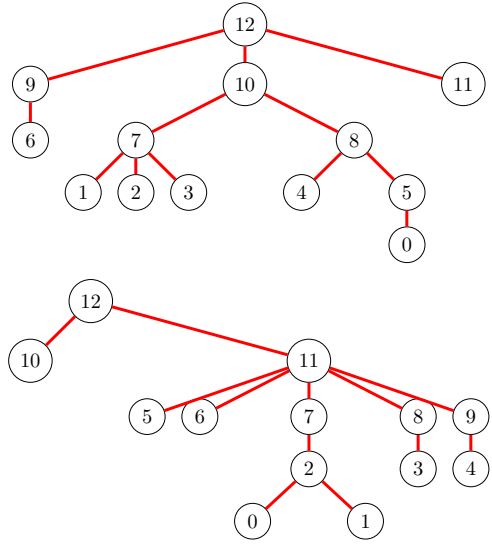
\begin{figure}
  \centering
  \scalebox{0.7}
{ \newcommand{\nodea}{\node[draw,circle] (a) {$12$}
;}\newcommand{\nodeb}{\node[draw,circle] (b) {$9$}
;}\newcommand{\nodec}{\node[draw,circle] (c) {$6$}
;}\newcommand{\noded}{\node[draw,circle] (d) {$10$}
;}\newcommand{\nodee}{\node[draw,circle] (e) {$7$}
;}\newcommand{\nodef}{\node[draw,circle] (f) {$1$}
;}\newcommand{\nodeg}{\node[draw,circle] (g) {$2$}
;}\newcommand{\nodeh}{\node[draw,circle] (h) {$3$}
;}\newcommand{\nodei}{\node[draw,circle] (i) {$8$}
;}\newcommand{\nodej}{\node[draw,circle] (j) {$4$}
;}\newcommand{\nodeba}{\node[draw,circle] (ba) {$5$}
;}\newcommand{\nodebb}{\node[draw,circle] (bb) {$0$}
;}\newcommand{\nodebc}{\node[draw,circle] (bc) {$11$}
;}\begin{tikzpicture}[auto]
\matrix[column sep=.3cm, row sep=.3cm,ampersand replacement=\&]{
         \&         \&         \&         \& \nodea  \&         \&         \&         \&         \\
 \nodeb  \&         \&         \&         \& \noded  \&         \&         \&         \& \nodebc \\
 \nodec  \&         \& \nodee  \&         \&         \&         \& \nodei  \&         \&         \\
         \& \nodef  \& \nodeg  \& \nodeh  \&         \& \nodej  \&         \& \nodeba \&         \\
         \&         \&         \&         \&         \&         \&         \& \nodebb \&         \\
};

\path[ultra thick, red] (b) edge (c)
	(e) edge (f) edge (g) edge (h)
	(ba) edge (bb)
	(i) edge (j) edge (ba)
	(d) edge (e) edge (i)
	(a) edge (b) edge (d) edge (bc);
\end{tikzpicture}}
    \hfil
    \scalebox{0.7}
{ \newcommand{\nodea}{\node[draw,circle] (a) {$12$}
;}\newcommand{\nodeb}{\node[draw,circle] (b) {$10$}
;}\newcommand{\nodec}{\node[draw,circle] (c) {$11$}
;}\newcommand{\noded}{\node[draw,circle] (d) {$5$}
;}\newcommand{\nodee}{\node[draw,circle] (e) {$6$}
;}\newcommand{\nodef}{\node[draw,circle] (f) {$7$}
;}\newcommand{\nodeg}{\node[draw,circle] (g) {$2$}
;}\newcommand{\nodeh}{\node[draw,circle] (h) {$0$}
;}\newcommand{\nodei}{\node[draw,circle] (i) {$1$}
;}\newcommand{\nodej}{\node[draw,circle] (j) {$8$}
;}\newcommand{\nodeba}{\node[draw,circle] (ba) {$3$}
;}\newcommand{\nodebb}{\node[draw,circle] (bb) {$9$}
;}\newcommand{\nodebc}{\node[draw,circle] (bc) {$4$}
;}\begin{tikzpicture}[auto]
\matrix[column sep=.3cm, row sep=.3cm,ampersand replacement=\&]{
         \& \nodea  \&         \&         \&         \&         \&         \&         \&         \\
 \nodeb  \&         \&         \&         \&         \& \nodec  \&         \&         \&         \\
         \&         \& \noded  \& \nodee  \&         \& \nodef  \&         \& \nodej  \& \nodebb \\
         \&         \&         \&         \&         \& \nodeg  \&         \& \nodeba \& \nodebc \\
         \&         \&         \&         \& \nodeh  \&         \& \nodei  \&         \&         \\
};

\path[ultra thick, red] (g) edge (h) edge (i)
	(f) edge (g)
	(j) edge (ba)
	(bb) edge (bc)
	(c) edge (d) edge (e) edge (f) edge (j) edge (bb)
	(a) edge (b) edge (c);
\end{tikzpicture}}
\caption{The naturally labelled trees corresponding to the Kupisch
  series $[5,6,5,4,4,3,3,3,2,3,2,1]$ and to
  $[2,1,5,5,5,6,5,4,3,2,2,1]$.}
\label{fig:tree1}
\end{figure}

An \Dfn{ordered rooted tree} is an unlabeled rooted tree together with a linear order on the children of each vertex.  Let $\mathcal T_n$ be the set of ordered rooted trees on $n+1$ vertices.  We identify $\mathcal L_n$ and $\mathcal T_n$ using the following bijection.
\begin{corollary}
  The map from naturally labeled trees to ordered rooted trees which orders the children of each vertex according to their label, and then removes all labels, is a bijection.
\end{corollary}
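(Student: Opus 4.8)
The plan is to prove the statement by exhibiting the inverse map and checking that the two maps are mutually inverse. Write $\pi\colon\mathcal L_n\to\mathcal T_n$ for the map in the statement, which orders the children of each vertex increasingly by label and then forgets the labels. I would define a candidate inverse $\sigma\colon\mathcal T_n\to\mathcal L_n$ by a canonical traversal: given an ordered rooted tree, list its vertices level by level, starting from the deepest level and ending at the root, reading each level from left to right in the breadth-first order induced by the child orders, and assign the labels $0,1,\dots,n$ in this order. It then suffices to show that $\sigma$ lands in $\mathcal L_n$, that $\pi\circ\sigma=\mathrm{id}$, and that $\sigma\circ\pi=\mathrm{id}$.

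For well-definedness of $\sigma$, I would check the two defining properties of a naturally labeled tree. Since a parent always lies in a strictly shallower level than its child, and shallower levels receive strictly larger labels by construction, the labels increase towards the root. Monotonicity of the parent map (the property used in the proof of \Cref{lem:interval}) follows because for two consecutive labels $i,i+1$ either both vertices lie in the same level, in which case $i$ is weakly to the left of $i+1$ and hence has a weakly smaller parent, or else $i$ is the rightmost vertex of its level and $i+1$ is the leftmost vertex of the next shallower level, so that the parent of $i$ lies in a weakly shallower level and receives a weakly smaller label. The identity $\pi\circ\sigma=\mathrm{id}$ is then immediate: the children of any vertex form a contiguous block at the next deeper level, read from left to right exactly in their given child order, and this is precisely the increasing order of their labels, so ordering children by label recovers the original ordered tree.

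The main work is the identity $\sigma\circ\pi=\mathrm{id}$, equivalently the uniqueness statement that a naturally labeled tree is determined by its underlying ordered rooted tree. Here I would argue in two steps. First, by \Cref{lem:interval} the vertices at a fixed distance from the root form an interval of labels; since every non-root vertex has a strictly larger parent one level closer to the root, these intervals stack so that a deeper level receives a smaller block of labels, and the size of each block equals the number of vertices at that depth, which depends only on the tree shape. Thus the set of labels used at each level is forced. Second, I would determine the order of the labels within each level by induction on the depth, starting from the root. Assuming the label order at a given level agrees with the breadth-first order, monotonicity of the parent map implies that the labels at the next deeper level are grouped by parent, with the groups appearing in the order of their parents, and within a single parent the order of labels is exactly the child order recorded by $\pi$; this is precisely the breadth-first order at that level. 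Hence the labeling is uniquely determined, which gives $\sigma\circ\pi=\mathrm{id}$ and completes the proof.

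The step I expect to be the main obstacle is the within-level uniqueness in the last paragraph: one has to transfer monotonicity of the parent map, which is phrased for consecutive labels across the whole tree, into a statement about a single level, using that each level is an interval so that consecutive labels inside a level can be chained. Everything else is a routine verification once the canonical traversal $\sigma$ has been set up.
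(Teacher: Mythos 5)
Your proof is correct. The paper offers no proof of this corollary at all---it is stated as an immediate consequence of the definition and of \Cref{lem:interval}---and your argument, constructing the inverse as the reverse level-order labeling (deepest level first, each level left to right) and deducing uniqueness from \Cref{lem:interval} together with weak monotonicity of the parent map, is precisely the reasoning the paper leaves implicit, written out in full. One remark in favour of your formulation: the property you rely on, and verify for $\sigma$, is that the map sending a non-root label $i$ to the label of its parent is weakly increasing; this is indeed what the paper uses in its proofs of \Cref{lem:interval} and of the bijectivity of $\tau$, and it is the right reading of ``naturally labeled''. The literal condition ``the children of $i$ are all smaller than the children of $i+1$'' does not chain across vertices without children (so on its own it would not even make $\pi$ injective), whereas your verification on consecutive labels concerns the total parent function and therefore does chain. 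Your handling of the within-level uniqueness---grouping a level by parents, ordering the groups by the induction hypothesis on the shallower level, and ordering within a group by the child order recorded by $\pi$---is exactly the step that needs care, and you carry it out correctly.
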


Recall that a \Dfn{sibling} of a vertex of a rooted tree is a vertex having the same parent.
\begin{lemma}\label{lem:treedepth}
    Let $A \in \mathcal A_n$ and let $T = \tau(A) \in \mathcal T_n$.
    Fix $g\ge 0$.
    Then
    \[
      \gldim(A) \leq g
    \]
    if and only if there are no two siblings $i < j$ such that the subtree rooted at $i$ has depth $\lfloor\frac g 2\rfloor$ or larger, and the subtree rooted at $j$ has depth $\lceil\frac g 2\rceil$ or larger.
\end{lemma}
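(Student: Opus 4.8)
The plan is to work entirely with the formula $\gldim(A)=\max_{0\le i<n}\dist_T(i,i+1)-1$ from \Cref{thm:projdim}, so that $\gldim(A)\le g$ becomes equivalent to $\dist_T(i,i+1)\le g+1$ for all $i$. The first step is to pin down the precise shape of the natural labeling: by \Cref{lem:interval} the vertices of each depth form a label-interval, deeper levels carrying smaller labels, and within a fixed depth the labels increase from left to right; moreover the children of any vertex, and hence the vertices of any subtree at a fixed depth, form a contiguous block. Thus the labels are exactly the level order, read by decreasing depth and, inside each level, from left to right. Consequently two labels $i,i+1$ are consecutive in exactly one of two ways: either they lie at the same depth and are horizontally adjacent, or $i$ is the globally rightmost vertex at some depth $d$ and $i+1$ is the globally leftmost vertex at depth $d-1$.

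The technical heart is a structural description of $\dist_T(i,i+1)$. Writing $w$ for the lowest common ancestor of $i$ and $i+1$, $p$ for the child of $w$ above $i$, $q$ for the child of $w$ above $i+1$, and $\alpha=\depth_T(i)-\depth_T(w)-1$, $\beta=\depth_T(i+1)-\depth_T(w)-1$ for the depths of $i,i+1$ inside the subtrees rooted at $p,q$, one always has $\dist_T(i,i+1)=\alpha+\beta+2$, while the subtree rooted at $p$ has depth at least $\alpha$ and the subtree rooted at $q$ has depth at least $\beta$. In the same-depth case $\alpha=\beta$ and $p$ lies to the left of $q$. In the cross-level case $\alpha=\beta+1$ and, crucially, $p$ lies to the \emph{right} of $q$: were $p$ to the left, the parent of $i$ would be a vertex of the subtree at $p$ at depth $d-1$ lying to the left of $i+1$, contradicting that $i+1$ is the globally leftmost vertex at depth $d-1$. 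Establishing this orientation, and that the two subtrees genuinely reach depths $\alpha$ and $\beta$, is the part demanding the most care.

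For the forward implication I argue contrapositively. If $\gldim(A)>g$ then some consecutive pair satisfies $\dist_T(i,i+1)\ge g+2$, hence $\alpha+\beta\ge g$. In the same-depth case $\alpha=\beta\ge\lceil g/2\rceil$, so both siblings $p,q$ have subtree depth at least $\lceil g/2\rceil$. In the cross-level case $\beta\ge\lfloor g/2\rfloor$ and $\alpha=\beta+1\ge\lceil g/2\rceil$, and since the deeper endpoint $i$ sits in the right subtree, the left sibling $q$ has depth at least $\lfloor g/2\rfloor$ and the right sibling $p$ has depth at least $\lceil g/2\rceil$. Either way we produce a forbidden pair of siblings, so the absence of such a pair forces $\gldim(A)\le g$.

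For the converse I build a long consecutive pair from a forbidden pair of siblings $u<v$ with common parent $w$ at depth $d_w$, where the left subtree has depth at least $L:=\lfloor g/2\rfloor$ and the right subtree depth at least $R:=\lceil g/2\rceil$. Set $d:=d_w+1+R$. If $g$ is even then $L=R$ and both subtrees reach depth $d$; among the children of $w$ whose subtree reaches depth $d$ (which include $u$ and $v$) pick two that are adjacent in this restricted order and take the rightmost depth-$d$ vertex of the left one and the leftmost depth-$d$ vertex of the right one. These are horizontally adjacent at depth $d$, hence consecutive labels, and lie in distinct child-subtrees of $w$, so their distance is $2(d-d_w)=g+2$. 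If $g$ is odd then $R=L+1$; the subtree at $v$ reaches depth $d$ while the subtree at $u$ reaches depth $d-1$, so both levels are non-empty. Consider the consecutive pair formed by the globally rightmost vertex $r_d$ at depth $d$ and the globally leftmost vertex $\ell_{d-1}$ at depth $d-1$. Since $\ell_{d-1}$ lies at or to the left of a depth-$(d-1)$ vertex of the subtree at $u$, and $r_d$ lies at or to the right of a depth-$d$ vertex of the subtree at $v$, their lowest common ancestor is $w$ or an ancestor of $w$, so its depth is at most $d_w$ and $\dist_T(r_d,\ell_{d-1})\ge 2(d-d_w)-1=g+2$. In both parities we obtain $\gldim(A)\ge g+1$, completing the equivalence. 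The main obstacle is precisely this converse construction: arranging that the two chosen endpoints are genuinely \emph{consecutive} labels rather than merely a pair realising a large distance, and matching the floor/ceiling split to the two parities.
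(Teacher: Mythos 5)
Your proof is correct and follows essentially the same route as the paper: both reduce to the formula $\gldim(A)=\max_i\dist_T(i,i+1)-1$, exploit the level-order structure of natural labelings via \Cref{lem:interval}, obtain the forbidden siblings as the two children of the lowest common ancestor on the path from $i$ to $i+1$, and conversely build a far-apart consecutive pair by choosing extremal vertices at the relevant depths. Your treatment is in fact slightly more explicit than the paper's on the orientation in the cross-level case (that the deeper endpoint lies in the \emph{right} sibling's subtree), which is needed to match the asymmetric floor/ceiling condition.
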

\begin{proof}
  As $\gldim(A) = \max_{0\leq i < n}\dist_T(i, i+1) - 1$, we have that
  $\gldim(A) \leq g$ if and only if $\dist_T(i,i+1) \leq g+1$ for all $0 \leq i < n$.

  Assume first that there exists a vertex~$i$ with $\dist_T(i, i+1) > g+1$.
  Let~$a < b$ be the two children of the vertex closest to the root on the unique path from~$i$ to $i+1$.  By~\Cref{lem:interval}, we have $\depth_T(i+1)\in\{\depth_T(i), \depth_T(i)-1\}$.  If $\depth_T(i+1) = \depth_T(i)$ we obtain that the two subtrees rooted at~$a$ and~$b$ each have depth at least $\lceil\frac g 2\rceil$.
  Otherwise, if  $\depth_T(i+1) = \depth_T(i)-1$, the subtree rooted at $b$ has depth at least $\lceil\frac g 2\rceil$ and the subtree rooted at $a$ has depth at least $\lfloor\frac g 2\rfloor$.

  Conversely, suppose that $T$ has two siblings $a < b$ such that the subtree rooted at $a$ has depth $\lfloor\frac g 2\rfloor$ or larger and the subtree rooted at $b$ has depth $\lceil\frac g 2\rceil$ or larger.  Suppose first that $g$ is even and therefore $\lfloor\frac g 2\rfloor = \lceil\frac g 2\rceil$.  Let $i$ be the largest vertex at depth $\frac g 2$ of the subtree rooted at $a$.  Then, since the subtree rooted at $b$ also has depth at least $\frac g 2$, \Cref{lem:interval} implies that the vertex $i+1$ has the same depth as vertex $i$.  In particular, $i+1$ cannot be in the subtree rooted at $a$.  Therefore, there is a sibling $c > a$ of $a$ such that the subtree rooted at $c$ contains $i+1$, which implies that $\dist_T(i, i+1) > g + 1$.  Finally, suppose that $g$ is odd.  Let $i$ be the largest vertex at depth $\lceil\frac g 2\rceil$ of the subtree rooted at $b$.  Then either $i+1$ also has depth $\lceil\frac g 2\rceil$, or it has depth $\lfloor\frac g 2\rfloor$.  In the former case it cannot be in the subtree rooted at $b$ because $i$ was chosen to be maximal.  In the latter case, it cannot be in the subtree rooted at $b$, because the subtree rooted at $a$ has depth $\lfloor\frac g 2\rfloor$.  Thus, in both cases $\dist_T(i, i+1) > g + 1$.
\end{proof}

For example, for the first tree in \Cref{fig:tree1} we have
$\max_{0\leq i < n}\dist_T(i, i+1)=\dist_T(0, 1) = 5$, whence the corresponding Nakayama algebra has global dimension $4$.  Accordingly, there are no
two siblings~$a$ and~$b$ such that the subtrees rooted at~$a$ and~$b$
both have depth at least~$2$.  For the second tree we have
$\max_{0\leq i < n}\dist_T(i, i+1)=\dist_T(2,3) = 4$.  Indeed, there are no two siblings $a < b$ such that the subtree rooted at~$a$ has depth at least~$1$ and the subtree rooted at~$b$ has depth at least~$2$.

To prove \Cref{thm:equidistributed} we provide for every $g\in\NN$ a
bijection between the set
\[
  \big\{A\in\mathcal A_n \mid \gldim(A) \leq g\big\}
\]
and the set of Dyck paths of semilength $n$ and height at most $g+1$.
We then deduce \Cref{thm:equidistributed} by an inclusion-exclusion argument.
The bijection essentially consists of a decomposition of ordered rooted trees and a
corresponding decomposition of Dyck paths.
\begin{lemma}
\label{decompositionTree}
  For any $g\ge 0$, there is a bijection mapping an ordered rooted tree $T$ with
  $n+1$ vertices and $\max_{0\leq i < n}\dist_T(i, i+1) \leq g+1$ to a quadruple $(m, L, R, M)$, such that
  \begin{itemize}
  \item $m$ is a non-negative integer,
  \item $L=(L_1,\dots,L_m)$ is an $m$-tuple of ordered rooted trees
    of depth at most $\lfloor\frac g 2\rfloor$,
  \item $R=(R_1,\dots,R_m)$ is an $m$-tuple of ordered rooted trees
    of depth at most $\lceil\frac g 2\rceil$, and
  \item $M$ is an ordered rooted tree of depth at most
    $\lceil\frac g 2\rceil$ if $m=0$, and depth
    $\lceil\frac g 2\rceil$ otherwise,
  \end{itemize}
  and the total number of vertices of the trees in $L$, $R$ and $M$
  is $n+1+m$.
\end{lemma}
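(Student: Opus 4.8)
The plan is to construct the bijection explicitly by peeling off a canonical root-to-leaf path in~$T$. Throughout I write $a=\lfloor g/2\rfloor$ and $b=\lceil g/2\rceil$, so that $a\le b$ and $a+b=g$. First I would restate the hypothesis $\max_{0\le i<n}\dist_T(i,i+1)\le g+1$ in the more usable form supplied by \Cref{lem:treedepth} (using $\gldim=\max\dist-1$): the tree $T$ contains no two siblings $i<j$ such that the subtree rooted at~$i$ has depth at least~$a$ and the subtree rooted at~$j$ has depth at least~$b$. This \emph{sibling condition} is what drives the whole argument.

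Next I would define the \emph{spine} $v_0,v_1,\dots,v_m$ of~$T$ greedily: set $v_0$ to be the root, and as long as the current vertex~$v_t$ has a child whose subtree has depth at least~$b$, let $v_{t+1}$ be the \emph{rightmost} such child; stop at the first vertex~$v_m$ all of whose children have subtree depth $<b$. The number~$m$ is the length of the spine. The decomposition is then read off the spine: $M$ is the subtree rooted at~$v_m$, and for each $1\le i\le m$ the tree $R_i$ is rooted at the spine vertex $v_{i-1}$ and keeps only the children of $v_{i-1}$ lying to the right of~$v_i$ together with their subtrees, while $L_i$ is a freshly created root whose children are the subtrees of the children of~$v_{i-1}$ lying to the left of~$v_i$.

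The key step is to check that the sibling condition forces exactly the prescribed depth bounds. At a spine vertex $v_{i-1}$ the child $v_i$ has depth $\ge b$ and is the rightmost such child; applying the sibling condition with $v_i$ playing the role of the right sibling shows every left sibling has depth $<a$, so $L_i$ has depth $\le a$, and the maximality in the choice of~$v_i$ forces every right sibling to have depth $<b$, so $R_i$ has depth $\le b$. For~$M$: when $m>0$ the vertex $v_m$ was reached precisely because it is a child of depth $\ge b$, while all of its own children have depth $<b$, so $M$ has depth \emph{exactly}~$b$; when $m=0$ the spine is trivial, $M=T$, and the stopping rule says the root has no child of depth $\ge b$, so $M$ has depth $\le b$. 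Counting vertices, the $m$ new roots of $L_1,\dots,L_m$ are the only added vertices, yielding $n+1+m$ in total, as required.

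Finally I would prove bijectivity by describing the inverse map: starting from $M$ rooted at $v_m$, reattach for $i=m,m-1,\dots,1$ the subtrees recorded as the children of the root of $L_i$ as the left children of $v_{i-1}$, the current tree as the child $v_i$, and the children recorded in $R_i$ on the right. The crucial auxiliary observation is that in any ordered rooted tree of depth at most~$d$ only the root can have subtree depth $\ge d$; consequently every tree of depth $\le a$ or $\le b$ automatically satisfies the sibling condition, which simultaneously shows that the recorded pieces are genuinely unconstrained (any trees of the prescribed depths arise) and that the reconstructed tree again satisfies the sibling condition, since the only nontrivial sibling groups are those at the spine vertices, handled exactly as above. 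I expect the main obstacle to be the bookkeeping at the spine vertices: verifying that the greedy ``rightmost child of depth $\ge b$'' rule is genuinely inverse to the reattachment and that $M$ has depth exactly~$b$ precisely when $m>0$, since this is where the asymmetric floor/ceiling thresholds and the off-by-one built into the sibling condition must be matched up without slippage.
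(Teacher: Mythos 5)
Your proposal is correct and follows essentially the same route as the paper: you iteratively peel off the unique (rightmost) child whose subtree has depth at least $\lceil g/2\rceil$, split its siblings into a left part $L_i$ and a right part $R_i$, and derive the depth bounds from the sibling reformulation of the hypothesis given by \Cref{lem:treedepth}, with the same vertex count $n+1+m$ coming from the $m$ duplicated spine roots. Your treatment of the inverse map is somewhat more explicit than the paper's, which simply asserts that the algorithm can be reversed, but the underlying decomposition is identical.
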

\begin{proof}
  Let $T\in\mathcal T_n$ be a tree with
  $\max_{0\leq i < n}\dist_T(i, i+1) \leq g+1$.  We decompose $T$ as follows:
  \begin{enumerate}
  \item let $m=0$.
  \item if the depth of $T$ is at most $\lceil\frac g 2\rceil$, let
    $M=T$ and stop.
  \item otherwise, by \Cref{lem:treedepth}, there is a unique child
    $a$ of the root of $T$ such that the subtree rooted at $a$ has
    depth at least $\lceil\frac g 2\rceil$.  Then,
    \begin{enumerate}
    \item increase~$m$ by~$1$,
    \item let $L_m$ be the subtree of $T$ obtained by deleting $a$
      and all its siblings to the right,
    \item let $R_m$ be the subtree of $T$ obtained by deleting $a$
      and all its siblings to the left, and
    \item let $T$ be the subtree of $T$ rooted at $a$.
    \end{enumerate}
  \item goto (2).
  \end{enumerate}
  \Cref{lem:treedepth} ensures that all trees in $L$ and $R$, as well as $M$ have the correct depth.
  T has $n+1$ vertices.  Each time we execute step~(3), one vertex -- the parent of $a$ -- is added to both $L_m$ and $R_m$.  Every other vertex appears in exactly one of the trees in $L$, $R$ and $M$.  Thus, the total number of vertices is $n+1+m$.  It is clear that this algorithm can be reversed to obtain $T$ from $(m,L,R,M)$.
\end{proof}

For example, applying this procedure to the first tree in
\Cref{fig:tree1} with $g=4$, we obtain $m=2$,

\[
L_1 = \scalebox{0.7}{ \newcommand{\nodea}{\node[draw,circle] (a) {$12$}
;}\newcommand{\nodeb}{\node[draw,circle] (b) {$9$}
;}\newcommand{\nodec}{\node[draw,circle] (c) {$6$}
;}\begin{tikzpicture}[baseline=(a.base)]
\matrix[column sep=.3cm, row sep=.3cm,ampersand replacement=\&]{
 \nodea  \\
 \nodeb  \\
 \nodec  \\
};
\path[ultra thick, red] (b) edge (c)
	(a) edge (b);
      \end{tikzpicture}}\quad
    L_2 = \scalebox{0.7}{\hspace{-1cm}
      \newcommand{\nodea}{\node[draw,circle] (a) {$10$}
;}\newcommand{\nodeb}{\node[draw,circle] (b) {$7$}
;}\newcommand{\nodec}{\node[draw,circle] (c) {$1$}
;}\newcommand{\noded}{\node[draw,circle] (d) {$2$}
;}\newcommand{\nodee}{\node[draw,circle] (e) {$3$}
;}\begin{tikzpicture}[baseline=(a.base)]
\matrix[column sep=.3cm, row sep=.3cm,ampersand replacement=\&]{
         \& \nodea  \&         \\
         \& \nodeb  \&         \\
 \nodec  \& \noded  \& \nodee  \\
};
\path[ultra thick, red] (b) edge (c) edge (d) edge (e)
	(a) edge (b);
      \end{tikzpicture}}\quad
R_1 = \scalebox{0.7}{ \newcommand{\nodea}{\node[draw,circle] (a) {$12$}
;}\newcommand{\nodeb}{\node[draw,circle] (b) {$11$}
;}\begin{tikzpicture}[baseline=(a.base)]
\matrix[column sep=.3cm, row sep=.3cm,ampersand replacement=\&]{
 \nodea  \\
 \nodeb  \\
};
\path[ultra thick, red] (a) edge (b);
\end{tikzpicture}}\quad
R_2 = \scalebox{0.7}{ \newcommand{\nodea}{\node[draw,circle] (a) {$10$}
;}\begin{tikzpicture}[baseline=(a.base)]
\matrix[column sep=.3cm, row sep=.3cm,ampersand replacement=\&]{
 \nodea  \\
};
\end{tikzpicture}}\text{ and }
M = \scalebox{0.7}{\hspace{-1cm} \newcommand{\nodea}{\node[draw,circle] (a) {$8$}
;}\newcommand{\nodeb}{\node[draw,circle] (b) {$4$}
;}\newcommand{\nodec}{\node[draw,circle] (c) {$5$}
;}\newcommand{\noded}{\node[draw,circle] (d) {$0$}
;}\begin{tikzpicture}[baseline=(a.base)]
\matrix[column sep=.3cm, row sep=.3cm,ampersand replacement=\&]{
         \& \nodea  \&         \\
 \nodeb  \&         \& \nodec  \\
         \&         \& \noded  \\
};
\path[ultra thick, red] (c) edge (d)
	(a) edge (b) edge (c);
\end{tikzpicture}}
\]

Applying the procedure to the second tree in \Cref{fig:tree1} with
$g=3$, we obtain $m=2$,
\[
  L_1 = \scalebox{0.7}{ \newcommand{\nodea}{\node[draw,circle] (a) {$12$}
;}\newcommand{\nodeb}{\node[draw,circle] (b) {$10$}
;}\begin{tikzpicture}[baseline=(a.base)]
\matrix[column sep=.3cm, row sep=.3cm,ampersand replacement=\&]{
 \nodea  \\
 \nodeb  \\
};
\path[ultra thick, red] (a) edge (b);
\end{tikzpicture}}\quad
L_2 = \scalebox{0.7}{\hspace{-1cm} \newcommand{\nodea}{\node[draw,circle] (a) {$11$}
;}\newcommand{\nodeb}{\node[draw,circle] (b) {$5$}
;}\newcommand{\nodec}{\node[draw,circle] (c) {$6$}
;}\begin{tikzpicture}[baseline=(a.base)]
\matrix[column sep=.3cm, row sep=.3cm,ampersand replacement=\&]{
         \& \nodea  \&         \\
 \nodeb  \&         \& \nodec  \\
};
\path[ultra thick, red] (a) edge (b) edge (c);
\end{tikzpicture}}\quad
R_1 = \scalebox{0.7}{ \newcommand{\nodea}{\node[draw,circle] (a) {$12$}
;}\begin{tikzpicture}[baseline=(a.base)]
\matrix[column sep=.3cm, row sep=.3cm,ampersand replacement=\&]{
 \nodea  \\
};
\end{tikzpicture}}\quad
R_2 = \scalebox{0.7}{\hspace{-1cm} \newcommand{\nodea}{\node[draw,circle] (a) {$11$}
;}\newcommand{\nodeb}{\node[draw,circle] (b) {$8$}
;}\newcommand{\nodec}{\node[draw,circle] (c) {$3$}
;}\newcommand{\noded}{\node[draw,circle] (d) {$9$}
;}\newcommand{\nodee}{\node[draw,circle] (e) {$4$}
;}\begin{tikzpicture}[baseline=(a.base)]
\matrix[column sep=.3cm, row sep=.3cm,ampersand replacement=\&]{
         \& \nodea  \&         \\
 \nodeb  \&         \& \noded  \\
 \nodec  \&         \& \nodee  \\
};
\path[ultra thick, red] (b) edge (c)
	(d) edge (e)
	(a) edge (b) edge (d);
\end{tikzpicture}}\text{\hspace{-6pt}and }
M = \scalebox{0.7}{\hspace{-1cm} \newcommand{\nodea}{\node[draw,circle] (a) {$7$}
;}\newcommand{\nodeb}{\node[draw,circle] (b) {$2$}
;}\newcommand{\nodec}{\node[draw,circle] (c) {$0$}
;}\newcommand{\noded}{\node[draw,circle] (d) {$1$}
;}\begin{tikzpicture}[baseline=(a.base)]
\matrix[column sep=.3cm, row sep=.3cm,ampersand replacement=\&]{
         \& \nodea  \&         \\
         \& \nodeb  \&         \\
 \nodec  \&         \& \noded  \\
};
\path[ultra thick, red] (b) edge (c) edge (d)
	(a) edge (b);
\end{tikzpicture}}
\]

\begin{lemma}
\label{decompositionDyckPath}
  There is a bijection between Dyck paths of semilength $n$ and
  height at most $g+1$ and quadruples $(m, L, R, M)$, such that
  \begin{itemize}
  \item $m$ is a non-negative integer,
  \item $L=(L_1,\dots,L_m)$ is an $m$-tuple of Dyck paths of height
    at most $\lfloor\frac g 2\rfloor$,
  \item $R=(R_1,\dots,R_m)$ is an $m$-tuple of Dyck paths of height
    at most $\lceil\frac g 2\rceil$, and
  \item $M$ is a Dyck path of height at most $\lceil\frac g 2\rceil$
    if $m=0$ and height $\lceil\frac g 2\rceil$ otherwise,
  \end{itemize}
  and the sum of the semilengths of the Dyck paths in $L$, $R$ and
  $M$ is $n-m$.
\end{lemma}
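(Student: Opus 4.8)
The plan is to construct an explicit bijection mirroring the one in \Cref{decompositionTree}, so that composing the two decompositions --- together with the classical depth-preserving bijection between ordered rooted trees on $n+1$ vertices and Dyck paths of semilength $n$, which sends the number of vertices to the semilength and the depth of a subtree to the height --- identifies the two sets of quadruples and yields \Cref{thm:equidistributed}. Write $\ell=\lfloor g/2\rfloor$ and $h=\lceil g/2\rceil$, so that $\ell+h=g$ and the ambient height bound equals $\ell+h+1$. I would read off the quadruple $(m,L,R,M)$ from a path $D$ by a recursive peeling analogous to the tree algorithm: as long as the current path has height larger than $h$, I peel off a left piece $L_i$ of height at most $\ell$ and a right piece $R_i$ of height at most $h$ surrounding a distinguished high excursion, increase $m$ by one, and recurse on the reduced middle path; once the height has dropped to at most $h$, the remaining path is declared to be $M$. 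Each peeling step should remove exactly one up--down pair, which both accounts for the bookkeeping $\sum_i|L_i|+\sum_i|R_i|+|M|=n-m$ on semilengths and is arranged so that $M$ has height exactly $h$ as soon as $m\ge 1$.

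The first task is to pin down this peeling geometrically. The two natural candidates are a first-return (arch) decomposition --- isolating the leftmost arch whose interior still reaches height at least $h$ --- or a horizontal cut of $D$ along the median level, folding the portion lying in the upper strip between heights $h$ and $\ell+h+1$ downward so that it supplies the pieces of height at most $h$, while the portion in the lower strip between heights $0$ and $\ell$ supplies the pieces of height at most $\ell$. I would then verify that the inverse assembly --- reinserting the $L_i$ and $R_i$ around a nested up--down pair and capping off with $M$ --- is well defined, is inverse to the peeling, and returns all pieces with the claimed height bounds.

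The main obstacle is the height constraint on the reassembled path. In the tree setting the hypothesis $\max_{0\le i<n}\dist_T(i,i+1)\le g+1$, via \Cref{lem:treedepth}, forces a \emph{unique} deep child at each step and pins the left and right depths to $\ell$ and $h$; on the Dyck side only the total height is controlled, and a naive nesting assembly would let the height grow like $m+h$, exceeding $\ell+h+1$ as soon as $m>\ell+1$. Since $m$ can be as large as $n-1$, the pieces $L_i$ and $R_i$ must be placed so that they do \emph{not} stack, and arranging this --- so that a single distinguished subexcursion carries the path into the top strip while the remaining pieces sit at controlled heights --- is the delicate point. I expect the cleanest way to guarantee correctness is a folding argument realizing the continuant addition formula $U_{a+b}=U_aU_b-x\,U_{a-1}U_{b-1}$ for the denominators $U_k$ of the bounded-height generating functions $F_k=U_k/U_{k+1}$ (where $U_{k+1}=U_k-xU_{k-1}$, $U_{-1}=0$, $U_0=1$), applied with $(a,b)=(\ell+1,h)$ to $U_{g+1}$ and $(a,b)=(\ell+1,h+1)$ to $U_{g+2}$.

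As a fallback that already suffices for \Cref{thm:equidistributed}, I would establish the numerical identity directly: from $F_k=1/(1-xF_{k-1})$ one obtains $F_{g+1}=F_h+(F_h-F_{h-1})\,\frac{xF_\ell F_h}{1-xF_\ell F_h}$, whose terms match the claimed enumeration of quadruples (the factor $F_\ell^m$ counting the $L_i$, the factor $F_h^m$ the $R_i$, the factor $x^m$ the semilength lost during peeling, and the factor $F_h-F_{h-1}$ an $M$ of height exactly $h$ when $m\ge 1$, with the lone term $F_h$ covering the case $m=0$). Combined with \Cref{decompositionTree} and the depth-preserving correspondence on the components, this gives the equinumerosity the theorem needs, after which the explicit bijection can be recovered by tracking this identity through the two decompositions.
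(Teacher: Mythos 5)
Your proposal stops short of a proof: it lists two candidate constructions without committing to either, and the one the paper actually uses is your second candidate, the horizontal cut. Writing $h=\lceil g/2\rceil$ and $\ell=\lfloor g/2\rfloor$, the paper removes from $D$ every step crossing the line $y=h+\tfrac12$; the $m$ excursions strictly above that line live in a strip of thickness $\ell$ and become the $L_i$, the intermediate returns below the line become (after reversal) the $R_i$ of height at most $h$, and the initial and final segments $A$ and $B$ are spliced into $M$ and $R_1$ --- cutting $A$ at its first peak of height $h$, which is precisely what forces $M$ to have height exactly $h$ when $m\ge 1$, a point your sketch does not address. Note also that you have the two strips swapped: the upper strip has thickness $\ell$ and yields the pieces of height at most $\ell$ (the $L_i$), while the lower strip has thickness $h$ and yields the $R_i$ and $M$.

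More importantly, the ``main obstacle'' you identify is not an obstacle for this construction. The pieces are not nested: the $L_i$ are the successive excursions of $D$ above the cut line and the $R_i$ the successive returns below it, so reassembly concatenates them left to right, separated by the $2m$ reinserted crossing steps, and the height of the result is at most $\max\{h,\,(h+1)+\ell\}=g+1$ independently of $m$. The stacking problem you describe would arise only for your first candidate (iterated first-return peeling), which is indeed the wrong choice; recognizing that the cut is a single global operation rather than a recursion is exactly the missing idea. Your fallback identity $F_{g+1}=F_h+(F_h-F_{h-1})\frac{xF_\ell F_h}{1-xF_\ell F_h}$ is correct and would give the equinumerosity needed for \Cref{thm:equidistributed} (and hence, abstractly, the existence of a bijection), but as written it is asserted rather than derived, so on its own it does not close the gap either.
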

\begin{proof}
  Let $D$ be a Dyck path of semilength $n$ and height at most $g+1$.
  If $D$ has height $\lceil\frac g 2\rceil$ or less, $m=0$.
  Otherwise,
  \begin{enumerate}
  \item split $D$, regarded as a sequence of up and down steps, into
    $2m+1$ (possibly empty) subsequences
    $A, L_1, \bar R_2,L_2,\bar R_3, L_3, \dots, \bar R_{m}, L_m, B$, by removing
    all steps from height $\lceil\frac g 2\rceil$ to height
    $\lceil\frac g 2\rceil + 1$ and from height
    $\lceil\frac g 2\rceil + 1$ to height $\lceil\frac g 2\rceil$.
    Note that $L_k$ is a Dyck path of height at most
    $\lfloor\frac g 2\rfloor$ for $1\leq k\leq m$.
  \item for $2\leq k\leq m$, let $R_k$ be the Dyck path of height at
    most $\lceil\frac g 2\rceil$ given by reversing $\bar R_k$.
  \item let $P$ be the prefix of $A$ consisting of all steps until
    the first peak of height $\lceil\frac g 2\rceil$ is reached, and
    let $R_1$ be the reversal of what remains of $A$. Let $M$ be the
    Dyck path obtained by concatenating $P$ with the reversal of $B$.
  \end{enumerate}
  $D$ consists of $n$ up and $n$ down steps. In step (1), we remove $m$ up steps and $m$ down steps, all other steps are distributed between the Dyck paths $L$, $R$ and $M.$ Hence, the sum of the semilengths of these Dyck paths is indeed $n-m.$

  \end{proof}

Applying the procedure to the Dyck path
\[
  \begin{tikzpicture}[scale=\textwidth/25cm]
    \draw[dotted] (0, 0) grid (24, 5);
    \draw[color=red, thick, dashed] (0, 2.5) -- (24, 2.5);
    \draw[rounded corners=1, color=black, line width=2] (0, 0) -- (1, 1) -- (2, 0) -- (3, 1) -- (4, 2) -- (5, 1) -- (6, 2) -- (7, 3) -- (8, 4) -- (9, 5) -- (10, 4) -- (11, 3) -- (12, 2) -- (13, 3) -- (14, 4) -- (15, 5) -- (16, 4) -- (17, 5) -- (18, 4) -- (19, 5) -- (20, 4) -- (21, 3) -- (22, 2) -- (23, 1) -- (24, 0);
    \draw [thick, decoration={brace, mirror, raise=6pt}, decorate] (0.1, 0) -- (3.9, 0)
    node [pos=0.5,anchor=south,yshift=-30pt] {$P$};
    \draw [thick, decoration={brace, mirror, raise=6pt}, decorate] (4.1, 0) -- (5.9, 0)
    node [pos=0.5,anchor=south,yshift=-30pt] {$\bar R_1$};
    \draw [thick, decoration={brace, mirror, raise=6pt}, decorate] (7.1, 0) -- (10.9, 0)
    node [pos=0.5,anchor=south,yshift=-30pt] {$L_1$};
    \draw [thick, decoration={brace, mirror, raise=6pt}, decorate] (12, 0) -- (12, 0)
    node [pos=0.5,anchor=south,yshift=-30pt] {$\bar R_2$};
    \draw [thick, decoration={brace, mirror, raise=6pt}, decorate] (13.1, 0) -- (20.9, 0)
    node [pos=0.5,anchor=south,yshift=-30pt] {$L_2$};
    \draw [thick, decoration={brace, mirror, raise=6pt}, decorate] (22.1, 0) -- (23.9, 0)
    node [pos=0.5,anchor=south,yshift=-30pt] {$B$};
  \end{tikzpicture}
\]
and $g=4$, we obtain
\begin{align*}
  L_1 &= \begin{tikzpicture}[scale=\textwidth/25cm]
    \draw[dotted] (0, 0) grid (4, 2);
    \draw[rounded corners=1, color=black, line width=2] (0, 0) -- (1, 1) -- (2, 2) -- (3, 1) -- (4, 0);
  \end{tikzpicture},%
  &
    L_2 &= \begin{tikzpicture}[scale=\textwidth/25cm]
      \draw[dotted] (0, 0) grid (8, 2);
      \draw[rounded corners=1, color=black, line width=2] (0, 0) -- (1, 1) -- (2, 2) -- (3, 1) -- (4, 2) -- (5, 1) -- (6, 2) -- (7, 1) -- (8, 0);
    \end{tikzpicture},\\
  R_1 &= \begin{tikzpicture}[scale=\textwidth/25cm]
    \draw[dotted] (0, 0) grid (2, 1);
    \draw[rounded corners=1, color=black, line width=2] (0, 0) -- (1, 1) -- (2, 0);
  \end{tikzpicture},%
  &
    R_2 &= \emptyset,\\
  M &=
      \begin{tikzpicture}[scale=\textwidth/25cm]
        \draw[dotted] (0, 0) grid (6, 2);
        \draw[rounded corners=1, color=black, line width=2] (0, 0) -- (1, 1) -- (2, 0) -- (3, 1) -- (4, 2) -- (5, 1) -- (6, 0);
      \end{tikzpicture}
\end{align*}

Applying the procedure to the Dyck path
\[
  \begin{tikzpicture}[scale=\textwidth/25cm]
    \draw[dotted] (0, 0) grid (24, 4);
    \draw[color=red, thick, dashed] (0, 2.5) -- (24, 2.5);
    \draw[rounded corners=1, color=black, line width=2] (0, 0) -- (1, 1) -- (2, 2) -- (3, 3) -- (4, 4) -- (5, 3) -- (6, 2) -- (7, 1) -- (8, 0) -- (9, 1) -- (10, 2) -- (11, 1) -- (12, 0) -- (13, 1) -- (14, 2) -- (15, 3) -- (16, 4) -- (17, 3) -- (18, 4) -- (19, 3) -- (20, 2) -- (21, 1) -- (22, 0) -- (23, 1) -- (24, 0);
  \end{tikzpicture}
\]
and $g=3$, we obtain
\begin{align*}
  L_1 &=\begin{tikzpicture}[scale=\textwidth/25cm]
    \draw[dotted] (0, 0) grid (2, 1);
    \draw[rounded corners=1, color=black, line width=2] (0, 0) -- (1, 1) -- (2, 0);
  \end{tikzpicture},%
  &
    L_2 &= \begin{tikzpicture}[scale=\textwidth/25cm]
      \draw[dotted] (0, 0) grid (4, 1);
      \draw[rounded corners=1, color=black, line width=2] (0, 0) -- (1, 1) -- (2, 0) -- (3, 1) -- (4, 0);
    \end{tikzpicture},\\
  R_1 &= \emptyset,%
  &
    R_2 &= \begin{tikzpicture}[scale=\textwidth/25cm]
      \draw[dotted] (0, 0) grid (8, 2);
      \draw[rounded corners=1, color=black, line width=2] (0, 0) -- (1, 1) -- (2, 2) -- (3, 1) -- (4, 0) -- (5, 1) -- (6, 2) -- (7, 1) -- (8, 0);
    \end{tikzpicture},\\
  M &= \begin{tikzpicture}[scale=\textwidth/25cm]
    \draw[dotted] (0, 0) grid (6, 2);
    \draw[rounded corners=1, color=black, line width=2] (0, 0) -- (1, 1) -- (2, 2) -- (3, 1) -- (4, 2) -- (5, 1) -- (6, 0);
  \end{tikzpicture}
\end{align*}

Combining the previous lemmas, we can prove
\Cref{thm:equidistributed}, which we restate for convenience.
\begin{theorem}
  The global dimension of connected linear Nakayama algebras is
  equidistributed with the height of Dyck paths.  In symbols,
  \[
    \sum_{A} q^{\gldim(A)} = \sum_{D} q^{\height(D)},
  \]
  where the first sum ranges over all connected linear Nakayama
  algebras with~$n$ simple modules and the second sum ranges over all
  Dyck paths of semilength~$n-1$.
\end{theorem}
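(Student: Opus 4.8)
The plan is to combine the two decomposition lemmas, \Cref{decompositionTree} and \Cref{decompositionDyckPath}, into a single bijection for each fixed~$g$, and then to deduce the theorem by an inclusion--exclusion argument that passes from products of connected Nakayama algebras to connected ones. Throughout, write $c_n(g)$ for the number of connected linear Nakayama algebras with $n$ simple modules and global dimension at most~$g$, write $p_n(g)$ for the number of $A\in\mathcal A_n$ with $\gldim(A)\le g$, and write $d_n(h)$ for the number of Dyck paths of semilength~$n$ and height at most~$h$.

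First I would show that $p_n(g)=d_n(g+1)$. By the bijection $\tau\colon\mathcal A_n\to\mathcal L_n\cong\mathcal T_n$ together with \Cref{thm:projdim}, an algebra $A\in\mathcal A_n$ satisfies $\gldim(A)\le g$ if and only if its tree $T=\tau(A)\in\mathcal T_n$ satisfies $\max_{0\le i<n}\dist_T(i,i+1)\le g+1$; these are exactly the trees decomposed in \Cref{decompositionTree}. The crucial observation is that the quadruples $(m,L,R,M)$ output by \Cref{decompositionTree} and by \Cref{decompositionDyckPath} are of identical combinatorial type under the classical bijection between ordered rooted trees on $k$ vertices and Dyck paths of semilength $k-1$, which sends the depth of a tree to the height of the corresponding path. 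Applying this bijection to each of the $2m+1$ trees in a quadruple turns the vertex total $n+1+m$ into the semilength total $(n+1+m)-(2m+1)=n-m$, matches each depth bound $\lfloor g/2\rfloor$ or $\lceil g/2\rceil$ with the same height bound, and preserves the condition that $M$ has depth exactly $\lceil g/2\rceil$ when $m>0$. Composing the two lemmas therefore gives the desired bijection, so $p_n(g)=d_n(g+1)$.

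It remains to pass from the products counted by $p_n(g)$ to the connected algebras counted by $c_n(g)$, and correspondingly from height $g+1$ to height $g$. Consider the generating functions $C_g(x)=\sum_{n\ge1}c_n(g)x^n$, $P_g(x)=\sum_{n\ge0}p_n(g)x^n$ and $F_h(x)=\sum_{n\ge0}d_n(h)x^n$. Since every $A\in\mathcal A_n$ is an ordered sequence of connected factors and $\gldim$ is the maximum of the global dimensions of these factors, we have $P_g=1/(1-C_g)$. On the Dyck path side, every Dyck path of height at most $g+1$ factors uniquely as a sequence of paths touching the $x$-axis only at their endpoints, each such path being an up step, a Dyck path of height at most $g$, and a down step; this gives $F_{g+1}=1/(1-xF_g)$. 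Combining these with the identity $P_g=F_{g+1}$ just established yields $C_g=1-1/P_g=1-1/F_{g+1}=1-(1-xF_g)=xF_g$, that is, $c_n(g)=d_{n-1}(g)$ for all $n\ge1$ and $g\ge0$.

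Finally, taking the successive difference in $g$ shows that the number of connected linear Nakayama algebras with $n$ simple modules and global dimension exactly $g$ equals the number of Dyck paths of semilength $n-1$ and height exactly $g$, which is precisely the claimed equidistribution. I expect the main obstacle to be the bookkeeping in the passage from products to connected algebras: the two decomposition lemmas are engineered so that their output quadruples coincide, so the genuinely delicate point is to track the simultaneous shifts in semilength (from $n$ to $n-1$) and in height (from $g+1$ to $g$) correctly, which is exactly what the outer arch in the Dyck path factorisation accounts for.
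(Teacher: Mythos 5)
Your argument is correct and follows essentially the same route as the paper: combine \Cref{decompositionTree} and \Cref{decompositionDyckPath} via the classical depth-to-height bijection between ordered rooted trees and Dyck paths to get $p_n(g)=d_n(g+1)$, then pass to connected algebras with the sequence-of-components generating function identity and shift semilength and height using the arch decomposition of prime Dyck paths, and finish by differencing in~$g$. The only (cosmetic) difference is that you fold the paper's two steps --- the identity $\lvert\mathcal A^c(n,g)\rvert=\lvert\mathcal D^c(n,g)\rvert$ and the outer-arch bijection $\mathcal D^c(n,g)\to\mathcal D(n-1,g-1)$ --- into the single relation $F_{g+1}=1/(1-xF_g)$.
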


\begin{proof}
  There is a classical bijection between rooted ordered trees
  with~$n-1$ vertices and depth~$h$ and Dyck paths of semilength~$n$
  and height~$h$, which proceeds by traversing the tree in pre-order,
  and recording an up step whenever the edge is traversed away from
  the root and a down step otherwise.

  Together with \Cref{thm:projdim}, \Cref{decompositionTree} and
  \Cref{decompositionDyckPath}, this shows that Dyck paths of
  semilength $n$ and height at most $g+1$ are in bijection with
  (possibly disconnected) linear Nakayama algebras with global
  dimension at most $g$.

  Let
  \[
    \mathcal{D}(n,g) := \{D~\text{Dyck path of semilength
      $n$}\mid\height(D)\leq g+1\}
  \]
  and let
  \[
    \mathcal{A}(n,g) := \{A\in\mathcal A_n\mid\gldim(A)\leq g\}.
  \]

  Recall that a Dyck path is \Dfn{prime} if it only returns to the
  $x$-axis with its final step.  Let $\mathcal D^c(n,g)$ be the
  subset of prime Dyck paths in $\mathcal{D}(n,g)$.  Note that
  removing the first up and the final down step of a prime Dyck path
  provides a bijection between $\mathcal D^c(n,g)$ and
  $\mathcal D(n-1,g-1)$

  Let $\mathcal A^c(n,g)$ be the subset of connected linear Nakayama
  algebras in $\mathcal{A}(n,g)$.  We claim that
  $\lvert\mathcal A^c(n,g)\rvert = \lvert\mathcal D^c(n,g)\rvert$.

  To show this, let
  \begin{align*}
    \mathcal D(x,g) &= \sum_n \lvert\mathcal D(n,g)\rvert x^n,%
    &
      \mathcal D^c(x,g) &= \sum_n \lvert\mathcal D^c(n,g)\rvert x^n,\\
    \mathcal A(x,g) &= \sum_n \lvert\mathcal A(n,g)\rvert x^n,%
    &
      \mathcal A^c(x,g) &= \sum_n \lvert\mathcal A^c(n,g)\rvert x^n
  \end{align*}
  be the generating functions associated with the sets introduced
  above.  Since a Dyck path is a sequence of prime Dyck paths, and a
  linear Nakayama algebra is a sequence of connected linear Nakayama
  algebras, we have
  \[
    \mathcal D(x,g) = \frac{1}{1 - \mathcal D^c(x,g)}\quad\text{and}\quad
    \mathcal A(x,g) = \frac{1}{1 - \mathcal A^c(x,g)}.
  \]
  Since $\mathcal D(x,g) = \mathcal A(x,g)$, the claim follows.

  Finally, we note that the statement of the theorem is equivalent to
  the equality
  \[
    \lvert\mathcal A^c(n,g)\rvert - \lvert\mathcal A^c(n,g-1)\rvert %
    = \lvert\mathcal D(n-1,g-1)\rvert %
    - \lvert\mathcal D(n-1,g-2)\rvert.
  \]
\end{proof}

\section{Sincere Nakayama algebras}\label{sec:sincere}

We first look at the magnitude of Nakayama algebras and then focus on Nakayama algebras where every projective module is sincere. In this section the magnitude, defined as the sum of all entries of the inverse of the Cartan matrix of a finite-dimensional algebra $A$ with finite global dimension is denoted by $m_A$.
This notion was first introduced and studied in \cite{CKL}, to which we refer for more information.

\subsection{The magnitude of Nakayama algebras}

\begin{theorem}\label{CKLtheorem}
Let $A$ be a finite-dimensional algebra with Jacobson radical $J$ and finite global dimension $g$.
Then $m_A= \sum\limits_{k=0}^{g}(-1)^l \dim\Ext_A^k(A/J,A/J)$.

\end{theorem}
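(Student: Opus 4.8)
The plan is to read off the sum of all entries of $\cartan_A^{-1}$ from minimal projective resolutions of the simple modules, using \Cref{benpropo} to identify the projective summands appearing in each homological degree. Throughout I work in the Grothendieck group $K_0(\operatorname{mod} A)$, which is free abelian on the classes of the simple modules $S_1,\dots,S_n$, and — because $\gldim A = g < \infty$ — equally free on the classes of the indecomposable projectives $P_1,\dots,P_n$. The point is that $\cartan_A$ and an explicit Ext-matrix realise the two mutually inverse changes of basis between these two bases, and summing all entries recovers $m_A$ on one side and an Euler characteristic on the other.

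First I would record the two transition matrices. Reading off composition factors gives $[P_i]=\sum_j c_{ij}[S_j]$, so $\cartan_A=(c_{ij})$ expresses the projectives in terms of the simples. Conversely, each simple $S_j$ has a finite minimal projective resolution $0\to P^{(g_j)}\to\cdots\to P^{(0)}\to S_j\to 0$ with $g_j\le g$, giving $[S_j]=\sum_{k}(-1)^k[P^{(k)}]$ in $K_0(\operatorname{mod} A)$. The key step is then to evaluate these alternating sums degreewise: by \Cref{benpropo}, the multiplicity of the indecomposable projective $P_i$ (the projective cover of $S_i$) as a summand of $P^{(k)}$ equals $\dim\Ext_A^k(S_j,S_i)$. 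Hence
\[
  [S_j]=\sum_i\Big(\sum_{k=0}^{g}(-1)^k\dim\Ext_A^k(S_j,S_i)\Big)[P_i],
\]
so the matrix $D=(D_{ji})$ with $D_{ji}=\sum_{k}(-1)^k\dim\Ext_A^k(S_j,S_i)$ expresses the simples in terms of the projectives. Since $D$ and $\cartan_A$ are inverse transition matrices between the same two bases, $D\cartan_A=I$, and therefore $D=\cartan_A^{-1}$: each entry of the inverse Cartan matrix is an alternating sum of $\Ext$-dimensions between two simples.

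Finally I would sum all entries. Because $A$ is basic, $A/J\cong\bigoplus_{i}S_i$, so biadditivity of $\Ext$ in each argument yields $\sum_{i,j}\dim\Ext_A^k(S_j,S_i)=\dim\Ext_A^k(A/J,A/J)$ for every $k$. Summing the identity $(\cartan_A^{-1})_{ji}=\sum_{k}(-1)^k\dim\Ext_A^k(S_j,S_i)$ over all pairs $(i,j)$ and interchanging the two finite summations then gives
\[
  m_A=\sum_{i,j}(\cartan_A^{-1})_{ji}=\sum_{k=0}^{g}(-1)^k\dim\Ext_A^k(A/J,A/J),
\]
where the truncation to $0\le k\le g$ is legitimate since $\Ext_A^k(-,-)=0$ for $k>g=\gldim A$. (The total sum of entries is unaffected by the transpose, so the index convention in $D=\cartan_A^{-1}$ causes no ambiguity.)

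The main obstacle is the middle step: pinning down that the alternating $\Ext$-sum matrix is \emph{literally} $\cartan_A^{-1}$ with the correct index bookkeeping, which rests on \Cref{benpropo} together with the fact that finite global dimension makes $K_0(\operatorname{mod} A)$ free on the projective classes. Once that identification is in place, the remaining manipulations — biadditivity of $\Ext$, the identity $A/J\cong\bigoplus_i S_i$, and the interchange of finite sums — are routine.
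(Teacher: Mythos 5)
Your proposal is correct. Note, however, that the paper does not prove this statement at all: its ``proof'' is the single line ``This is the main result of \cite{CKL}'', so there is no internal argument to compare against. What you have written is a complete, self-contained derivation, and it is the standard one (essentially the argument in Chuang--King--Leinster): since $\gldim A=g<\infty$, the classes of the indecomposable projectives form a second basis of $K_0(\operatorname{mod}A)$ alongside the simples; the Cartan matrix is the transition matrix one way, and by \Cref{benpropo} the matrix $D_{ji}=\sum_k(-1)^k\dim\Ext_A^k(S_j,S_i)$ obtained from minimal projective resolutions is the transition matrix the other way, so $D=\cartan_A^{-1}$; summing all entries and using $A/J\cong\bigoplus_i S_i$ gives the formula. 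Two small points of bookkeeping: the identification $\sum_{i,j}\dim\Ext_A^k(S_j,S_i)=\dim\Ext_A^k(A/J,A/J)$ and the interpretation of $c_{ij}=\dim e_iAe_j$ as the multiplicity of $S_j$ in $P_i$ both require $A$ to be basic with one-dimensional endomorphism rings of simples, which you flag and which matches the paper's standing convention that algebras are of the form $KQ/I$ over an algebraically closed field (the theorem as literally stated for an arbitrary finite-dimensional algebra would need this caveat). Also, the exponent $(-1)^l$ in the displayed statement is a typo for $(-1)^k$, which you have silently and correctly repaired.
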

\begin{proof}
This is the main result of \cite{CKL}.
\end{proof}

\begin{proposition} \label{magnitudepropo}
Let $A$ be a Nakayama algebra of finite global dimension.
Then $m_A$ is equal to the number of simple modules of even projective dimension, and also to the number of vertices lying on a cycle in the resolution quiver of $A$.
\end{proposition}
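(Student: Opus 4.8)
The plan is to separate the two asserted equalities. The equality between the number of simple modules of even projective dimension and the number of vertices on the cycle of the resolution quiver is exactly \Cref{Dshentheorems}(3), which applies since finite global dimension forces the resolution quiver to be connected with a single cycle by \Cref{Dshentheorems}(2). So the real content is the identity
\[
  m_A = \#\{\, i \mid \pdim(S_i) \text{ is even}\,\},
\]
which I would derive from the Chuang--King--Leinster formula of \Cref{CKLtheorem}. Since $A/J \cong \bigoplus_i S_i$ as right $A$-modules, the $\Ext$ groups split as $\Ext_A^k(A/J,A/J) \cong \bigoplus_{i,j}\Ext_A^k(S_i,S_j)$, giving
\[
  m_A = \sum_{k=0}^{g} (-1)^k \sum_{i,j} \dim\Ext_A^k(S_i,S_j).
\]

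The key step is to evaluate $\sum_j \dim\Ext_A^k(S_i,S_j)$ for each fixed simple module $S_i$. Here the Nakayama structure is decisive: by the syzygy formula recalled in \Cref{sec:algprops}, every syzygy $\Omega^k(S_i)$ is again of the form $e_s A/e_s J^\ell$, hence indecomposable, so each term $P_k$ of the minimal projective resolution of $S_i$ is an indecomposable projective module or zero. By \Cref{benpropo}, $\dim\Ext_A^k(S_i,S_j)$ is the multiplicity of the projective cover $e_j A$ of $S_j$ as a summand of $P_k$; since $P_k$ is indecomposable, this multiplicity equals $1$ for the unique $j$ with $P_k\cong e_j A$ and $0$ for all others. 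Summing over $j$ therefore yields
\[
  \sum_j \dim\Ext_A^k(S_i,S_j) =
  \begin{cases}
    1 & \text{if } 0\le k\le \pdim(S_i),\\
    0 & \text{otherwise.}
  \end{cases}
\]

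It then remains to perform the alternating summation. Using $\pdim(S_i)\le g$ from \Cref{Auslanderthm}, for each $i$ we get
\[
  \sum_{k=0}^{g}(-1)^k\sum_j \dim\Ext_A^k(S_i,S_j)
  = \sum_{k=0}^{\pdim(S_i)}(-1)^k
  = \begin{cases} 1 & \pdim(S_i)\text{ even},\\ 0 & \pdim(S_i)\text{ odd}.\end{cases}
\]
Summing over all simple modules gives $m_A = \#\{\, i \mid \pdim(S_i)\text{ even}\,\}$, and combining with \Cref{Dshentheorems}(3) finishes the argument.

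The main obstacle is the reduction of the double sum $\sum_{i,j}\dim\Ext_A^k(S_i,S_j)$ to a plain count of which simple modules still have a nonzero $k$-th term in their resolution. This relies entirely on the fact that over a Nakayama algebra the minimal projective resolution of a simple module is thin, i.e.\ has indecomposable terms, so that \Cref{benpropo} contributes a $0/1$ value rather than a genuine multiplicity; everything after that is bookkeeping with the telescoping alternating sum. (I would also read the exponent in \Cref{CKLtheorem} as $(-1)^k$.)
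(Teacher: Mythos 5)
Your proposal is correct and follows essentially the same route as the paper: apply the Chuang--King--Leinster formula, use uniseriality to see that the terms of the minimal projective resolution of a simple module are indecomposable or zero so that \Cref{benpropo} yields a $0/1$ count, and then evaluate the telescoping alternating sum, with \Cref{Dshentheorems}(3) supplying the second equality. The only (immaterial) difference is that you perform the alternating sum fibrewise over each simple $S_i$, whereas the paper first records $\dim\Ext_A^k(A/J,A/J)=\lvert\{S\text{ simple}\mid\pdim(S)\ge k\}\rvert$ and then telescopes over $k$; you are also right that the exponent in \Cref{CKLtheorem} should read $(-1)^k$.
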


\begin{proof}
Assume that $A$ has finite global dimension $g$.  We are going to apply \Cref{CKLtheorem} and therefore compute $\dim\Ext_A^k(A/J,A/J)$.

Since every indecomposable module over a Nakayama algebra is uniserial, any term in the minimal projective resolution of an indecomposable module is zero or indecomposable.
Let $S_i$ and $S_j$ be two simple $A$-modules, and let
\[
\dots\rightarrow P_k\rightarrow\dots\rightarrow P_1 \rightarrow P_0 \rightarrow S_i \rightarrow 0
\]
be the minimal projective resolution of $S_i$.  Since $S_i$ is simple, \Cref{benpropo} implies that the vector space dimension of $\Ext_A^k(S_i,S_j)$ is one if $P_k$ is isomorphic to the projective cover of $S_j$ and zero otherwise.  Since $S_j$ is simple, $P_k$ is isomorphic to the projective cover of $S_j$ if and only if the top of $P_k$ is isomorphic to $S_j$.

Since $A/J$ is the direct sum of all simple $A$-modules, we have
\[
  \Ext_A^k(S_i, A/J) = \bigoplus_j \Ext_A^k(S_i, S_j),
\]
which is one dimensional if $\pdim(S_i)\geq k$ and zero dimensional otherwise.  It follows that $\dim \Ext_A^k(A/J, A/J)= \lvert\{\text{$S$ simple} \mid \pdim(S) \geq k \}\rvert$.

By \Cref{CKLtheorem},
\begin{align*}
    m_A &= \sum_{k=0}^{g}(-1)^l \dim\Ext_A^k(A/J,A/J) \\
    &= \sum_{\text{$k$ even}}\lvert\{\text{$S$ simple} \mid \pdim(S) \geq k \}\rvert - \sum_{\text{$k$ odd}}\lvert\{\text{$S$ simple} \mid \pdim(S) \geq k \}\rvert\\
    &= \sum_{\text{$k$ even}}\lvert\{\text{$S$ simple} \mid \pdim(S) = k \}\rvert
\end{align*}
Thus, $m_A$ is equal to the number of simple modules of even projective dimension.  By \Cref{Dshentheorems}~(3), this is also the number of vertices lying on a cycle in the resolution quiver of $A$.
\end{proof}

\begin{remark}
\label{rmk::Madsen}

We remark that in \cite{Mad}, Madsen proved that a Nakayama algebra has finite global dimension if and only if there is a simple module of even projective dimension. In \cite{SW} a systematic study of the magnitude for Nakayama algebras can be found, which also includes an alternative proof for the previous proposition.
\end{remark}
We call a Nakayama algebra $A$ an \Dfn{M1-Nakayama algebra} if $m_A=1$.
We now give a classification of M1-Nakayama algebras using the previous proposition:
\begin{proposition} \label{magnitudetheorem}
The following are equivalent for a Nakayama algebra $A$ with $n$ simple modules.
\begin{enumerate}
\item $A$ has finite global dimension and magnitude one.
\item $A$ has a unique indecomposable projective module of dimension $n$.
\item $A$ has finite global dimension and Loewy length at least $n$.
\end{enumerate}
\end{proposition}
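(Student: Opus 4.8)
The plan is to prove the cyclic chain of implications $(1)\Rightarrow(2)\Rightarrow(3)\Rightarrow(1)$, using the characterisation of magnitude from \Cref{magnitudepropo} as the number of vertices on a cycle of the resolution quiver, together with the Gustafson bound from \Cref{gustafsonbound} and the structure of the resolution quiver from \Cref{Dshentheorems}.

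First I would prove $(1)\Rightarrow(2)$. By \Cref{magnitudepropo}, $m_A=1$ means the unique cycle of the resolution quiver has exactly one vertex, say $i$, so that cycle is a loop: $i+c_i\equiv i\pmod n$, i.e.\ $n\mid c_i$. Since $A$ has finite global dimension, \Cref{Dshentheorems}~(2) forces the weight of this (single-vertex) cycle to equal $1$; but the weight of a loop at $i$ is $\tfrac{1}{n}c_i$, so $c_i=n$. This gives an indecomposable projective of dimension $n$. For uniqueness I would argue that no $c_j$ can exceed $n$ once we know finite global dimension forces the Loewy length and the Gustafson bound to cooperate; more directly, if two indices had $c_j=n$ both would be fixed points of the resolution map, contradicting connectedness (a single cycle) of the resolution quiver, which \Cref{Dshentheorems}~(2) guarantees.

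Next, for $(2)\Rightarrow(3)$, suppose $A$ has a unique indecomposable projective of dimension $n$; then its Loewy length is at least $n$, so the only content is to establish finite global dimension. The key observation is that $c_i=n$ makes vertex $i$ a loop in the resolution quiver of weight $\tfrac{1}{n}\cdot n = 1$. By \Cref{Dshentheorems}~(1) every cycle has the same size and weight, so every cycle is a loop of weight $1$; uniqueness of the projective of dimension $n$ means $i$ is the only fixed point, hence the resolution quiver has a single cycle and is therefore connected with cycle weight $1$. By \Cref{Dshentheorems}~(2), $A$ has finite global dimension, and by \Cref{magnitudepropo} its magnitude is $1$. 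For $(3)\Rightarrow(1)$ I would start from finite global dimension together with Loewy length $\ge n$: the Gustafson bound (\Cref{gustafsonbound}) caps the Loewy length at $2n-1$, and I claim some $c_i$ equals exactly $n$. Here the constraint $c_{i+1}+1\ge c_i$ from~\cref{cnak_eq} is essential, since it prevents the $c_i$ from jumping past $n$; combined with finite global dimension forcing the unique cycle to have weight $1$, this pins down a single loop and hence $m_A=1$.

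The main obstacle I anticipate is the uniqueness and the delicate interplay in $(3)\Rightarrow(1)$ between ``Loewy length $\ge n$'' and ``weight exactly $1$''. The weight being $1$ says $\tfrac{1}{n}\sum_{\ell}c_{i_\ell}=1$ over the cycle, i.e.\ the cycle entries sum to $n$; simultaneously having some $c_i\ge n$ seems to force that $c_i=n$ and that the cycle is a single loop, but making this rigorous requires carefully using that entries on the cycle are positive and that the resolution map $i\mapsto i+c_i\bmod n$ returns to its start after summing exactly one full turn around $\mathbb{Z}/n\mathbb{Z}$. I expect that the cleanest route is to show directly that weight $1$ plus the existence of an index with $c_i\ge n$ implies that index is a loop with $c_i=n$, which then simultaneously yields uniqueness (any second such index would give a second cycle, contradicting connectedness) and closes the loop of implications.
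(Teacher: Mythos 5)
Your overall strategy is the paper's: translate all three conditions through \Cref{magnitudepropo} into statements about the resolution quiver, and combine \Cref{Dshentheorems} with \Cref{gustafsonbound}. Your $(1)\Rightarrow(2)$ and $(2)\Rightarrow(3)$ (which, as you note, also yields $(2)\Rightarrow(1)$) are correct and essentially identical to the paper's arguments; the only cosmetic difference is that in $(1)\Rightarrow(2)$ you extract $c_i=n$ from the weight-one condition on the loop rather than from $n\mid c_i$ combined with the Gustafson bound $c_i\le 2n-1$, and both routes work.

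The genuine gap is in $(3)\Rightarrow(1)$. To produce an index with $c_i=n$ by an intermediate-value argument based on $c_{i+1}+1\ge c_i$ you need two anchors: some $c_k\ge n$, which the Loewy length hypothesis provides, and some $c_j\le n$, which your sketch never establishes -- ``the constraint prevents the $c_i$ from jumping past $n$'' is vacuous if every entry exceeds $n$. Your proposed repair, that an index with $c_i\ge n$ must itself be a loop with $c_i=n$, is false: for the sincere algebra with Kupisch series $[6,8,9,9,8,7]$ the Loewy length $9$ is attained at indices $2$ and $3$, neither of which is a fixed point of $i\mapsto i+c_i$. The paper closes this gap by citing the fact that a Nakayama algebra of finite global dimension has at least one $c_j\le n$ (\cite[Lemma 3.2(1)]{MMZ}). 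Alternatively, you can extract this from material you already invoke: weight one means the entries on the cycle sum to $n$, and since each entry is at least $2$, every vertex $j$ on the cycle satisfies $c_j\le n$; the intermediate-value argument then yields some $c_i=n$, which is a loop, and connectedness of the resolution quiver (from \Cref{Dshentheorems}~(2)) forces the unique cycle to be that loop, whence $m_A=1$ by \Cref{magnitudepropo}. Finally, note that \Cref{Dshentheorems} and \cref{cnak_eq} are stated for cyclic Nakayama algebras only, so like the paper you should dispose of the linear case separately (there the unique algebra satisfying any of the three conditions has Kupisch series $[n,n-1,\dots,1]$).
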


\begin{proof}
The equivalences are trivial for linear Nakayama algebras.  In this case the unique algebra satisfying any of the properties has Kupisch series $[n,n-1,\dots,3,2,1].$  Thus, we assume that $A$ is a cyclic Nakayama algebra.

Let us first show that $A$ has finite global dimension if it has a unique indecomposable projective module of dimension $n$.  This module corresponds to a loop of weight~$1$ in the resolution quiver.  By \Cref{Dshentheorems}~(1), any further cycle in the resolution quiver would then also be a loop of weight $1$, and therefore also correspond to indecomposable projective modules of dimension $n$.  Thus, the resolution quiver is in fact connected and its unique cycle has weight $1$.  \Cref{Dshentheorems}~(2) now implies that $A$ has finite global dimension.

Moreover, in this case \Cref{magnitudepropo} implies that $A$ has magnitude one, and, trivially, the Loewy length of $A$ is at least $n$.

Assume now that $A$ has finite global dimension and magnitude one.  By \Cref{magnitudepropo}, the magnitude is equal to the number of vertices in the resolution quiver of $A$ that lie on a cycle, so the unique cycle in the resolution quiver must be a loop.  Thus, there is a unique $i$ with $c_i=\dim e_i A\equiv 0 \mod n.$  By \Cref{gustafsonbound}, we have that $c_j \leq 2n-1$ for all $j$, so $c_i=n$.

Finally, assume that $A$ has finite global dimension and the Loewy length of $A$ is at least $n$.  By \cite[Lemma 3.2. (1)]{MMZ}, a Nakayama algebra of finite global dimension must have at least one $c_i$ with $c_i\le n$.  Because of \Cref{cnak_eq} every value between the minimal entry and the maximal entry of the Kupisch series of $A$ must be attained.  Therefore, there is an $i$ with $c_i=n$.  Since the resolution quiver of $A$ is connected and has weight $1$ by \Cref{Dshentheorems}~(2), there are no further entries equal to~$n$ in the Kupisch series of $A$.
\end{proof}

\begin{corollary}
The number of Nakayama algebras with $n$ simple modules of finite global dimension and magnitude one is given by the Catalan number $C_n$.
\end{corollary}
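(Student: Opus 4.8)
The plan is to combine the classification in \Cref{magnitudetheorem} with a reparametrisation of the Kupisch series that turns the counting problem into a textbook instance of the Catalan recurrence. By the equivalence of~(1) and~(2) in \Cref{magnitudetheorem}, a Nakayama algebra is counted precisely when its Kupisch series $[c_0,\dots,c_{n-1}]$ has a unique entry equal to~$n$. Every such algebra is isomorphic to exactly one whose series satisfies $c_0=n$: two connected cyclic Nakayama algebras are isomorphic if and only if their Kupisch series are cyclic rotations of one another, and since the entry~$n$ occurs only once there is a unique rotation placing it in position~$0$ and no nontrivial rotational symmetry, while the unique linear example $[n,n-1,\dots,1]$ already has $c_0=n$. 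First I would therefore reduce to counting Kupisch series with $c_0=n$ as the unique entry equal to~$n$.

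Next I would introduce the substitution $U_i:=\sum_{j=0}^{i-1}(c_{j+1}-c_j+1)$, equivalently $c_i=n-i+U_i$, where indices are taken modulo~$n$ so that $c_n=c_0$. A direct check shows that this is a bijection onto the set of weakly increasing integer sequences $0=U_0\le U_1\le\dots\le U_n=n$ satisfying $U_i\ne i$ for $0<i<n$: the inequality $c_{i+1}+1\ge c_i$ becomes exactly $U_{i+1}\ge U_i$, the telescoping sum forces $U_n=n$, the conditions $c_i\ge 2$ (and $c_{n-1}\ge 1$ in the single linear case) turn out to hold automatically, and the requirement that~$n$ be attained only at~$c_0$ becomes $U_i\ne i$ for interior~$i$. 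The point I would verify most carefully here is that no further inequality survives, so that the resulting combinatorial model is exactly this diagonal-avoiding class and nothing is over- or under-counted.

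Finally I would enumerate these sequences. Since the sequence is weakly increasing while the diagonal value~$i$ increases by one at each step, once $U_i>i$ one has $U_{i'}>i'$ for all larger interior~$i'$; hence there is a single cut point~$k$ with $U_i<i$ for $0<i\le k$ and $U_i>i$ for $k<i<n$. The ``low'' block is a weakly increasing sequence with $0\le U_i\le i-1$, a classical Catalan family counted by~$C_k$, and the ``high'' block is counted by~$C_{n-1-k}$ after the reflection $U_i\mapsto n-U_{n-i}$, which simultaneously exhibits its independence from the low block, the only potential link $U_k\le U_{k+1}$ being automatic. Summing over the cut point yields $\sum_{k=0}^{n-1}C_k\,C_{n-1-k}=C_n$, the Segner recurrence for the Catalan numbers. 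The main obstacle is the bookkeeping in the middle step—confirming that the boundary Kupisch inequalities are redundant and that the low and high blocks genuinely decouple—after which the enumeration is immediate.
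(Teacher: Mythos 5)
Your argument is correct, and up to the final counting step it follows the same skeleton as the paper's proof: both reduce via \Cref{magnitudetheorem} to counting Kupisch series with a unique entry equal to~$n$, normalise by rotation so that $c_0=n$, and exploit the fact that the remaining entries split cleanly into an initial block with $c_i<n$ and a terminal block with $c_i>n$ (your observation that $U_i>i$ propagates forward is exactly the paper's observation that $c_i<n$ forces $c_{i+1}>n$ once $n$ is forbidden). Where the two proofs part ways is in how the two blocks are assembled. The paper reverses and reflects the low block, shifts the high block, and glues the results into the area sequence of a single Dyck path of semilength~$n$, so it obtains an explicit bijection with Dyck paths and never needs to invoke a Catalan identity. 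You instead count the two blocks separately as subdiagonal weakly increasing sequences (each a standard Catalan family, with the cross-inequality $U_k\le U_{k+1}$ correctly checked to be automatic) and conclude by the Segner convolution $\sum_k C_kC_{n-1-k}=C_n$. Your route is arithmetically transparent and all the verifications you flag as delicate (redundancy of $c_i\ge 2$ for $i\le n-2$, the boundary case $U_{n-1}=0$ giving precisely the linear algebra $[n,n-1,\dots,1]$, decoupling of the blocks) do go through; the trade-off is that you obtain only the enumeration, whereas the paper's version produces the explicit Dyck-path bijection it advertises. Both proofs are valid.
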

\begin{proof}
We provide a bijection between Kupisch series with exactly one entry equal to $n$ and Dyck paths of semilength $n$.

Let $[c_0,\ldots,c_{n-1}]$ be the Kupisch series of a Nakayama algebra $A$ and suppose it contains exactly one entry equal to $n$.  Without loss of generality, we can assume that $c_0=n$.  This is the case if $A$ is a linear Nakayama algebra.  Otherwise, every cyclic rotation of the Kupisch series corresponds to a Nakayama algebra isomorphic to $A$.

Suppose that $c_i < n$ for some $i \geq 1$.  Then, because $c_{i+1} \geq c_i - 1$ and $c_{i+1} \neq n$, also $c_{i+1} > n$.  Thus, if $k$ is maximal such that $c_k\leq n$, we have $c_i \leq n$ for $0\leq i\leq k$, and $c_i > n$ for $k < i \leq n-1$.

Let
\[
c^\prime_i =
\begin{cases}
    n + 2 - c_{k-i} & \text{for $0\leq i\leq k$}\\
    c_i - n + 1 & \text{for $k < i\leq n-1$}\\
    1 & \text{for $i=n$}.
\end{cases}
\]
We claim that $[c^\prime_0,\dots,c^\prime_n]$ is the area sequence of a Dyck path.  It is immediate that $c^\prime_i\geq 2$ for $i<n$, so it remains to show that $c^\prime_{i+1} + 1 \geq c^\prime_i$ for $0\leq i < n$.

For $0\leq i< k$ we have $c^\prime_{i+1} + 1 = n+3-c_{k-i-1}\geq n+2-c_{k-i} = c^\prime_i$.  If $k=n-1$ we have $c^\prime_{k+1}+1=2=n+2-c_0=c^\prime_k$.  Otherwise, if $k<n-1$, we have \begin{align*}
    c^\prime_{k+1} + 1 &= c_{k+1} - n + 2\geq 2 = n + 2 - c_0 = c^\prime_k\\
    c^\prime_{i+1} + 1 &= c_{i+1} - n + 2 \geq c_i - n + 1 = c^\prime_i \text{ for $k < i < n-1$, and}\\
    c^\prime_n + 1 &= 2 \geq c_{n-1} - n + 1 = c^\prime_{n-1}.
\end{align*}

Therefore, the map sending the Kupisch sequence to the Dyck path with area sequence $[c^\prime_0,\dots,c^\prime_n]$ is well-defined.  To show that it is invertible, note that $c^\prime_i > 2$ for $i < k$ and that $c^\prime_k = 2$.  In fact, given any area sequence $[c^\prime_0,\dots,c^\prime_n]$, let $k$ be the minimal index such that $c^\prime_k = 2$.  Then, the Kupisch series
\[
c_i = \begin{cases}
    n + 2 - c^\prime_{k-i} & \text{for $0\leq i\leq k$}\\
    c^\prime_i - n + 1 & \text{for $k < i\leq n-1$},
\end{cases}
\]
is the preimage of the given area sequence.
\end{proof}

\subsection{The global dimension of sincere Nakayama algebras}

Finally, we focus on sincere Nakayama algebras. We call a finite-dimensional algebra $A$ is \Dfn{sincere} if every indecomposable projective $A$-module $P$ is sincere, meaning that every simple module occurs at least once in the composition series of $P$.
Sincere modules play an important role in the representation theory of Artin algebras, see for example \cite[Chapter IX.]{ARS}.

Our final theorem gives a classification of sincere Nakayama algebras of finite global dimension by showing that they are in a bijection with Dyck paths. Furthermore, it provides an explicit formula for their global dimensions using the bounce count of the corresponding Dyck paths. Note that every sincere Nakayama algebra is cyclic.

For the next theorem, we need the following proposition of Ringel, see \cite[3.3]{Rin2}.

\begin{proposition} \label{Ringelproposition}
Let $A$ be a Nakayama algebra of finite global dimension and let $M$ be an indecomposable $A$-module.
Then $M$ has odd projective dimension if and only if all composition factors of $M$ have odd projective dimension.
\end{proposition}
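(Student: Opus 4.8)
The plan is to introduce an additive Euler-type invariant that detects the parity of the projective dimension, thereby sidestepping a direct induction on projective dimensions along a composition series. For any $A$-module $N$ of finite projective dimension, set
\[
  \chi(N) = \sum_{k\geq 0}(-1)^k\dim\Ext_A^k(N, A/J).
\]
Since $A$ has finite global dimension, these Ext-spaces are finite-dimensional and vanish for large $k$, so $\chi(N)$ is well defined. I would first record two properties of $\chi$. It is \emph{additive on short exact sequences}: applying $\Ext_A^\bullet(-,A/J)$ to a short exact sequence $0\to X\to Y\to Z\to 0$ produces a bounded long exact sequence of finite-dimensional spaces, whose alternating sum of dimensions vanishes, giving $\chi(Y)=\chi(X)+\chi(Z)$. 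Second, for an \emph{indecomposable} module $M$ I would show that $\chi(M)=1$ if $\pdim M$ is even and $\chi(M)=0$ if $\pdim M$ is odd. This is the step that uses the Nakayama structure: by the explicit syzygy formula recalled in \Cref{sec:algprops}, every syzygy $\Omega^k(M)$ is of the form $e_\bullet A/e_\bullet J^\bullet$, hence zero or indecomposable, so in the minimal projective resolution each $P_k$ is indecomposable. By \Cref{benpropo}, $\dim\Ext_A^k(M,A/J)$ equals the number of indecomposable summands of $P_k$, which is $1$ for $0\leq k\leq\pdim M$ and $0$ otherwise. Therefore $\chi(M)=\sum_{k=0}^{\pdim M}(-1)^k$, which equals $1$ or $0$ according to the parity of $\pdim M$.

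Next I would evaluate $\chi(M)$ along the composition series of $M$. Using the short exact sequence $0\to\operatorname{rad}M\to M\to S_i\to 0$, where $S_i$ is the top of $M$, and inducting on the length of $M$, additivity yields $\chi(M)=\sum_j\chi(S_j)$, the sum ranging over the composition factors $S_j$ of $M$. Applying the second property above to each simple module (the case $M=S_j$), this shows that $\chi(M)$ equals the number of composition factors of $M$ of even projective dimension.

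Finally I would combine the two evaluations of $\chi(M)$. On the one hand $\chi(M)\in\{0,1\}$, with value determined by the parity of $\pdim M$; on the other hand $\chi(M)$ counts the composition factors of even projective dimension. Hence $\pdim M$ is odd if and only if $\chi(M)=0$, if and only if $M$ has no composition factor of even projective dimension, that is, all composition factors of $M$ have odd projective dimension. As a byproduct this also shows that an indecomposable module has at most one composition factor of even projective dimension, but the proposition only requires the vanishing case.

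I expect the main obstacle to be conceptual rather than computational. A naive induction that tries to deduce the parity of $\pdim M$ from the projective dimensions of $\operatorname{rad}M$ and of the top $S_i$ via the standard homological inequalities for a short exact sequence does not close up, since those inequalities alone permit parity configurations that would violate the statement; additional input from the Nakayama structure is essential. The key realization is that one should instead track the additive invariant $\chi$, whose value on an indecomposable module is forced into $\{0,1\}$ precisely because syzygies over a Nakayama algebra remain indecomposable. Verifying this $\{0,1\}$-dichotomy, together with the additivity, is the only genuinely load-bearing step, and it is exactly where the hypotheses of uniseriality and finite global dimension are used.
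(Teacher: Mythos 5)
Your argument is correct, but it is genuinely different from what the paper does: the paper does not prove \Cref{Ringelproposition} at all, it simply cites Ringel \cite[3.3]{Rin2}. You instead give a self-contained proof via the Euler form $\chi(N)=\sum_k(-1)^k\dim\Ext_A^k(N,A/J)$, and every step checks out: additivity follows from the bounded long exact sequence of finite-dimensional Ext-spaces (all modules have finite projective dimension since $\gldim A<\infty$, so $\chi$ is defined everywhere); the $\{0,1\}$-dichotomy for indecomposables follows because uniseriality forces every syzygy, hence every term of a minimal projective resolution, to be zero or indecomposable, so \Cref{benpropo} gives $\dim\Ext_A^k(M,A/J)=1$ for $0\le k\le\pdim M$ and $0$ otherwise (using that $A$ is basic, so $A/J$ is the multiplicity-free sum of the simples); and filtering by a composition series gives $\chi(M)=\sum_j\chi(S_j)$. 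Comparing the two evaluations yields the claim, together with the stronger byproduct that an indecomposable module has at most one composition factor of even projective dimension. Notably, your two key ingredients are exactly the ones the paper itself uses in the proof of \Cref{magnitudepropo} (the computation of $\dim\Ext_A^k(A/J,A/J)$ via \Cref{benpropo} and indecomposability of the resolution terms), so your proof has the merit of making the paper self-contained at no extra cost in machinery, whereas the citation to Ringel keeps the exposition shorter and defers to the original, more general study of parity of projective dimensions over Nakayama algebras.
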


\begin{theorem}
\label{gldim_formula}
Let $A$ be a sincere Nakayama algebra with $n$ simple modules.  Then $A$ has finite global dimension if and only if $A$ has a unique indecomposable projective module of dimension $n$.

In this case, the map sending the Kupisch series $[c_0,\ldots,c_{n-1}]$, with $c_0=n$, of a sincere Nakayama algebra of finite global dimension $A$ to the Dyck path $D$ with area sequence
\[
  [c_1-n+1, c_2-n+1, \ldots, c_{n-1}-n+1, 1]
\]
is a bijection.  Moreover,
\[
\gldim A = 2\cdot b_D,
\]
where $b_D$ is the bounce count of the Dyck path~$D$.
\end{theorem}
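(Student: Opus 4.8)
The plan is to treat the three assertions in turn. For the first, I would begin by observing that a Nakayama algebra is sincere exactly when $c_i\ge n$ for all~$i$: the composition factors of the projective $e_iA$ are $S_i,S_{i+1},\dots,S_{i+c_i-1}$, and these exhaust all simples precisely when $c_i\ge n$. In particular a sincere algebra has Loewy length $\max_i c_i\ge n$, so the hypothesis ``Loewy length at least $n$'' in \Cref{magnitudetheorem}~(3) is automatic. The equivalence of~(2) and~(3) there then reads: a sincere~$A$ has finite global dimension if and only if it has a unique indecomposable projective of dimension~$n$, which is the first claim.

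For the bijection I would use that, by the first part, a sincere algebra of finite global dimension has a unique index with $c_i=n$; after a cyclic rotation (which does not change the isomorphism type) I may assume $c_0=n$, whence $c_j\ge n+1$ for $1\le j\le n-1$. The cyclic inequality \cref{cnak_eq} at the wrap-around gives $c_{n-1}\le c_0+1=n+1$, so $c_{n-1}=n+1$. A direct check using \cref{cnak_eq} shows that $[c_1-n+1,\dots,c_{n-1}-n+1,1]$ satisfies the defining inequalities of an area sequence: every entry before the last is $\ge 2$, and $c_{j+1}-n+2\ge c_j-n+1$ is exactly $c_{j+1}+1\ge c_j$. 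I would then exhibit the inverse explicitly: from an area sequence $[a_0,\dots,a_{n-2},1]$ set $c_0=n$ and $c_j=a_{j-1}+n-1$. The area-sequence inequality $a_{n-2}\le 2$ is precisely what is needed for the wrap-around condition $c_0+1\ge c_{n-1}$, and the remaining inequalities are immediate, producing a sincere Kupisch series with a unique entry equal to~$n$. This makes the correspondence a bijection with Dyck paths of semilength~$n-1$.

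For the global dimension I would invoke \Cref{Auslanderthm} to reduce to computing $\pdim(S_i)$ for the simples. Writing $b_{i,k}=e_iA/e_iJ^k$ and recalling $\Omega(b_{i,k})=b_{i+k,\,c_i-k}$, a two-step computation gives $\Omega^{2}(b_{i,k})=b_{i+c_i,\,c_{i+k}-c_i+k}$, so the left end-points of the syzygies, lifted to~$\mathbb Z$, are governed by the map $f(x)=x+c_{x\bmod n}$: the even and odd syzygies of $S_i$ sit at $f^t(i)$ and $f^t(i+1)$ respectively. Exactly as in the linear case of \Cref{thm:projdim}, the resolution terminates when these interleaved sequences first fail to be strictly increasing, that is, when the $f$-orbits of $i$ and $i+1$ meet. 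The structural features I would record are that $f$ is weakly increasing (from $c_{m+1}\ge c_m-1$), that $f(x+n)=f(x)+n$, and that $f(0)=c_0=n$, so the orbit of~$0$ is $0,n,2n,\dots$.

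It remains to identify the meeting time with the bounce count and to show it is maximised at $i=0$. Here I would note that after the shift $\beta_t=b_t+1$ the bounce recursion becomes $\beta_{t+1}=\beta_t+c_{\beta_t}-n=:\varphi(\beta_t)$ with $\beta_0=1$ and stopping value $\beta_{b_D}=n$. Using the bound $c_m\le 2n-m$ (which follows from $c_0=n$ and \cref{cnak_eq}), one checks that $\varphi$ on $\{1,\dots,n\}$ and $f$ on $\{0,\dots,n-1\}$ are the same dynamical system under the identification $n\equiv 0$; hence $b_D$ is the number of $\varphi$-steps from~$1$ to~$n$, which equals the resolution-quiver depth $\depth(1)$. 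Since $\varphi$ is weakly increasing, the walk from the smallest start~$1$ is the slowest to reach~$n$, so $\depth(1)=\max_j\depth(j)=b_D$. Finally, because $f$ is weakly increasing and commutes with $+n$, the orbits $f^t(i),f^t(i+1)$ always stay within distance~$n$; once both have reached $n\mathbb Z$, by step $\max(\depth(i),\depth(i+1))\le b_D$, their gap is $0$ or $n$, forcing a meeting. This gives $\pdim(S_i)\le 2b_D$ for every~$i$, with equality for $i=0$ since the orbit of~$1$ first meets $n\mathbb Z$ only at step~$b_D$, so $\gldim A=2b_D$. I expect the real obstacle to be this third step: setting up the lifted walk and the merging criterion carefully, and especially proving the identification of $f$ with the bounce map~$\varphi$, for which $c_m\le 2n-m$ and the weak monotonicity of~$f$ are the crucial inputs.
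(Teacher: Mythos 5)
Your proposal is correct, and the first two parts (reduction to \Cref{magnitudetheorem} via $c_i\ge n$, and the direct verification of the bijection together with its explicit inverse) coincide with the paper's argument. For the global dimension formula, however, you take a genuinely different route. The paper computes the minimal projective resolution of $S_0$ explicitly by induction, obtaining $P_{2t}\cong e_0A$ and $P_{2t+1}\cong e_{b_t+1}A$, and then proves maximality of $\pdim(S_0)$ by a parity argument: since $c_0=n$ forces the unique cycle of the resolution quiver to be a loop, $S_0$ is the only simple of even projective dimension; Ringel's \Cref{Ringelproposition} applied to the (sincere) indecomposable injectives shows $\gldim A$ is even, and \Cref{Auslanderthm} then forces $\gldim A=\pdim S_0$. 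You instead lift the syzygy dynamics to $\mathbb Z$ via $f(x)=x+c_{x\bmod n}$, in the spirit of \Cref{thm:projdim}, identify the $f$-orbit of $1$ with the bounce recursion through $\varphi(x)=x+c_x-n$ (using $c_0=n$ and the consequence $c_m\le 2n-m$ of \cref{cnak_eq} to keep $\varphi$ inside $\{1,\dots,n\}$), and exploit the weak monotonicity of $f$ to show that all orbits starting in $\{1,\dots,n\}$ have coalesced at $(b_D+1)n$ by time $b_D$; this yields $\pdim(S_i)\le 2b_D-1$ for $i\ne 0$ and $\pdim(S_0)=2b_D$ in one stroke. Your argument is more elementary in that it avoids Ringel's finitistic-dimension result and the duality argument with injective modules entirely, and it is more uniform with the tree-based analysis of Section~3; the paper's route, in exchange, isolates the structural facts that $S_0$ is the unique simple of even projective dimension and that the global dimension of a sincere Nakayama algebra is always even, which are of independent interest. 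The steps you flag as delicate (the merging criterion for the interleaved sequence $f^t(i),f^t(i+1)$ and the identification of $\varphi$ with the bounce map) do check out: the sequence of left endpoints is weakly increasing, the resolution terminates at the first repetition, and $\beta_{t+1}=\beta_t+c_{\beta_t}-n$ is exactly the bounce recursion after the shift $\beta_t=b_t+1$.
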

\begin{proof}
  The first statement is a consequence of \Cref{magnitudetheorem},
  since a sincere Nakayama algebra has $c_i \geq n$ for all $i$, and
  thus has Loewy length at least $n$.

  It is an immediate consequence of the defining
  property~\eqref{cnak_eq} of Kupisch series of cyclic Nakayama
  algebras, and the fact that $c_i > n$ for $i\nequiv 0 \mod n$ and
  $c_n=c_0=n$ that
  $[c'_0,c'_1,\dots,c'_{n-1}] = [c_1-n+1, c_2-n+1, \ldots,
  c_{n-1}-n+1, 1]$ is the area sequence of a Dyck path $D$ of
  semilength~$n-1$, that is,
  \begin{gather*}
    c'_{i+1}+1\ge c'_i \ge 2 \text{ for } 0\le i\le n-1,\text{ and}\\
    c'_{n-1}=1.
  \end{gather*}
  Conversely, adding $n-1$ to each element of the area sequence of a
  Dyck path we obtain a Kupisch sequence $[c_0,\dots, c_{n-1}]$ that
  additionally satisfies $c_i\geq n$ for all $i$ and thus corresponds
  to a sincere Nakayama algebra.

  It remains to determine the global dimension of $A$.  We will in
  fact compute the projective dimension of $S_0$ and then show that
  this is the global dimension of $A$.  Let
  \[
    \ldots \longrightarrow P_2 \longrightarrow P_1 \longrightarrow P_0 \longrightarrow S_0 \longrightarrow 0
  \]
  be a minimal projective resolution of $S_0$ and let
  $0 = b_0 < b_1< b_2 <\ldots < b_d = n - 1$ be the bounce points of
  the bounce path associated with the Dyck path $D$, defined
  inductively via $b_{t+1} = b_t + c'_{b_t} - 1$.  We will show that
  \begin{equation}
    \label{resolution}
    P_m\cong
    \begin{cases}
      e_0 A &\text{for $0\leq m=2t\leq 2d$,}\\
      e_{b_t+1} A &\text{for $1\leq m=2t+1< 2d$,}\\
      0 &\text{for $m>2d$.}\\
    \end{cases}
  \end{equation}
  In particular, $\pdim(S_0)=2d$.

  We prove \cref{resolution} by induction on $m$.  Recall that
  $P_m \cong e_i A$ if and only if the $m$-th syzygy $\Omega^m(S_0)$
  is of the form $e_i A/e_i J^k$ for some $1\leq k\leq c_i$.  In
  particular, $P_0\cong e_0 A$.  Using the formula
  \begin{equation}
    \label{omega}
    \Omega(e_iA/e_iJ^k)\cong e_{i+k}A/e_{i+k}J^{c_i-k}
  \end{equation}
  we obtain that
  $\Omega(S_0) = \Omega(e_0 A/e_0 J^1) = e_1 A/e_1 J^{c_0-1}$ and
  therefore $P_1 = e_1 A$.  Note that $\Omega(S_0)$ is not
  projective, because $c_0-1 = n-1 < c_1$.

  Suppose now that $m=2t$ is even, $P_m \cong e_0 A$ and
  $P_{m+1} \cong e_{b_t+1} A$, and assume that $\Omega^m(S_0)$ and
  $\Omega^{m+1}(S_0)$ are not projective.  Using \cref{omega} we
  then obtain, for some $1\leq k < n$, that
  \begin{align*}
    \Omega^m(S_0) & \cong e_0 A / e_0 J^k,%
    &&\text{because $P_m \cong e_0 A$,}\\
    \Omega^{m+1}(S_0) & \cong e_k A / e_k J^{n - k},%
    &&\text{because $c_0 = n$, and}\\
    \Omega^{m+2}(S_0) & \cong e_0 A / e_0 J^{c_k - n + k},%
    &&\text{because $e_n = e_0$, and finally}\\
    \Omega^{m+3}(S_0) & \cong e_{c_k - n + k} A / e_{c_k - n + k} J^{2n-c_k-k}.
  \end{align*}
  In particular, $P_{m+2}\cong e_0 A$.

  Let us first show that $\Omega^{m+2}(S_0)$ is projective if and
  only if $b_{t+1}=n-1$, that is, if $t+1=d$.  Indeed, since
  $P_{m+1} \cong e_{b_t+1} A$ we infer $k=b_t+1$.  This implies
  that
  \begin{equation}\label{eq:bounce}
    c_k - n + k = c_{b_t+1} - n + b_t + 1 = c'_{b_t} + b_t = b_{t+1} + 1.
  \end{equation}
  Thus, $e_0 A / e_0 J^{c_k - n + k} = e_0 A / e_0 J^{c_0}$ if and
  only if $c_0 = n = b_{t+1} + 1$.

  If $\Omega^{m+2}(S_0)$ is not projective, \cref{eq:bounce}
  additionally implies that $P_{m+3}\cong e_{b_{t+1}+1} A$, as
  claimed.  Furthermore, in this case
  \[
    \Omega^{m+3}(S_0)\cong e_{b_{t+1}+1} A / e_{b_{t+1}+1}
    J^{n-1-b_{t+1}}
  \]
  cannot be projective either, since
  $0 < n-1-b_{t+1} < n \leq c_{b_{t+1} + 1}$.  This establishes
  \cref{resolution}.

  Finally, we show that $\gldim A = \pdim S_0$.  Since $A$ has finite
  global dimension, there is a unique component in the resolution
  quiver of $A$, and thus also a unique cycle.  Since $c_0=n$, this
  unique cycle is a loop.  Since the simple modules in a cycle are
  exactly those of even projective dimension, $S_0$ is the unique
  simple module of even projective dimension.  Now we use
  \Cref{Ringelproposition} to show that every indecomposable
  injective $A$-module has even projective dimension. To see this,
  assume $I$ is an indecomposable injective $A$-module. Since $A$ is
  sincere, so is $A^{op}$, and therefore every simple $A$-module
  appears as a composition factor of $I$.  Thus $I$ is sincere.
  Since $S_0$ has even projective dimension, $I$ cannot have odd
  projective dimension by \Cref{Ringelproposition}.  Recall that
  \[
    \gldim A= \pdim D(A)= \max \{ \pdim I \mid I \
    \text{indecomposable injective}
    \}
  \]
  for any finite-dimensional algebra $A$ of finite global dimension,
  see for example \cite[Chapter VI.5, Lemma 5.5]{ARS}.  Thus, the
  global dimension is even, and by Auslander's \Cref{Auslanderthm} we
  must have $\gldim A = \pdim S_0$, since $S_0$ is the unique simple
  module of even projective dimension.
\end{proof}

\begin{example}
  Consider the sincere Nakayama algebra $A$ with Kupisch series
  \[
    [c_0,\ldots,c_{11}] = [12, 14, 16, 16, 16, 15, 14, 15, 14, 14, 14, 13].
  \]
  It has $12$ simple modules and exactly one indecomposable
  projective module of dimension $12$, namely $P(0)$.  Thus, we can
  calculate its global dimension using \Cref{gldim_formula}. For this, we have to look at the Dyck path $D$
  with area sequence $[3,5,5,5,4,3,4,3,3,3,2,1]$ and its bounce
  path. These are visualised in \Cref{exampleDyckBounce}. From there,
  we can determine that the bounce count is $b_D=4,$ and thus,
  $\gldim(A)=2\cdot4=8.$
\end{example}
\begin{example}
  Consider the Dyck path~$D$ with area sequence $[3,4,4,3,2,1]$.  As
  demonstrated in \Cref{example2}, $b_D=2$. \Cref{gldim_formula}
  shows that $D$ corresponds to a sincere Nakayama algebra $A_D$ with
  a unique projective module of dimension $6$.  Thus, $A_D$ must have
  Kupisch series $[6,8,9,9,8,7]$.  Computing the minimal projective
  resolutions of the simple modules, we find that $S(0)$ has the
  longest resolution:
  \[
    0\rightarrow P(0) \rightarrow P(3) \rightarrow P(0) \rightarrow
    P(1) \rightarrow P(0) \rightarrow S(0)\rightarrow 0.
  \]
  Thus, the global dimension of $A_D$ is indeed $2b_D = 4$.
\end{example}

\end{document}